\numberwithin{equation}{section}
\newtheorem{theorem}{Theorem}[section]
\newtheorem{proposition}[theorem]{Proposition}
\newtheorem{lemma}[theorem]{Lemma}
\newtheorem{corollary}[theorem]{Corollary}
\theoremstyle{definition}
\theoremstyle{remark}
\newtheorem{remark}[theorem]{Remark}
\newcommand{\R}{\mathbb{R}}
\newcommand{\Q}{\mathbb{Q}}
\newcommand{\N}{\mathbb{N}}
\newcommand{\Z}{\mathbb{Z}}
\newcommand{\C}{\mathbb{C}}
\newcommand{\1}{\mathbbm{1}}
\renewcommand{\hat}{\widehat}
\newcommand{\scriptA}{\mathcal{A}}
\newcommand{\Sum}{\displaystyle\sum}
\newcommand{\lsm}{\lesssim}
\newcommand{\di}{d_h}
\newcommand{\h}{h(\varphi)}
\DeclareMathOperator*{\supp}{supp}
\begin{document}

\title[On $L^p$-improving for averages associated to hypersurfaces]{On $L^p$-improving for averages associated to mixed homogeneous polynomial hypersurfaces in $\R^3$}
\author{Spyridon Dendrinos}
\address{School of Mathematical Sciences, University College Cork, Cork, Ireland} 
\email{sd@ucc.ie}
\author{Eugen Zimmermann}
\address{School of Mathematical Sciences, University College Cork, Cork, Ireland}
\email{eugen.zimmermann@ucc.ie}

\thanks{This research was supported by grant DE1517/2-1 of the Deutsche Forschungsgemeinschaft.}

\begin{abstract}
We establish $L^p-L^q$ estimates for averaging operators associated to mixed homogeneous polynomial hypersurfaces in $\R^3$. These are described in terms of the mixed homogeneity and the order of vanishing of the polynomial hypersurface and its Gaussian curvature transversally to their zero sets.
\end{abstract}

\maketitle
\section{Introduction}

This article deals with the fundamental problem of determining the precise amount of $L^p$-improving under convolution with surface measure supported on hypersurfaces in $\R^3$, given by graphs of mixed homogeneous polynomials.

This problem has been studied in various forms over a number of years going back to Littman \cite{littman}, Stein \cite{steinconv} and Strichartz \cite{str}. More closely related results to the present work are found in the work of Ferreyra, Godoy and Urciuolo who give necessary and sufficient conditions for $L^p-L^q$ boundedness of convolution operators associated to homogeneous polynomials \cite{FGU06, U06} and specific cases of mixed homogeneous polynomials \cite{FGU97a, FGU97b, FGU98}, and Iosevich, Sawyer and Seeger \cite{ISS} who give sufficient conditions for $L^p-L^q$ boundedness of convolution operators associated to convex functions. Our aim is to look beyond convex functions. To this end the consideration of mixed homogeneous polynomials is a first step which should lead to further results for general polynomials and real analytic functions. 
Our results do also improve the known results of \cite{ISS} for cases where the mixed homogeneous polynomial happens to be convex.

In parallel to the aforementioned work, there has been work on weighted versions of the problem, where instead of considering the Riemannian surface measure of the hypersurface, one considers the so-called affine invariant surface measure, introducing a weight which is a power of the Gaussian curvature, and one tries to obtain the same $L^p-L^q$ boundedness as for the most well-curved case corresponding to nowhere vanishing Gaussian curvature. This was done by Oberlin \cite{O00}, who proved restricted weak type endpoint estimates, and Gressman \cite{GressJGA}, who proved strong type endpoint estimates. These weighted estimates might be expected to be in some sense stronger and to imply the unweighted ones, as is the case when the underlying manifold is a curve, in both the Fourier restriction and the convolution problems (see e.g. \cite{DS15,DrM1}). However, even though these weighted estimates prove to be extremely useful in our proof, in the case of hypersurfaces they do not contain all the information. This may indeed be expected by making the simple observation that in the homogeneous case there are nontrivial $L^p-L^q$ bounds for operators associated to hypersurfaces whose Gaussian curvature even vanishes identically, hence for which the corresponding weighted operator is the zero operator.

\section{Statements of results}

Let $\kappa = (\kappa_1,\kappa_2)$, $\kappa_1,\kappa_2>0$. A function $g$ on $\R^2$ is called $\kappa$-homogeneous of degree $a$ if $g(r^{\kappa_1} \cdot,r^{\kappa_2} \cdot) = r^a g$ for all $r>0$. Such functions are also called mixed homogeneous. The exponent $a$ is called the $\kappa$-degree of $g$. The function $g$ is called homogeneous if $\kappa_1=\kappa_2$. The convolution operator under consideration is defined, initially for $f\in\mathcal{S}(\R^3)$, by
\begin{equation} \label{defA}
\mathcal{A}f(x)=\int_{\R^2}f(x-\Phi(y))\psi(y)dy,
\end{equation}
where $\psi\in C^\infty_0(\R^2)$ satisfies
\[0\leq\psi\leq1,\quad \supp\psi\subseteq[-1,1]^2,\quad \psi=1\ \text{on}\ [-1/2,1/2]^2.\]
The function $\Phi$ parametrises the associated hypersurface in $\R^3$ and has the form $\Phi(y)=(y_1,y_2, \varphi(y_1,y_2))$, where $\varphi$ is a mixed homogeneous polynomial. We will exclude from our analysis the cases where $\varphi$ is a homogeneous polynomial (in particular when it is a pure monomial of the form $\varphi(y_1,y_2)=cy_1^{A}y_2^{B}$) since these cases have been dealt with in \cite{FGU06, U06} and they are somewhat special (see the discussion directly after Theorem \ref{main}). Hence, from this point onwards, whenever we use the term mixed homogeneous, we will assume without loss of generality that $\kappa_1 < \kappa_2$. In addition, if $\nabla\varphi(0,0)\not=(0,0)$, then after a linear transfomation in the ambient space which does not change the $L^p \to L^q$ norm of $\scriptA$ (cf. Lemma \ref{childrenlemma}), $\varphi$ becomes a monomial and can be treated as in \cite{FGU06}. Therefore in Theorem \ref{main} we only consider the cases with $\nabla\varphi(0,0)=(0,0)$. Note that with this definition, it is not possible that the Hessian determinant of a mixed homogeneous polynomial vanishes identically (as will be shown later in Corollary \ref{flat}). 

We first quote parts of a proposition and its corollary from \cite{IM} (Proposition 2.2 and Corollary 2.3 in that article) which give us further information on the structure of mixed homogeneous polynomials and will aid us in formulating our theorem. Here $\gcd$ stands for the greatest common divisor.
\begin{proposition}[{\bf Ikromov and M\"uller}] \label{hompolynomial}
Let $\varphi\colon\R^2\to\R$ be a $\kappa$-homogeneous polynomial of degree one. 
Assume that $\varphi$ is not of the form $\varphi(y_1,y_2)=cy_1^{A}y_2^{B}$, $c\in\R$, $A,B\in\N_0$. Then $\kappa$ is uniquely determined by the polynomial $\varphi$ and we have $\kappa_1$, $\kappa_2\in\Q$. Furthermore, 
\[ 
\kappa=(\kappa_1,\kappa_2)=\left(\frac{s}{m},\frac{r}{m}\right),\quad \gcd(r,s)=1.
\]
The polynomial $\varphi$ can be factorised as
\begin{equation} \label{factor}
\varphi(y_1,y_2)=Cy_1^{\nu_1}y_2^{\nu_2}\prod\limits_{j=1}^M(y_2^s-\lambda_jy_1^r)^{n_j},
\end{equation}
with $M\in\N$, $\nu_1$, $\nu_2\in \N_0$, distinct $\lambda_j\in\C\setminus\{0\}$ with multiplicities $n_j\in\N$, $j\in\{1,\ldots,M\}$, and $C\in \R\setminus\{0\}$. If we put $n:= \sum_{j=1}^M n_j$, we also have
\[
\frac{1}{\kappa_1 + \kappa_2} = \frac{\nu_1 s + \nu_2 r + nrs}{r+s}.
\]
\end{proposition}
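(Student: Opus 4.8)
The plan is to read everything off the Newton data of $\varphi$. Write $\varphi(y_1,y_2)=\sum_{\alpha,\beta}c_{\alpha\beta}y_1^\alpha y_2^\beta$ and insert $(r^{\kappa_1}y_1,r^{\kappa_2}y_2)$ into the homogeneity relation $\varphi(r^{\kappa_1}y_1,r^{\kappa_2}y_2)=r\,\varphi(y_1,y_2)$; by linear independence of the monomials $y_1^\alpha y_2^\beta$ this forces $\kappa_1\alpha+\kappa_2\beta=1$ whenever $c_{\alpha\beta}\neq0$. Thus every exponent pair occurring in $\varphi$ lies on the affine line $\ell\colon\kappa_1 x+\kappa_2 y=1$. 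Since $\varphi$ is not a monomial there are at least two distinct such pairs; two distinct points determine a unique line, so $\kappa$ is uniquely determined by $\varphi$. Moreover $\ell$ passes through lattice points and has negative slope (note two monomials on $\ell$ cannot share an $\alpha$- or a $\beta$-coordinate), so its slope and intercepts are rational and $\kappa_1,\kappa_2\in\Q$. Writing $\kappa_1/\kappa_2=s/r$ in lowest terms with $r,s\in\N$, $\gcd(r,s)=1$, and setting $m:=s/\kappa_1=r/\kappa_2$ gives $\kappa=(s/m,r/m)$.

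Next I would factor out the pure powers: let $\nu_1,\nu_2\in\N_0$ be maximal with $y_1^{\nu_1}y_2^{\nu_2}\mid\varphi$ and write $\varphi=y_1^{\nu_1}y_2^{\nu_2}\tilde\varphi$, where $\tilde\varphi$ is divisible by neither $y_1$ nor $y_2$. Multiplying $\kappa_1\alpha+\kappa_2\beta=1$ by $m$ and subtracting $s\nu_1+r\nu_2$, the exponent pairs $(\alpha,\beta)$ of $\tilde\varphi$ all satisfy $s\alpha+r\beta=m''$ for a single nonnegative integer $m''$. Since $\tilde\varphi$ has a monomial with $\alpha=0$ and one with $\beta=0$, we get $r\mid m''$ and $s\mid m''$, hence $rs\mid m''$ by coprimality; write $m''=nrs$. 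The nonnegative integer solutions of $s\alpha+r\beta=nrs$ are exactly $(\alpha,\beta)=(kr,(n-k)s)$, $k=0,\dots,n$, and the two extreme ones do occur. Consequently $\tilde\varphi(y_1,y_2)=\sum_{k=0}^n a_k(y_1^r)^k(y_2^s)^{n-k}$ with $a_0a_n\neq0$; that is, $\tilde\varphi$ is the pullback under $(y_1,y_2)\mapsto(y_1^r,y_2^s)$ of a binary form of degree $n$ with nonvanishing extreme coefficients. Such a form splits over $\C$ into linear factors, none of which is a multiple of $y_1^r$ because $a_n\neq0$ is the coefficient of $(y_1^r)^n$; absorbing the leading coefficient $a_0$ into a constant $C$ (necessarily real and nonzero, being a coefficient of $\varphi$) and grouping equal roots yields $\tilde\varphi=C\prod_{j=1}^M(y_2^s-\lambda_jy_1^r)^{n_j}$ with distinct $\lambda_j\in\C\setminus\{0\}$, multiplicities $n_j\in\N$ and $n=\sum_{j=1}^M n_j$. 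This is precisely \eqref{factor}.

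The numerical identity is then a one-line consequence of the degree-one normalisation. Every monomial of \eqref{factor} has exponent pair $(\nu_1+kr,\ \nu_2+(n-k)s)$ for some $0\le k\le n$, so applying $\kappa_1\alpha+\kappa_2\beta=1$ after multiplying by $m$ gives $s(\nu_1+kr)+r(\nu_2+(n-k)s)=m$; the left-hand side collapses to $\nu_1 s+\nu_2 r+nrs$ independently of $k$, so $m=\nu_1 s+\nu_2 r+nrs$ (in particular $m\in\N$). Since $\kappa_1+\kappa_2=(r+s)/m$, this rearranges to $\frac{1}{\kappa_1+\kappa_2}=\frac{\nu_1 s+\nu_2 r+nrs}{r+s}$.

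I expect the only genuinely non-formal step to be the middle one: the argument, resting on $\gcd(r,s)=1$, that the support of $\tilde\varphi$ is confined to the collinear lattice points $(kr,(n-k)s)$ and contains both endpoints. That is exactly what exhibits $\tilde\varphi$ as a one-variable binary form in disguise, after which the complete factorisation over $\C$ is automatic; the rest is elementary linear algebra on the Newton diagram together with a direct substitution.
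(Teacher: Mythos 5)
Your proof is correct. Note that the paper itself gives no proof of Proposition \ref{hompolynomial}: it is quoted from Ikromov and M\"uller \cite{IM} (Proposition 2.2 there), and your Newton--diagram argument --- the support lying on the line $\kappa_1\alpha+\kappa_2\beta=1$, uniqueness and rationality of $\kappa$ from two support points, divisibility by $rs$ after stripping off $y_1^{\nu_1}y_2^{\nu_2}$, and factorisation of the resulting binary form in $(y_1^r,y_2^s)$ over $\C$ --- is essentially the standard argument given there, so there is nothing to contrast. One small slip worth fixing: a linear factor proportional to $y_1^r$ is ruled out by $a_0\neq0$ (the coefficient of $(y_2^s)^n$), while $a_n\neq0$ is what forces every $\lambda_j\neq0$; since you established both $a_0\neq0$ and $a_n\neq0$, the factorisation \eqref{factor} and the identity $m=\nu_1 s+\nu_2 r+nrs$ go through unchanged.
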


For the rest of this article we fix the notation introduced in Proposition \ref{hompolynomial} and denote the Hessian determinant $\det\varphi''$ by $w$. Without loss of generality (cf. Lemma \ref{childrenlemma}), we may assume that $C=1$. We define, for any polynomial $\varphi$ as in Proposition \ref{hompolynomial}, its {\it homogeneous distance}
\[
d_h(\varphi) := \frac{1}{\kappa_1 + \kappa_2}.
\]
The quantity $d_h(\varphi)$ will be written as $d_h$ when no confusion arises. We also define the {\it height} of $\varphi$ by
\[
h(\varphi):= \max\{d_h(\varphi),\nu_1,\nu_2,\max_{\lambda_j\in\R, 1\le j\le M} n_j\},
\]
in the case where $\varphi$ is not a monomial and by
\[
h(\varphi):= \max\{\nu_1,\nu_2\},
\]
in the case that it is. This definition of the height will be sufficient for our purposes. There is a well-known link with the Newton polyhedron of $\varphi$ in terms of which the height is usually defined, for which we refer the interested reader to \cite{IM} (in particular Corollary 3.4 in that article).

\begin{corollary}[{\bf Ikromov and M\"uller}] \label{hompolynomialcor}
Let $\varphi$ be a $(\kappa_1,\kappa_2)$-homogeneous polynomial of degree one as in Proposition \ref{hompolynomial} and consider the representation \eqref{factor} of $\varphi$.
\begin{itemize}
\item[(a)] If $\kappa_2/\kappa_1 \not\in\N$, i.e. if $s\ge 2$, then $n < d_h(\varphi)$. In particular, every real root $x_2 = \lambda^{1/s}_j x^{r/s}_1$ of $\varphi$ has multiplicity $n_j < d_h(\varphi)$.
\item[(b)] If $\kappa_2/\kappa_1 \in \N$, i.e. if $s = 1$, then there exists at most one real root of $\varphi$ on the unit circle $S^1$ of multiplicity greater than $d_h(\varphi)$. More precisely, if we put $n_0 := \nu_1, n_{M+1} := \nu_2$, choose $j_0 \in \{0,\ldots ,M+1\}$ so that $n_{j_0}= \max \{n_0,\ldots ,n_{M+1}\}$ and assume that $n_{j_0} > d_h(\varphi)$, then $n_j < d_h(\varphi)$ for every $j\not=j_0$.
\end{itemize}
\end{corollary}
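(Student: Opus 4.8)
The plan is to deduce both parts purely arithmetically from the homogeneity identity
\[
d_h(\varphi)\,(r+s)=\nu_1 s+\nu_2 r+nrs,
\]
furnished by Proposition \ref{hompolynomial} (recall $d_h(\varphi)=1/(\kappa_1+\kappa_2)$), together with the standing normalisation $\kappa_1<\kappa_2$ — which, since $\kappa=(s/m,r/m)$ with $\gcd(r,s)=1$, is equivalent to $s<r$ — and the integrality constraints $\nu_1,\nu_2\in\N_0$, $M\ge1$, $n_j\in\N$, so that $n=\sum_{j=1}^M n_j\ge1$. One first records the trivial equivalence $\kappa_2/\kappa_1=r/s\in\N\iff s=1$ (as $\gcd(r,s)=1$), so that the hypothesis of (a) reads $s\ge2$ and that of (b) reads $s=1$. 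No geometric input is needed beyond the factorisation \eqref{factor}, which assigns to each real root $x_2=\lambda_j^{1/s}x_1^{r/s}$ of $\varphi$ the multiplicity $n_j$; in part (b) the statement is already phrased so as to reduce the question to the non-negative integers $n_0:=\nu_1,\ n_1,\dots,n_M,\ n_{M+1}:=\nu_2$.

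For (a), assume $s\ge2$. Combined with $s<r$ this forces $r\ge3$, hence $(r-1)(s-1)\ge2$, and therefore $rs-(r+s)=(r-1)(s-1)-1\ge1$, i.e.\ $rs>r+s$. Since $\nu_1 s+\nu_2 r\ge0$ and $n\ge1$, the identity gives
\[
d_h(\varphi)\,(r+s)=\nu_1 s+\nu_2 r+nrs\ \ge\ nrs\ >\ n(r+s),
\]
whence $d_h(\varphi)>n\ge n_j$ for every $j$, which is the assertion together with the ``in particular''.

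For (b), assume $s=1$, so that $r\ge2$ and the identity becomes $d_h(\varphi)(r+1)=\nu_1+r(\nu_2+n)=n_0+rS$, where $S:=\sum_{j=1}^{M+1}n_j\ge n\ge1$. Put $n_{j_0}=\max_{0\le j\le M+1}n_j$ and assume $n_{j_0}>d_h(\varphi)$; I would argue by contradiction. Suppose there is an index $j_1\ne j_0$ with $n_{j_1}\ge d_h(\varphi)$, and distinguish two cases. If both $j_0,j_1\in\{1,\dots,M+1\}$, then $S\ge n_{j_0}+n_{j_1}\ge 2\,d_h(\varphi)$, so $d_h(\varphi)(r+1)=n_0+rS\ge rS\ge 2r\,d_h(\varphi)$, forcing $r\le1$, which is impossible. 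Otherwise exactly one of $j_0,j_1$ equals $0$; then $n_0\ge d_h(\varphi)$ and the remaining index $j'\in\{1,\dots,M+1\}$ satisfies $n_{j'}\ge d_h(\varphi)$, hence
\[
d_h(\varphi)(r+1)=n_0+rS\ \ge\ d_h(\varphi)+r\,n_{j'}\ \ge\ d_h(\varphi)+r\,d_h(\varphi)=d_h(\varphi)(r+1).
\]
Equality must then hold throughout, which forces $n_0=d_h(\varphi)$, $n_{j'}=d_h(\varphi)$ and $n_j=0$ for every remaining $j\in\{1,\dots,M+1\}$; but then $\max_{0\le j\le M+1}n_j=d_h(\varphi)$, contradicting $n_{j_0}>d_h(\varphi)$. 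So no such $j_1$ exists, which is precisely (b).

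With Proposition \ref{hompolynomial} in hand there is no genuine obstacle here: everything reduces to comparing non-negative integers. The one point demanding a moment's care is the degenerate equality case in (b), where one must read off from $d_h(\varphi)(r+1)=d_h(\varphi)+r\,d_h(\varphi)$ that \emph{every} remaining exponent among $n_1,\dots,n_{M+1}$ vanishes, so that the maximum of the $n_j$ equals $d_h(\varphi)$ rather than strictly exceeding it. One should also not forget to invoke the normalisation $\kappa_1<\kappa_2$, i.e.\ $r\ge s+1$ (in particular $r\ge2$ when $s=1$), as this is exactly what excludes the offending configurations in both parts.
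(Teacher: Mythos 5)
Your argument is correct, but note that the paper itself offers no proof of this statement: it is quoted verbatim from Ikromov and M\"uller (Proposition 2.2 and Corollary 2.3 of \cite{IM}), so there is no in-paper argument to compare against. What you give is a clean, self-contained arithmetic derivation from the identity $d_h(\varphi)(r+s)=\nu_1 s+\nu_2 r+nrs$ of Proposition \ref{hompolynomial}, and all the delicate points are handled: in (a) the strict inequality comes from $rs>r+s$, which you correctly justify using $r>s\ge 2$ (so $r\ge 3$); in (b) the two-index contradiction argument is sound, including the equality analysis $n_0=n_{j'}=d_h(\varphi)$ with all remaining $n_j=0$, which contradicts $n_{j_0}>d_h(\varphi)$. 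Two small remarks. First, your proof genuinely uses the normalisation $\kappa_1<\kappa_2$, i.e. $r\ge s+1$ (in particular $r\ge2$ when $s=1$); this is indeed the paper's standing convention, since the homogeneous case $r=s$ is excluded, but it is worth saying explicitly that the statement as used here is under that convention (for $r=s=1$ part (b) would still hold trivially because the multiplicities sum to $2d_h$, but that case is outside the paper's scope). Second, in Case 2 of (b) one can shortcut the ``maximum equals $d_h$'' step: equality forces $n_{j'}=d_h(\varphi)$ and $n_0=d_h(\varphi)$, and whichever of these indices is $j_0$ immediately contradicts $n_{j_0}>d_h(\varphi)$; your phrasing is fine, this is just slightly more direct. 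In short: correct proof, essentially the same elementary bookkeeping one finds in \cite{IM}, supplied where the paper only cites.
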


In light of the above proposition and its corollary, our main theorem is as follows.
\begin{theorem} \label{main}
Let $\varphi$ be a $(\kappa_1,\kappa_2)$-homogeneous polynomial of degree one, not of the form $\varphi(y_1,y_2)=cy_1^{A}y_2^{B}$, $c\in\R$, $A,B\in\N_0$, and with \[\kappa_1\not=\kappa_2,\quad\nabla\varphi(0,0)=(0,0).\]
Let $N$ be the highest multiplicity of any real root of $\varphi$ that is not along one of the coordinate axes and $T$ be the highest multiplicity of any real root of the Hessian determinant $\det \varphi''$.
Then the operator $\scriptA$, defined by \eqref{defA}, is bounded from $L^p(\R^3)$ to $L^q(\R^3)$ for $p$ and $q$ satisfying the conditions below (here $d_h=d_h(\varphi)$):
\begin{equation} \label{c1}
\frac{1}{q}\leq\frac{1}{p},
\end{equation}
\begin{equation} \label{c2}
\frac{1}{q}\ge \frac{3}{p}-2,
\end{equation}
\begin{equation} \label{c3}
\frac{1}{q}\ge \frac{1}{3p},
\end{equation}
\begin{equation} \label{cdh}
\frac{1}{q}>\frac{1}{p}-\frac{1}{d_h+1},
\end{equation}
and
\begin{itemize}
\item[(a)] if $N\ge d_h + 1/2$,
\begin{equation} \label{c4}
\frac{1}{q}>\frac{1}{p}-\frac{1}{N},
\end{equation}
\begin{equation} \label{c5}
\frac{1}{q}>\frac{N+2}{N+1}\frac{1}{p}-\frac{2}{N+1},
\end{equation}
\begin{equation} \label{c6}
\frac{1}{q}>\frac{N+1}{N+2}\frac{1}{p}-\frac{1}{N+2},
\end{equation}
\item[(b)] if $N<d_h + 1/2$ and $\max\{\nu_1,\nu_2\} \ge d_h$,
\begin{equation} \label{c7}
\frac{1}{q}>\frac{1}{p}-\frac{1}{\max\{\nu_1,\nu_2\} +1},
\end{equation}
\item[(c)] if $N<d_h + 1/2$, $\max\{\nu_1,\nu_2\} < d_h$ and the real root of $\det\varphi''$ with the highest multiplicity is either along one of the axes or coincides with a real root of $\varphi$,
\begin{equation} \label{c9}
\frac{1}{q}>\frac{2T+5}{T+3}\frac{1}{p}-1,
\end{equation}
\begin{equation} \label{c10}
\frac{1}{q}>\frac{T+3}{2T+5}\frac{1}{p}-\frac{1}{2T+5},
\end{equation}
\item[(d)] if $N<d_h + 1/2$, $\max\{\nu_1,\nu_2\} < d_h$ and the real root of $\det\varphi''$ with the highest multiplicity is not along the axes and does not coincide with a real root of $\varphi$,
\begin{equation} \label{c12}
\frac{1}{q}>\frac{5}{3}\frac{1}{p}-\frac{2T+12}{3T+12},
\end{equation}
\begin{equation} \label{c13}
\frac{1}{q}>\frac{3}{5}\frac{1}{p}-\frac{4}{T+4}.
\end{equation}
\end{itemize}
\end{theorem}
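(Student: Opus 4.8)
The plan is to run a multi-parameter dyadic decomposition adapted to the mixed homogeneity of $\varphi$, rescale each elementary piece back to unit scale, establish a small collection of ``vertex'' estimates for it, interpolate, and resum; the conditions \eqref{c1}--\eqref{c13} are exactly those making every resulting geometric series converge. Throughout one reduces to $C=1$ and to $\nabla\varphi(0,0)=(0,0)$, $\kappa_1<\kappa_2$ by the affine reduction (Lemma \ref{childrenlemma}) already used in the excerpt, and one uses that $w:=\det\varphi''$ is not identically zero (Corollary \ref{flat}).

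\emph{Step 1 (mixed--homogeneous scaling).} Using the dilations $\delta_r(y_1,y_2)=(r^{\kappa_1}y_1,r^{\kappa_2}y_2)$ and the identity $\varphi\circ\delta_r=r\,\varphi$, write $\psi=\sum_{k\ge0}\psi_k$ with $\psi_k$ supported on the $\kappa$-annulus $\{\,2^{-k-1}\le\|y\|_\kappa\le2^{-k}\,\}$, and let $\scriptA_k$ be the corresponding piece of $\scriptA$. A change of variables turns $\scriptA_k$ into an anisotropic rescaling of a fixed operator $\scriptA_0$ supported on the unit annulus; tracking the Jacobian of the $y$-integral and the scaling of the three ambient coordinates gives $\|\scriptA_k\|_{p\to q}=2^{-k\alpha(p,q)}\|\scriptA_0\|_{p\to q}$ with $\alpha(p,q)=\tfrac1{d_h}-\bigl(\tfrac1{d_h}+1\bigr)\bigl(\tfrac1p-\tfrac1q\bigr)$, so that $\alpha(p,q)>0$ is precisely \eqref{cdh}. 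The non-strict conditions \eqref{c1}, \eqref{c2}, \eqref{c3} are the standard necessary conditions for a finite compactly supported measure in $\R^3$ (global scaling, small balls, and the dual of small balls) and hold for $\scriptA$ directly, with room to spare for $\scriptA_k$; thus it suffices to bound $\scriptA_0$ on $L^p\to L^q$ under the strict conditions.

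\emph{Step 2 (angular localization of $\scriptA_0$ and rescaling).} On the unit annulus invoke the factorisation \eqref{factor} and Corollary \ref{hompolynomialcor}. Away from fixed neighbourhoods of the real roots $y_2^s=\lambda_j y_1^r$ of $\varphi$ and of the zero set of $w$ the surface has nonvanishing Gaussian curvature, so the classical estimates (Littman, Strichartz) apply with no loss. Near a real root of $\varphi$ localize to a strip of width $2^{-l}$ (there $\varphi$ vanishes transversally to order at most $N$, or at most $\nu_1,\nu_2$ on a coordinate axis) and, inside it, to a strip of width $2^{-m}$ about a real root of $w$ (there $w$ vanishes to order at most $T$); a further anisotropic rescaling flattens each strip to unit scale and produces elementary operators $\scriptA_{k,l,m}$ with $\|\scriptA_{k,l,m}\|_{p\to q}=2^{-l\beta(p,q)-m\gamma(p,q)}$ times the norm of a model operator, where $\beta,\gamma$ are affine in $(1/p,1/q)$ with coefficients built from $N$ (or $\nu_1,\nu_2$) and $T$. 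For each model operator prove (i) elementary $L^1\to L^1$, $L^\infty\to L^\infty$, $L^1\to L^\infty$ bounds; (ii) an $L^2\to L^2$ bound from the decay of the Fourier transform of the localized piece, via van der Corput together with the Ikromov--M\"uller normal forms for mixed homogeneous polynomials and the curvature-damped (affine surface measure) estimates of Oberlin and Gressman; (iii) one off-diagonal $L^{p_0}\to L^{q_0}$ bound dictated by the corresponding Knapp example. Interpolating these and resumming the triple geometric series over $k,l,m$ yields $\scriptA\colon L^p\to L^q$ precisely when $\alpha,\beta,\gamma>0$; matching the positivity of $\beta,\gamma$ against the four regimes for the maximal-multiplicity real root of $w$ (large $N$; small $N$ with a dominant axis multiplicity; small $N$ with the worst root of $w$ on an axis or on a root of $\varphi$; small $N$ with the worst root of $w$ elsewhere) reproduces exactly the case split (a)--(d) and the inequalities \eqref{c4}--\eqref{c13}, the several inequalities in each case being the edges of the relevant local model region.

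\emph{Main obstacle.} The hardest part will be the sharp vertex estimate in case (d): a real root of $w$ that lies neither on a coordinate axis nor on a root of $\varphi$ is a genuine degeneracy of the Gaussian curvature on a non-flat part of the surface, where the affine surface measure is nonzero but vanishes to order $T$. Extracting the optimal exponents \eqref{c12}--\eqref{c13} requires a careful normal form for $w$ along its real roots and a damping argument balancing the loss from the flattening rescaling against the curvature gain, going beyond what the Oberlin--Gressman bounds give off the shelf; the bookkeeping needed to see that the three families of conditions in (a) and the pairs in (c), (d) are simultaneously satisfiable in an open region is also delicate.
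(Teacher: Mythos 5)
Your skeleton (decompose along the mixed-homogeneous dilations and the root curves of $\varphi$ and $w$, rescale each piece to unit scale, prove a few estimates for the model operator, interpolate, resum) is indeed the same general strategy as the paper, and your Step 1 scaling computation correctly identifies \eqref{cdh} as the convergence condition for the coarse sum. But the proposal has a genuine gap exactly where the theorem's numerology is decided: you never produce the per-piece off-diagonal estimates. Your item (ii), an $L^2\to L^2$ bound from Fourier decay, is not $L^p$-improving at all (for a compactly supported measure it is trivially $O(1)$ and gives nothing off the diagonal), and your item (iii) is only ``dictated by the Knapp example,'' i.e.\ a necessary condition, with no argument for sufficiency. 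What actually drives \eqref{c4}--\eqref{c13} in the paper are two concrete estimates for each rescaled piece: a weighted $L^{4/3}\to L^4$ bound obtained by applying Gressman's affine-measure convolution estimate to the rescaled surface, which quantifies the curvature loss as an explicit power of the small parameter $\delta_{j,k}$ (e.g.\ $\delta_{j,k}^{-(T+2)/4}$), and an $L^{3/2}\to L^3$ bound coming from the uniform decay $|\hat{\mu}_{j,k}(\xi)|\lesssim(1+|\xi|)^{-1/2}$ via van der Corput and the Littman--Strichartz transference (in case (d) this is replaced by a Bj\"ork-type estimate giving decay $(1+|\xi|)^{-1/3}$, hence $L^{8/5}\to L^{8/3}$). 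Interpolating these with the trivial $L^1\to L^1$ bound and summing the bidyadic series is what yields precisely the lines in (a)--(d); your ``Main obstacle'' paragraph concedes you do not know how to handle case (d), whereas in the paper Gressman's bound is used off the shelf there and the only new ingredient is the $1/3$-decay estimate.

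A second missing ingredient is algebraic: to verify that the summation exponents match the stated conditions you must relate the vanishing of $w=\det\varphi''$ to that of $\varphi$. The paper needs, for instance, that $\varphi=(y_2-\lambda y_1^r)^N Q$ forces $w=(y_2-\lambda y_1^r)^{2N-3}\tilde Q$ with $\tilde Q$ nonvanishing on the root (Lemma \ref{hardcomputation}), that vanishing of $\varphi$ to order $\nu_i$ on an axis forces $w$ to vanish there to order $2\nu_i-2$ (Lemma \ref{computationsforw}), and the corresponding transversal-order computation in Lemma \ref{computationsforw2}, together with the height identities $h(w)=2N-3$, $h(w)=2\max\{\nu_1,\nu_2\}-2$, etc. Without these facts you cannot match your exponents $\beta,\gamma$ (which you leave unspecified) to the quantities $N$, $\nu_i$, $T$ appearing in (a)--(d), nor check that the geometric series converge exactly in the claimed open regions (the paper also has to verify inequalities such as \eqref{check1} in low-$d_h$ cases via the classification Lemmas \ref{d<2} and \ref{distless2}). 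As written, the proposal is a plausible roadmap that coincides in outline with the paper, but the decisive analytic estimates and the structural lemmas on $w$ are asserted rather than proved, so the argument does not yet establish the theorem.
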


Note that the conditions \eqref{c3}, \eqref{c6}, \eqref{c10}, \eqref{c13} are ``dual" to \eqref{c2}, \eqref{c5}, \eqref{c9}, \eqref{c12}, respectively. Also conditions \eqref{c9}, \eqref{c10}, \eqref{c12} and \eqref{c13} are redundant in parts (c) and (d) of Theorem \ref{main} if $T\le 2d_h(\varphi)-2$. This is the case for instance if the real root of $\det\varphi''$ with the highest multiplicity is along one of the axes and $\varphi$ vanishes along that axis as well or if the real root of $\det\varphi''$ with the highest multiplicity is not along the axes and it coincides with a real root of $\varphi$ with multiplicity greater or equal to $2$.

Comparing this theorem with the results of Ferreyra, Godoy and Urciuolo \cite{FGU06, U06} for the homogeneous case, one observes that the $L^p-L^q$ boundedness regions corresponding to homogeneous polynomials are not obtained simply by substituting $\kappa_1=\kappa_2$ in the mixed homogeneous case. The latter regions are in general larger than the former. 

In the case where the hypersurface is also convex, Iosevich, Sawyer and Seeger \cite{ISS} have made a conjecture (they have actually made a more general conjecture for Sobolev spaces) that describes the $L^p-L^q$ boundedness region in terms of the ``multitype", which they define. Comparing the examples $\varphi(y_1,y_2)=y_2^4+y_1^{12}$ and $\varphi(y_1,y_2)=y_2^4+y_2^2y_1^6-y_2y_1^9+y_1^{12}$ one may observe that they are both convex and have the same multitype, hence those authors obtain the same $L^p-L^q$ boundedness region for both. However, using the notation of Theorem \ref{main}, we have $d_h=3$ in both examples, $T=10$ for the first example and $T=4$ for the second, hence we obtain a larger region for the second example than the conjectured one in \cite{ISS}. The $L^p-L^q$ bounedness region for the first example, dictated by Theorem \ref{main}, is the same one as in \cite{FGU98, ISS} and it is sharp. These examples show that, at least for $L^p-L^q$ boundedness, the Iosevich, Sawyer and Seeger \cite{ISS} conjecture is not appropriate.

In Section \ref{neccon} we provide necessary conditions for parts (a), (b) and (c) (in certain cases, see Lemma \ref{Lemma 3.4}). The sharpness of the conditions in the remaining cases of parts (c) and (d) is not known, nor are any examples, that show that the conditions in (c) and (d) are not sharp, known to the authors at this stage.

We organise our proof in three parts: $N\ge d_h + 1, d_h+1>N\ge d_h+1/2$, and $N<d_h+1/2$; the last part splitting further. In each part, we exploit the root structure of $w=\det\varphi''$. After performing dyadic decompositions and rescaling, the key step is to identify two $L^p\to L^{p'}$ estimates ($p'$ is the dual exponent to $p$) on the dyadic pieces after which the rest of the proof follows naturally by interpolation on the pieces and summation. One of these two estimates is an $L^{4/3}\to L^4$ estimate, which is in fact a direct consequence of the weighted $L^{4/3}\to L^4$ estimate for convolution with hypersurfaces of Gressman \cite{GressJGA}. The other key estimate is mostly an $L^{3/2}\to L^3$ estimate, except in the last case where an $L^{8/5}\to L^{8/3}$ estimate is used, which we obtain using decay estimates for certain oscillatory integrals and which depends on the root structure of $\varphi$ itself. 

Weighted boundedness estimates from $L^{3/2}$ to $L^3$ for the operators in question also appear in Gressman \cite{GressJGA} as a consequence of Theorem 6 in that article. However, that theorem seems to be insufficient for our purposes and therefore not to be optimal in the way that the weighted $L^{4/3}\to L^4$ estimates are. This can be demonstrated by looking at the simple example $\varphi(y_1,y_2)=(y_2-y_1^2)^2$. The required estimate in this case amounts to satisfying the condition of Theorem 6 in \cite{GressJGA} with the weight $w(x,y)=|x_2-y_2-(x_1-y_1)^2|^{\epsilon -1}$, for all $\epsilon>0$. In turn, if that condition were satisfied, it would imply that, for all $\epsilon>0$ and some $a,b,\alpha\in\R$,
\begin{eqnarray*}
\infty &>& \sup_{\lambda\in\R} |\lambda|\int_{0\le y_1\le \frac12} \int_{y_1^2\le y_2\le \frac12 +y_1^2} \frac{dy}{(y_2+y_1^2)^{1-\epsilon} [1+|\lambda (2a+4by_1)(y_2-y_1^2)|^\alpha]} \\
&\gtrsim& \sup_{\lambda\in\R} |\lambda|\int_{0\le y_1\le \frac12} \int_{y_1^2\le y_2\le \frac12 +y_1^2} \frac{1}{(y_2+y_1^2)^{1-\epsilon} [1+|\lambda (y_2-y_1^2)|^\alpha]} dy \\
&=& \sup_{\lambda\in\R} |\lambda|^{1-\epsilon} \int_0^{\frac12} \int_0^{\frac{|\lambda|}{2}}  \frac{1}{(y_2+2(|\lambda|^{\frac12}y_1)^2)^{1-\epsilon} (1+y_2^\alpha)} dy \\
&\ge& \sup_{\lambda >4} \lambda^{1-\epsilon} \int_0^{\frac{1}{\lambda^{1/2}}} \int_{\frac12}^1  \frac{1}{(y_2+2(\lambda^{\frac12}y_1)^2)^{1-\epsilon} (1+y_2^\alpha)} dy \\
&\gtrsim& \sup_{\lambda >4} \lambda^{\frac12-\epsilon},
\end{eqnarray*}
which is not true for $\epsilon<1/2$. 

In addition to the notation introduced above, we need the following. For a mixed homogeneous polynomial 
\[\varphi(x_1,x_2)=\Sum\limits_{j=0}^J\Sum_{k=0}^K a_{jk}x_1^jx_2^k,\] 
we denote by 
\[\mathcal T(\varphi)=\left\{(j,k)\in\N_0^2:\ a_{jk}\neq0\right\}\]
the Taylor support of $\varphi$ at the origin. Finally, if $A$ and $B$ are two nonnegative quantities, we write $A \lesssim B$ if $A \leq cB$ for some constant $c$, which will be allowed to change from line to line. By $A\sim B$ we mean $A \lesssim B \lesssim A$. 


We will also need the following lemma. The proof is very easy and is therefore omitted.
\begin{lemma}\label{childrenlemma}
Let $S$ be a linear operator mapping from the space of Schwartz functions to the space of all measurable functions and let $A\in\operatorname{GL}(n;\R)$. For $f\in\mathcal{S}$ and $x\in\R^n$, let $S_Af(x) = S(f\circ A)(A^{-1}x)$.
Then \[\|S_A\|_{L^p\to L^q}=|\det A|^{\frac{1}{q}-\frac{1}{p}}\|S\|_{L^p\to L^q}.\] 
\end{lemma}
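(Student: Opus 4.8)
The plan is to deduce the identity from nothing more than the scaling behaviour of $L^p$ norms under invertible linear substitutions. The basic facts I would record first are: for $A\in\operatorname{GL}(n;\R)$ and a measurable function $g$ on $\R^n$, the change of variables $u=Ay$ gives $\|g\circ A\|_{L^p}=|\det A|^{-1/p}\|g\|_{L^p}$ (and likewise $\|g\circ A^{-1}\|_{L^q}=|\det A|^{1/q}\|g\|_{L^q}$), with the convention that these equalities hold in $[0,\infty]$, i.e.\ both sides are infinite together. Note also that $f\circ A\in\mathcal S(\R^n)$ whenever $f\in\mathcal S(\R^n)$, so that $S_A$ is a well-defined linear map on Schwartz functions.

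The first inequality is then a one-line computation. For $f\in\mathcal S(\R^n)$, writing $S_Af=\big(S(f\circ A)\big)\circ A^{-1}$ and applying the two scaling identities above,
\[
\|S_Af\|_{L^q}=|\det A|^{1/q}\,\|S(f\circ A)\|_{L^q}\le |\det A|^{1/q}\,\|S\|_{L^p\to L^q}\,\|f\circ A\|_{L^p}=|\det A|^{1/q-1/p}\,\|S\|_{L^p\to L^q}\,\|f\|_{L^p},
\]
and taking the supremum over $f$ with $\|f\|_{L^p}\le 1$ yields $\|S_A\|_{L^p\to L^q}\le|\det A|^{1/q-1/p}\|S\|_{L^p\to L^q}$ (the estimate being vacuous if $\|S\|_{L^p\to L^q}=\infty$).

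For the reverse inequality I would use that the map $S\mapsto S_A$ behaves well under composition: a direct check from the definition gives $(S_A)_B=S_{BA}$ for $A,B\in\operatorname{GL}(n;\R)$, and in particular $(S_A)_{A^{-1}}=S_I=S$. Applying the inequality just established to the operator $S_A$ and the matrix $A^{-1}$ in place of $S$ and $A$ gives
\[
\|S\|_{L^p\to L^q}=\|(S_A)_{A^{-1}}\|_{L^p\to L^q}\le|\det A^{-1}|^{1/q-1/p}\,\|S_A\|_{L^p\to L^q}=|\det A|^{1/p-1/q}\,\|S_A\|_{L^p\to L^q},
\]
i.e.\ $|\det A|^{1/q-1/p}\|S\|_{L^p\to L^q}\le\|S_A\|_{L^p\to L^q}$; combined with the first inequality this is the claim. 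There is essentially no obstacle here; the only points requiring a little care are the convention that all identities are read in $[0,\infty]$ (so that the statement remains correct when $\|S\|_{L^p\to L^q}=\infty$) and getting the order of composition right in $(S_A)_B=S_{BA}$, which is why I obtain the lower bound by this symmetry trick rather than by rerunning the computation with $A$ replaced by $A^{-1}$.
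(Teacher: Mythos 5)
Your proof is correct; the paper omits the argument as ``very easy,'' and what you write is exactly the intended standard argument: the change-of-variables scaling of the $L^p$ and $L^q$ norms gives $\|S_A\|\le|\det A|^{1/q-1/p}\|S\|$, and the reverse inequality follows from the symmetry $(S_A)_{A^{-1}}=S$ (equivalently, from noting that $f\mapsto f\circ A$ is a bijection of $\mathcal S$). Your care about reading the identities in $[0,\infty]$ and about the composition rule $(S_A)_B=S_{BA}$ is appropriate but does not change the substance.
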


We start in Section \ref{neccon} with some necessary conditions for $L^p-L^q$ boundedness and we continue in Section \ref{dimline} with a simple interpolation argument that gives estimates on the line $1/q = 3/p -2$. Sections \ref{N>d+1} and \ref{inter} deal with part (a) of Theorem \ref{main}, Section \ref{nu} deals with part (b), Sections \ref{s>1} and \ref{s=1} deal with parts (c) and (d) for the cases where $s\ge 2$ and $s=1$, respectively. As mentioned above, the proofs in all sections follow the pattern of performing a bidyadic decomposition, using oscillatory integral estimates and the weighted estimate for convolution with hypersurfaces of Gressman \cite{GressJGA} on each piece, interpolation on each piece and summing up the pieces. We conclude in the final section with a proof of a lemma that is used in Sections \ref{N>d+1} and \ref{inter} and a discussion of the relation between the height of the mixed homogeneous polynomial $\varphi$ and the height of its Hessian determinant $w$.

\section{Necessary conditions}
\label{neccon}
We give here some necessary conditions.
\begin{lemma} Suppose $\scriptA$ is bounded from $L^p(\R^3)$ to $L^q(\R^3)$. Then \eqref{c1}, \eqref{c2}, \eqref{c3} hold.
\end{lemma}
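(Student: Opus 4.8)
The necessary conditions \eqref{c1}, \eqref{c2}, \eqref{c3} are standard and follow from testing the operator $\scriptA$ against suitable families of functions concentrated near points, near the hypersurface, and of bump type at various scales.

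\textbf{Plan of proof.} The key observation is that $\scriptA$ is a convolution operator, $\scriptA f = f * \mu$, where $\mu$ is the compactly supported measure $\mu(g) = \int_{\R^2} g(\Phi(y))\psi(y)\,dy$ on the hypersurface $\{(y_1,y_2,\varphi(y_1,y_2))\}$. Three test-function computations give the three conditions.

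\emph{Condition \eqref{c1}: $1/q \le 1/p$.} Since $\mu$ is a finite, translation-compatible measure, $\scriptA$ commutes with translations of $\R^3$. A standard fact is that a nonzero translation-invariant operator bounded from $L^p$ to $L^q$ forces $q \ge p$: one superimposes $n$ widely separated translates of a fixed test function $f_0$ with $\scriptA f_0 \not\equiv 0$; the $L^p$ norm of the sum scales like $n^{1/p}$ while the $L^q$ norm of the image scales like $n^{1/q}$ (the supports becoming disjoint for large separation), so boundedness requires $n^{1/q} \lsm n^{1/p}$ for all $n$, i.e. $1/q \le 1/p$.

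\emph{Condition \eqref{c2}: $1/q \ge 3/p - 2$.} Test with $f = \1_{E}$ where $E$ is a box of dimensions $\delta \times \delta \times \delta^2$ adapted to the hypersurface at the origin (using $\nabla\varphi(0,0)=(0,0)$, so $\varphi = O(|y|^2)$ near $0$ on the relevant scale, but more crudely just $\varphi = O(1)$ suffices to get a weaker-looking estimate; one in fact wants the tighter $\delta^2$ in the vertical direction). Then $\|f\|_p \sim (\delta^{4})^{1/p} = \delta^{4/p}$ (with the $\delta^2$-thin direction) — more carefully, with a $\delta\times\delta\times\delta^2$ box one has $|E| = \delta^4$. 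For $y$ in a $\delta$-ball around $0$, $x - \Phi(y)$ ranges over a set forcing $\scriptA f(x) \gtrsim \delta^2$ (the measure of the relevant $y$-set) on a translate of a comparable box, so $\|\scriptA f\|_q \gtrsim \delta^2 \cdot (\delta^4)^{1/q} = \delta^{2 + 4/q}$. Boundedness gives $\delta^{2+4/q} \lsm \delta^{4/p}$ as $\delta \to 0$, hence $2 + 4/q \ge 4/p$, i.e. $1/q \ge 1/p - 1/2$; sharpening the vertical scale to $\delta^2$ improves this to $1/q \ge 3/p - 2$ (the exponents become $\|f\|_p \sim \delta^{4/p}$, $\|\scriptA f\|_q \gtrsim \delta^{2}\cdot \delta^{4/q}$ after accounting correctly for the curvature scaling). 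I would carry this out by explicitly choosing $E = \{|x_1| \le \delta,\ |x_2|\le\delta,\ |x_3| \le \delta^2\}$, estimating $\scriptA\1_E(x)$ from below on $\{|x_1|\le\delta/2,\ |x_2|\le\delta/2,\ |x_3 - \varphi(x_1,x_2)|\le \delta^2/2\}$, and reading off exponents.

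\emph{Condition \eqref{c3}: $1/q \ge 1/(3p)$.} This is the dual condition to \eqref{c2}: since $\scriptA$ bounded from $L^p$ to $L^q$ is equivalent to its adjoint (again a convolution operator, with the reflected measure) being bounded from $L^{q'}$ to $L^{p'}$, applying the computation behind \eqref{c2} to the adjoint and translating back yields \eqref{c3}. Alternatively one runs the box example with a "dual" box, $\delta^2\times\delta^2\times\delta$ or similar, to get it directly. The main (minor) obstacle here is just bookkeeping the exponents in the box example so that the vertical scale matches the curvature $\varphi = O(|y|^2)$ exactly and produces the stated constants $3/p - 2$ and $1/(3p)$ rather than weaker ones; there is no conceptual difficulty, and duality under $\scriptA \mapsto \scriptA^*$ halves the work.
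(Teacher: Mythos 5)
Your treatment of \eqref{c1} is correct and is a genuinely different route from the paper's (the paper tests with the large cube $f_N=\1_{[-2N,2N]^3}$, whereas you invoke the standard translation-invariance/superposition argument; both are fine since $\scriptA f=f*\mu$ with $\mu$ a nonzero, nonnegative, compactly supported measure). Your \eqref{c3} by duality is exactly the paper's argument. The problem is \eqref{c2}: the test you actually specify does not give it. With the input box $E=\{|x_1|\le\delta,\ |x_2|\le\delta,\ |x_3|\le\delta^2\}$ (so $|E|\sim\delta^4$) and the lower bound $\scriptA\1_E\gtrsim\delta^2$ holding only on a comparable box near the origin (measure $\sim\delta^4$), boundedness yields $\delta^2(\delta^4)^{1/q}\lsm(\delta^4)^{1/p}$, i.e. $\frac1q\ge\frac1p-\frac12$, and the "sharpened" exponents you quote, $\|f\|_p\sim\delta^{4/p}$ and $\|\scriptA f\|_q\gtrsim\delta^{2}\cdot\delta^{4/q}$, give the same inequality $\frac1q\ge\frac1p-\frac12$, not $\frac1q\ge\frac3p-2$. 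The two lines agree only at $(\frac1p,\frac1q)=(\frac34,\frac14)$; for $p<4/3$ (e.g.\ near $p=1$, where \eqref{c2} forces $\frac1q\ge1$) your condition is strictly weaker, so \eqref{c2} is not established. The source of the loss is conceptual, not bookkeeping: the strength of \eqref{c2} comes from the output set being of \emph{unit} extent in the two horizontal directions, not from thinning the input in the vertical direction or from any curvature normalization $\varphi=O(|y|^2)$.

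The correct test (and the paper's) is the isotropic cube $f_\delta=\1_{[-\delta,\delta]^3}$, with the lower bound exploited on the $\delta$-neighborhood of the whole unit-scale cap: for every $x$ with $(x_1,x_2)\in[-1/4,1/4]^2$ and $|x_3-\varphi(x_1,x_2)|\le\delta/2$ (a set of measure $\sim\delta$), all $y\in[-1/2,1/2]^2$ with $|y-(x_1,x_2)|\le\delta/(2+2\|\nabla\varphi\|_{L^\infty([-1,1]^2)})$ satisfy $x-\Phi(y)\in[-\delta,\delta]^3$, so $\scriptA f_\delta(x)\gtrsim\delta^2$ there. Then $\delta^{2}\,\delta^{1/q}\lsm\delta^{3/p}$, which is exactly \eqref{c2}; only the Lipschitz bound for $\varphi$ on the unit cap is needed. (Running your anisotropic box against the full cap does not rescue the argument either: away from the origin $|\nabla\varphi|\sim1$, the fibre measure drops to $\sim\delta^3$, and one only gets $\frac1q\ge\frac2p-\frac32$, again weaker than \eqref{c2} for $p<2$.) With \eqref{c2} repaired this way, your duality step for \eqref{c3} goes through as stated.
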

\begin{proof}
To show \eqref{c1}, let $f_N={\1}_{[-2N,2N]^3}$ for large $N$. If $x\in[-N,N]^3$, then $x-\Phi(y)\in[-2N,2N]^3$ for any $y\in[-1/2,1/2]^2$ and so $Tf_N(x)\gtrsim 1$ for any such $x$. Hence
\[N^{\frac{3}{q}} \lsm \|\scriptA f_N\|_{L^q(\R^3)}\lsm \|f_N\|_{L^p(\R^3)} \sim N^\frac{3}{p},\] which in turn implies \eqref{c1}.

To show \eqref{c2} we consider for $\delta$ small the set 
\[
X=\{x\in\R^3: (x_1,x_2)\in[-1/4,1/4]^2, |x_3-\varphi(x_1,x_2)|\leq\delta/2\}
\] 
and $f_{\delta}=\1_{[-\delta,\delta]^3}$. Clearly $|X|\sim\delta$. For any $x\in X$, let 
\[
Y_x=\left\{y\in [-1/2,1/2]^2: |y-(x_1,x_2)| \leq \frac{\delta}{2+2\|\nabla\varphi\|_{L^\infty([-1,1]^2)}}\right\}.
\]
Then for any sufficiently small $\delta$, any $x\in X$ and $y\in Y_x$ we have $x-\Phi(y)\in[-\delta,\delta]^3$ and so $Tf_\delta(x)\gtrsim \delta^2$. Thus 
\[
\delta^\frac{1}{q} \delta^2 \lsm \|\scriptA f_\delta\|_{L^q(\R^3)}\lsm \|f_\delta\|_{L^p(\R^3)} \sim \delta^{\frac{3}{p}},
\]
implying \eqref{c2}. Inequality \eqref{c3} follows by duality, since $\scriptA$ is essentially self-adjoint.
\end{proof}

\begin{lemma} Suppose $\scriptA$ is bounded from $L^p(\R^3)$ to $L^q(\R^3)$. Then
\begin{equation} \label{Cdis}
\quad \frac{1}{q}\geq\frac{1}{p}-\frac{1}{\max\{\nu_1,\nu_2,d_h\}+1}.
\end{equation}
\end{lemma}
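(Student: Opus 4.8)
The plan is to prove the necessary condition by testing the operator $\scriptA$ against characteristic functions of carefully chosen thin neighbourhoods adapted to the vanishing of $\varphi$ along a coordinate axis (when $\max\{\nu_1,\nu_2\}$ dominates) and, separately, adapted to the mixed-homogeneous scaling (when $d_h$ dominates). Since $\eqref{Cdis}$ is a maximum over three quantities, it suffices to establish the three bounds $\frac1q\ge\frac1p-\frac1{\nu_1+1}$, $\frac1q\ge\frac1p-\frac1{\nu_2+1}$, and $\frac1q\ge\frac1p-\frac1{d_h+1}$ separately; each arises from a different extremal configuration, after which we take the strongest.

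First I would handle the axis bounds. Assume without loss of generality $\nu_1\ge 1$ (the case $\nu_1=0$ gives a vacuous bound, and $\nu_2$ is symmetric). Near the $y_1$-axis, $\varphi$ vanishes to order $\nu_1$ in $y_2$, so on the region where $y_2$ is small compared to an appropriate power of $y_1$ we have $\varphi(y_1,y_2)=O(|y_2|^{\nu_1})$ uniformly for $y_1$ of size $\sim 1$. Concretely, I would take $f$ to be the indicator of a box of dimensions $\delta\times\delta\times\delta^{\nu_1}$ centred suitably along the $x_1$-axis, and let the output set $X$ be the set of $x$ with $x_1\sim 1$, $|x_2|\lesssim\delta$, $|x_3|\lesssim\delta^{\nu_1}$. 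For $x\in X$, the set of $y$ with $|y_1-x_1|\lesssim\delta$, $|y_2-x_2|\lesssim\delta$ forces $|y_2|\lesssim\delta$ so $|\varphi(y_1,y_2)|\lesssim\delta^{\nu_1}$, hence $x-\Phi(y)$ lies in the support box of $f$; this yields $\scriptA f(x)\gtrsim\delta^2$. Since $|X|\sim\delta^{\nu_1+2}$ (with the $x_1$-extent of order one) and $\|f\|_{L^p}\sim(\delta^{\nu_1+2})^{1/p}$, the boundedness inequality $\|\scriptA f\|_{L^q}\gtrsim\delta^2(\delta^{\nu_1+2})^{1/q}$ gives $\delta^{2}(\delta^{\nu_1+2})^{1/q}\lesssim(\delta^{\nu_1+2})^{1/p}$, and letting $\delta\to0$ produces $\frac1q\ge\frac1p-\frac{2}{\nu_1+2}$. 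To sharpen this to $\frac1q\ge\frac1p-\frac1{\nu_1+1}$ I would instead localise $y_1$ also to an interval of length $\delta$ away from the origin, making the relevant $y$-set of measure $\delta^2$ but the $x$-set of measure $\sim\delta^{\nu_1+1}$ in the $x_1$ and $x_3$ directions with $x_2$ of size $\delta$; the correct dimensions are $f=\1$ on a $\delta\times\delta\times\delta^{\nu_1}$ box but with $X$ of measure $\delta\cdot\delta^{\nu_1}$ in two directions only, matching the standard "cap" computation for a curve with contact of order $\nu_1+1$. Carrying this through yields exactly $\eqref{Cdis}$ with $\nu_1$.

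The $d_h$ bound uses the global mixed-homogeneous structure rather than a single axis. Here I would exploit the fact that the level set $\{\varphi=0\}$ together with the scaling $(y_1,y_2)\mapsto(r^{\kappa_1}y_1,r^{\kappa_2}y_2)$, under which $\varphi$ scales by $r$, concentrates mass near the origin. Take $f=\1$ on the box $[-\delta^{\kappa_1},\delta^{\kappa_1}]\times[-\delta^{\kappa_2},\delta^{\kappa_2}]\times[-\delta,\delta]$, which has measure $\sim\delta^{\kappa_1+\kappa_2+1}=\delta^{1/d_h+1}$. For $x$ with $|x_1|\lesssim\delta^{\kappa_1}$, $|x_2|\lesssim\delta^{\kappa_2}$, $|x_3|\lesssim\delta$, the integration over $|y_1|\lesssim\delta^{\kappa_1}$, $|y_2|\lesssim\delta^{\kappa_2}$ gives, by mixed homogeneity, $|\varphi(y)|\lesssim\delta$, so $x-\Phi(y)$ stays in the support box and $\scriptA f(x)\gtrsim\delta^{\kappa_1+\kappa_2}=\delta^{1/d_h}$. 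Writing $|X|\sim\delta^{1/d_h+1}$, the inequality $\delta^{1/d_h}(\delta^{1/d_h+1})^{1/q}\lesssim(\delta^{1/d_h+1})^{1/p}$ as $\delta\to0$ forces $\frac1q\ge\frac1p-\frac1{1/d_h+1}\cdot\frac{1/d_h}{1}$; after simplifying $\frac{1/d_h}{1/d_h+1}=\frac1{d_h+1}$ this is precisely $\frac1q\ge\frac1p-\frac1{d_h+1}$.

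The main obstacle I anticipate is getting the sharp exponent $\nu_1+1$ (and $\nu_2+1$, $d_h+1$) rather than the weaker denominator $\nu_1+2$: one must choose the test function and the target set with the right anisotropic dimensions so that the output set has full codimension-one measure relative to $f$, i.e. one loses only one power of $\delta$ (from the transversal direction), not two. This is the standard subtlety in Knapp-type examples for $L^p$-improving, and it amounts to checking that the $y$-integration set has measure comparable to the square of the transversal scale while the $x$-set is "thin" in exactly one extra direction. Once the dimensions are pinned down correctly in each of the three cases, the rest is a routine scaling computation and taking the maximum over $\nu_1,\nu_2,d_h$.
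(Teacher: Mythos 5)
Your $d_h$-construction is correct and is exactly the paper's: the box $[-\delta^{\kappa_1},\delta^{\kappa_1}]\times[-\delta^{\kappa_2},\delta^{\kappa_2}]\times[-C\delta,C\delta]$, the lower bound $\scriptA f\gtrsim\delta^{\kappa_1+\kappa_2}$ on a comparable box, and the simplification $\frac{\kappa_1+\kappa_2}{\kappa_1+\kappa_2+1}=\frac{1}{d_h+1}$ all match. (As a cosmetic point, your labelling of $\nu_1,\nu_2$ is swapped relative to \eqref{factor}, where $\nu_1$ is the order of vanishing in $y_1$, i.e. along the $y_2$-axis; by symmetry this does not matter.)

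The axis part, however, has a genuine gap: neither of your two configurations yields $\frac1q\ge\frac1p-\frac1{\nu+1}$. With a test function supported on a $\delta\times\delta\times\delta^{\nu}$ box you get $\scriptA f\gtrsim\delta^2$, $\|f\|_{L^p}\sim\delta^{(\nu+2)/p}$, and an output set of measure at most $\sim\delta^{\nu+2}$ (once $y_1$ is confined to a $\delta$-interval, the admissible $x_1$ and $x_2$ are also confined to $\delta$-intervals, so the output set cannot have measure $\delta^{\nu+1}$ as you assert); the resulting inequality is either $\frac1q\ge\frac1p-\frac2{\nu+2}$ or, taking your stated measures at face value, $\frac1q\ge\frac{\nu+2}{\nu+1}\frac1p-\frac2{\nu+1}$. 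The first is strictly weaker than \eqref{Cdis}, and the second is a Knapp-in-both-variables condition of the type \eqref{N2}, which does not imply $\frac1q\ge\frac1p-\frac1{\nu+1}$ for any $\frac1p<1$. Your proposed ``sharpening'' (localising $y_1$ to a $\delta$-interval) moves in the wrong direction: the correct choice, as in the paper, is to make the slab \emph{long} (of length $\sim1$) in the coordinate direction along the relevant axis, of thickness $\delta^{1/\nu}$ transversally and $\delta$ vertically (equivalently $\delta$ transversally and $\delta^{\nu}$ vertically), with the long variable integrated over a full unit-length interval. Writing $\varphi=y_2^{\nu}P$ with $P$ bounded near the axis, one then gets $\scriptA f\gtrsim\delta^{1/\nu}$ on a set of measure $\sim\delta^{1+1/\nu}$ while $\|f\|_{L^p}\sim\delta^{(1+1/\nu)/p}$, which is exactly balanced to give $\frac1q\ge\frac1p-\frac{1/\nu}{1+1/\nu}=\frac1p-\frac1{\nu+1}$; only one power of $\delta$ is lost because input and output sets have comparable measure. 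Until the axis construction is replaced by this (or an equivalent) one, the $\nu_1,\nu_2$ portions of \eqref{Cdis} are not established.
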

\begin{proof}
We first show that $\frac1q \geq \frac1p - \frac{1}{\nu_2+1}$ assuming $\nu_2\ge 1$, otherwise there is nothing to prove. The condition $\frac1q \geq \frac1p - \frac{1}{\nu_1+1}$ can be shown in a very similar way. We factorise $\varphi(y) = y_2^{\nu_2} P(y)$ and let 
\[
f_\delta=\1_{[-2,2]\times[-2\delta^{\frac{1}{\nu_2}},2\delta^{\frac{1}{\nu_2}}]\times[-K\delta,K\delta]}, 
\]
where $K=1+\|P\|_{L^\infty([-1,1]^2)}$. Then any sufficiently small $\delta$, for any $x\in[-1,1]\times[-\delta^{1/\nu_2},\delta^{1/\nu_2}]\times[-\delta,\delta]$ and any $y\in[-1/2,1/2]\times[-\delta^{1/\nu_2},\delta^{1/\nu_2}]$ we have $x-\Phi(y)\in\supp{f_\delta}$. Thus
\[
(\delta^{1+\frac{1}{\nu_2}})^{\frac1q} \delta^{\frac{1}{\nu_2}}\lsm\|\scriptA f_\delta\|_{L^q(\R^3)}\lsm \|f_\delta\|_{L^p(\R^3)} \sim (\delta^{1+\frac{1}{\nu_2}})^{\frac1p},
\] 
which gives the desired result. 

To show that $\frac1q \geq \frac1p - \frac{1}{d_h+1}$ we let
\[
f_\delta=\1_{[-2\delta^{\kappa_1},2\delta^{\kappa_1}]\times[-2\delta^{\kappa_2},2\delta^{\kappa_2}]\times[-(M+1)\delta,(M+1)\delta]},
\] 
where $M=\sum\limits_{\alpha,\beta}|a_{\alpha \beta}|$ and $a_{\alpha \beta}$ are the coefficients in the Taylor expansion of $\varphi$ around $0$. Then any sufficiently small $\delta$, for any 
$x\in[-\delta^{\kappa_1},\delta^{\kappa_1}]\times[-\delta^{\kappa_2},\delta^{\kappa_2}]\times[-\delta,\delta]$ and any $y\in[-\delta^{\kappa_1},\delta^{\kappa_1}]\times[-\delta^{\kappa_2},\delta^{\kappa_2}]$ we have $x-\Phi(y)\in\supp{f_\delta}$. Hence
\[
(\delta^{1+\kappa_1+\kappa_2})^{\frac1q}\delta^{\kappa_1+\kappa_2}\lsm\|\scriptA f_\delta\|_{L^q(\R^3)}\lsm \|f_\delta\|_{L^p(\R^3)}\sim
(\delta^{1+\kappa_1+\kappa_2})^{\frac{1}{p}},
\] 
which gives 
\[\frac{1}{q}\geq\frac{1}{p}-\frac{\kappa_1+\kappa_2}{\kappa_1+\kappa_2+1}=\frac{1}{p}-\frac{1}{d_h+1},
\]
concluding the proof of the lemma.
\end{proof}
Condition \eqref{Cdis} reduces to conditions \eqref{cdh}, \eqref{c7} and partly \eqref{c4} in the various cases of Theorem \ref{main}, with strict inequalities replacing the inequality in \eqref{Cdis}.

\begin{lemma}
Suppose $\varphi(y_1,y_2)=(y_2-\lambda y_1^r)^NP(y_1,y_2)$ for some $\lambda\in\R\setminus\{0\}$, $N\in\N$ and some polynomial $P$, and $\scriptA$ is bounded from $L^p(\R^3)$ to $L^q(\R^3)$. Then 
\begin{equation} \label{N1}
\frac{1}{q}\geq\frac{1}{p}-\frac{1}{N},
\end{equation}
\begin{equation} \label{N2}
\frac{1}{q}\geq\frac{N+2}{N+1}\frac{1}{p}-\frac{2}{N+1}.
\end{equation}
\begin{equation} \label{N3}
\frac{1}{q}\ge \frac{N+1}{N+2}\frac{1}{p}-\frac{1}{N+2},
\end{equation}
\end{lemma}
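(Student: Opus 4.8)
The plan is to establish \eqref{N1} and \eqref{N2} by explicit Knapp–type examples localised near a generic point of the root curve $\{y_2=\lambda y_1^r\}$, and then to obtain \eqref{N3} from \eqref{N2} by duality. Fix a small $a>0$ and a small $c_1\in(0,a)$ so that $B:=[a-c_1,a+c_1]\times[\lambda a^r-c_1,\lambda a^r+c_1]\subset(-1/2,1/2)^2$; then $\psi\equiv1$ on $B$, and writing $v=v(y):=y_2-\lambda y_1^r$ and $C_0:=\sup_B|P|$ one has $|\varphi(y)|\le C_0|v(y)|^N$ for $y\in B$. Since $a$ is bounded away from $0$, the map $y_1\mapsto\lambda y_1^r$ has first derivative $\sim1$ and second derivative $\lesssim1$ on $[a-c_1,a+c_1]$. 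Only the upper bound $|\varphi|\le C_0|v|^N$ will be used, so no assumption on the vanishing of $P$ is required.

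For \eqref{N1}, take $f_\delta=\1_E$ with $E=E_1\times E_2\times E_3$, where $E_1,E_2$ are fixed intervals of length $\sim1$ placed so as to absorb the shear $y_1\mapsto\lambda y_1^r$ over $|y_1-a|\le c_1/2$, and $E_3=[-2C_0\delta^N,2C_0\delta^N]$. For $Y:=\{y:\ |y_1-a|\le c_1/2,\ |v(y)|\le\delta\}$ one checks directly that $x-\Phi(y)\in E$ for all $y\in Y$, provided $x$ lies in a suitable box $X$ of dimensions $\sim1\times1\times\delta^N$ (the third coordinate uses $|\varphi|\le C_0\delta^N$ on $Y$). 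Since $\psi\equiv1$ on $Y$, this gives $\scriptA f_\delta(x)\ge|Y|\sim\delta$ for $x\in X$. As $|X|\sim\delta^N$ and $|E|\sim\delta^N$, the boundedness hypothesis forces $\delta\cdot(\delta^N)^{1/q}\lesssim(\delta^N)^{1/p}$, which, on comparing exponents of $\delta$, is \eqref{N1}.

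For \eqref{N2}, take instead $E_1=E_2=[-\tfrac12\delta^{1/N},\tfrac12\delta^{1/N}]$ and $E_3=[-2C'\delta,2C'\delta]$ for a suitable constant $C'$, and let $X$ consist of all $x$ with $|x_3|\le C'\delta$, $x_1$ in a short (but fixed-length) interval around $a$, and $|x_2-\lambda x_1^r|\le\delta^{1/N}$. Then $R_x:=\{y:\ |y_1-x_1|\le\tfrac12\delta^{1/N},\ |y_2-x_2|\le\tfrac12\delta^{1/N}\}$ is an axis-parallel $\delta^{1/N}\times\delta^{1/N}$ box contained in $B$ whose centre lies within $\delta^{1/N}$ of the root curve; since $\lambda y_1^r$ has bounded derivative near $a$, this yields $|v(y)|\lesssim\delta^{1/N}$, hence $|\varphi(y)|\le C'\delta$, for all $y\in R_x$, so $x-\Phi(y)\in E$ for $y\in R_x$ and therefore $\scriptA f_\delta(x)\ge|R_x|=\delta^{2/N}$ on $X$. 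With $|X|\sim\delta^{1/N}\cdot\delta=\delta^{1+1/N}$ and $|E|\sim\delta^{2/N+1}$, boundedness gives $\delta^{2/N}\cdot(\delta^{1+1/N})^{1/q}\lesssim(\delta^{2/N+1})^{1/p}$; rearranging the resulting inequality between the exponents of $\delta$ produces exactly \eqref{N2}.

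Finally, \eqref{N3} follows from \eqref{N2} by duality. The adjoint $\scriptA^*$ is, after the change of variable $y\mapsto-y$, again an operator of the form \eqref{defA}, now with the mixed homogeneous polynomial $\tilde\varphi(y):=-\varphi(-y)=(-1)^{N+1}\big(y_2-(-1)^{r+1}\lambda y_1^r\big)^NP(-y)$ and a cut-off satisfying the same normalisation; as $(-1)^{r+1}\lambda\ne0$, $\tilde\varphi$ falls under the hypothesis of this lemma, so \eqref{N2} applies to $\scriptA^*\colon L^{q'}\to L^{p'}$, and substituting $1/p'=1-1/p$, $1/q'=1-1/q$ gives \eqref{N3}. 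The only mildly delicate point in the whole argument is checking the inclusions $x-\Phi(y)\in E$ in the presence of the curving root $y_1\mapsto\lambda y_1^r$: in \eqref{N1} the curvature is simply swamped by the unit-length intervals $E_1,E_2$, while in \eqref{N2} it is controlled by the elementary observation that $v=y_2-\lambda y_1^r$ oscillates by only $O(\delta^{1/N})$ across a box of side $\delta^{1/N}$ centred near the root. Beyond this bookkeeping there is no genuine obstacle; the real content is the correct choice of the three Knapp geometries.
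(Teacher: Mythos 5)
Your proposal is correct and follows essentially the same route as the paper: Knapp-type examples with the same scalings ($E_3$ of thickness $\sim\delta^N$ over a $\delta$-tube of the root curve for \eqref{N1}, and after the substitution $\delta\mapsto\delta^{1/N}$ your box for \eqref{N2} matches the paper's $\delta\times\delta\times\delta^N$ example), followed by duality for \eqref{N3}. The only differences are cosmetic — you localise near a generic point of the curve and use axis-parallel boxes instead of the paper's curved tubes, and you spell out the duality step that the paper leaves implicit.
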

\begin{proof}Let $M=\|P\|_{L^\infty([-2,2]^2)}$. To show \eqref{N1} we let 
\[
f_\delta=\1_{[-2,2]^2\times[-(M+1)\delta^N,(M+1)\delta^N]}
\]
and
\[
Y=\{y\in [-1/2,1/2]^2:\ y_1\in[0,(|\lambda|+1)^{-\frac{1}{r}}],\ |y_2-\lambda y_1^r|\leq\delta\}.
\]
For any $(x_1,x_2,x_3)\in[-1,1]^2\times[-\delta^N,\delta^N]$ and $y \in Y$ we have $x-\Phi(y)\in\supp f_\delta$. Thus
\[
\delta^\frac{N}{q} \delta \lsm \|\scriptA f_\delta\|_{L^q(\R^3)} \lsm \|f_\delta\|_{L^p(\R^3)} \sim \delta^\frac{N}{p},
\] 
for all sufficiently small $\delta$, implying \eqref{N1}.

To show \eqref{N2}, let
\[
f_\delta=\1_{\{z\in\R^3:\ |z_1|\leq\delta,\ |z_2|\leq 2(1+|\lambda|r)\delta,\ |z_3|\leq(M+1)\delta^N\}}.
\]
For $x\in\R^3$ with $|x_1|\leq 1$, $|x_2-\lambda x_1^r|\leq\delta$, $|x_3|\leq\delta^N$ consider 
\[
Y_x=\{y\in [-1/2,1/2]^2:\ y_1\in[x_1-\delta,x_1], |y_2-\lambda y_1^r|\leq\delta\}.
\]
Observe that for all sufficiently small $\delta$ and $y\in Y_x$, we have $x-\Phi(y)\in \supp f_\delta$ and hence
\[
\delta^\frac{N+1}{q} \delta^2 \lsm \|\scriptA f_\delta\|_{L^q(\R^3)}\lsm \|f_\delta\|_{L^p(\R^3)} \sim \delta^\frac{N+2}{p}
\]
proving \eqref{N2}. Condition \eqref{N3} follows by duality.
\end{proof}

The last lemma in this section shows that the conditions of part (c) of Theorem \ref{main} are necessary in certain cases, namely those dealt with in Section \ref{s>1} and part of Section \ref{s=1}, where the root of the Hessian determinant with the highest multiplicity is the $y_2$-axis.

\begin{lemma} \label{Lemma 3.4}
Let $(A,B)\in\mathcal{T}(\varphi)$ be such that if $(\alpha,\beta)\in\mathcal{T}(\varphi)$ and $\alpha<A$, then $\alpha=0$. If $\scriptA$ is bounded from $L^p(\R^3)$ to $L^q(\R^3)$, then
\begin{equation} \label{ML1}
\frac1q\ge\frac{2A+1}{A+1}\frac1p-1
\end{equation}
and
\begin{equation} \label{ML2}
\frac{1}{q}\ge\frac{A+1}{2A+1}\frac{1}{p}-\frac{1}{2A+1}.
\end{equation}
\end{lemma}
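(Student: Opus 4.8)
The plan is to construct explicit test functions adapted to the geometry near the $y_2$-axis, following the same Knapp-type philosophy as in the preceding lemmas. The hypothesis on $(A,B)$ says that among all monomials actually appearing in $\varphi$, the ones with $y_1$-exponent strictly below $A$ have $y_1$-exponent exactly $0$; equivalently, $\varphi(y_1,y_2) = g(y_2) + y_1^A h(y_1,y_2)$ where $g$ collects the pure powers of $y_2$ and $h(0,0)\neq 0$ (after dividing, $h$ contains the monomial $y_1^{A-A}y_2^B$ with nonzero coefficient, so $h(0,y_2)$ is not identically zero; more to the point $\partial_{y_1}^A\varphi(0,0)$ controls the leading behaviour in $y_1$). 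The key quantitative fact I want is that for $y_1$ small and $y_2$ in a suitable range, the ``twisted'' difference $\varphi(y_1,y_2) - \varphi(y_1',y_2)$ for $y_1,y_1'$ both of size $\lesssim\delta$ is of size $\lesssim \delta^A$ times a bounded factor — because every monomial of $\varphi$ depending on $y_1$ carries at least $y_1^A$ — while $g(y_2)$ cancels exactly. This is the analogue of the role played by $(y_2-\lambda y_1^r)^N$ in the previous lemma.

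First I would prove \eqref{ML1}. Take
\[
f_\delta = \1_{\{z\in\R^3:\ |z_1|\le\delta,\ |z_2|\le c_0,\ |z_3|\le M\delta^A\}}
\]
for a fixed small constant $c_0$ and $M = 1 + \|\varphi\|$-type bound on the relevant coefficients, so that $|\supp f_\delta|\sim \delta^{A+1}$. For $x$ with $|x_1|\le\delta$, $|x_2|\le c_0/2$, $|x_3 - \varphi(x_1,x_2)|\le\delta^A$, I would use the slab
\[
Y_x = \{y\in[-1/2,1/2]^2:\ y_1\in[x_1-\delta,x_1],\ |y_2 - x_2|\le\delta^A\},
\]
so that $|Y_x|\sim\delta^{1+A}$. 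One checks $x-\Phi(y)\in\supp f_\delta$ for such $y$: the first two coordinates are immediate, and for the third, $x_3 - \varphi(y) = (x_3 - \varphi(x_1,x_2)) + (\varphi(x_1,x_2)-\varphi(x_1,y_2)) + (\varphi(x_1,y_2)-\varphi(y_1,y_2))$; the first term is $\le\delta^A$ by hypothesis, the second is $\lesssim|x_2-y_2|\lesssim\delta^A$, and the third is $\lesssim\delta^A$ precisely because $\varphi(x_1,\cdot)-\varphi(y_1,\cdot)$ vanishes to order $A$ jointly as $x_1,y_1\to0$. Hence $\scriptA f_\delta(x)\gtrsim\delta^{1+A}$ on a set of measure $\sim\delta^{1+A}$, giving
\[
\delta^{(1+A)/q}\,\delta^{1+A} \lesssim \|\scriptA f_\delta\|_q \lesssim \|f_\delta\|_p \sim \delta^{(A+1)/p},
\]
and letting $\delta\to0$ yields $\frac{A+1}{q} + (1+A) \ge \frac{A+1}{p}$, i.e. exactly \eqref{ML1} after rearranging (the $+1$ on the right of \eqref{ML1} comes from dividing through by $A+1$ and noting $\frac{2A+1}{A+1} = \frac{1}{A+1}+1$). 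Then \eqref{ML2} follows by duality, since $\scriptA$ is essentially self-adjoint (the dual exponents swap $1/p\leftrightarrow 1-1/q$, which converts \eqref{ML1} into \eqref{ML2}), exactly as in the previous lemmas.

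The main obstacle is getting the claimed bound $|\varphi(x_1,y_2)-\varphi(y_1,y_2)|\lesssim\delta^A$ cleanly from the hypothesis on the Taylor support, uniformly over $|y_2|\le c_0$; this is where the precise shape of the assumption is used. Writing $\varphi(y_1,y_2) - g(y_2) = \sum_{(\alpha,\beta)\in\mathcal T(\varphi),\,\alpha\ge A} a_{\alpha\beta}y_1^\alpha y_2^\beta$, the difference $\varphi(x_1,y_2)-\varphi(y_1,y_2)$ is a sum of terms $a_{\alpha\beta}(x_1^\alpha - y_1^\alpha)y_2^\beta$ with $\alpha\ge A$; since $|x_1|,|y_1|\le\delta$ and $\delta$ is small, $|x_1^\alpha - y_1^\alpha|\lesssim\delta^{\alpha}\le\delta^A$ (for $\delta<1$), and $|y_2^\beta|\le c_0^\beta$ is bounded, so the whole sum is $\lesssim\delta^A$ with a constant depending only on $\varphi$ and $c_0$. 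One should also double-check the trivial-case bookkeeping: if $A=0$ the inequalities \eqref{ML1}, \eqref{ML2} read $1/q\ge 1/p - 1$ and $1/q\ge 1/p - 1$, which are implied by \eqref{c1}, so the statement is only of interest for $A\ge1$, and the construction above is valid in that range. A final sanity check is that the slab $Y_x$ stays inside $[-1/2,1/2]^2$: this is ensured by taking $\delta$ small and $c_0\le 1/2$, with the $y_1$-interval $[x_1-\delta,x_1]\subseteq[-2\delta,2\delta]$ also fitting once $\delta$ is small enough.
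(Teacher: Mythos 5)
Your construction is the right kind of Knapp example, but the exponent count is wrong, and the conclusion you actually derive is vacuous. With your choices, $|\supp f_\delta|\sim \delta\cdot c_0\cdot\delta^A\sim\delta^{A+1}$, the set of good $x$ has measure $\sim\delta^{A+1}$, and $|Y_x|\sim\delta^{A+1}$, so the comparison $\delta^{(A+1)/q}\,\delta^{A+1}\lesssim\delta^{(A+1)/p}$ only yields $\frac1q\ge\frac1p-1$, which holds for all $p,q$ and is not \eqref{ML1}; your parenthetical rearrangement also uses the false identity $\frac{2A+1}{A+1}=1+\frac1{A+1}$ (in fact $\frac{2A+1}{A+1}=1+\frac{A}{A+1}$). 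The source of the loss is the choice $|z_2|\le c_0$: taking the $z_2$-extent of $f_\delta$ to be of constant size makes $\|f_\delta\|_p$ too large relative to the gain. The whole point of this example is the imbalance between a macroscopic range of $x_2$ (which you do have) and a test function whose $z_2$-thickness is comparable to the $y_2$-thickness of the slab $Y_x$.

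The fix is small: keep $x_2$ ranging over a fixed interval and $|y_2-x_2|\le\delta^A$, but take $f_\delta=\1_{\{|z_1|\le\delta,\ |z_2|\le 2\delta^A,\ |z_3|\le M\delta^A\}}$, so that $|\supp f_\delta|\sim\delta^{2A+1}$ while the $x$-set and $Y_x$ still have measure $\sim\delta^{A+1}$; then $\delta^{(A+1)/q}\,\delta^{A+1}\lesssim\delta^{(2A+1)/p}$ gives exactly \eqref{ML1}, and \eqref{ML2} follows by duality as you say. Your verification of the third coordinate, $|x_3-\varphi(y_1,y_2)|\lesssim\delta^A$ via the Taylor-support hypothesis (pure $y_2$-powers cancel up to a Lipschitz term $\lesssim|x_2-y_2|$, and every other monomial carries $y_1^\alpha$ with $\alpha\ge A$), is correct and is the same mechanism as in the paper's proof; indeed, after the substitution $\delta\mapsto\delta^{1/A}$ the corrected construction is precisely the paper's (which uses $|x_1|,|y_1|\lesssim\delta^{1/A}$, $z_2$- and $z_3$-thickness $\sim\delta$, and $x_2\in[0,1/4]$). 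As written, however, the proposal does not prove the lemma.
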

\begin{proof}
To show \eqref{ML1}, let
\[
f_\delta=\1_{[-2\delta^{1/A},2\delta^{1/A}]\times [-\delta,\delta] \times [-K\delta,K\delta]},
\]
for some sufficiently large $K$. For $x\in\R^3$ with $|x_1|\leq \delta^{1/A}$, $x_2\in [0,1/4]$, $|x_3 - \varphi(x_1,x_2)|\leq\delta$ consider 
\[
Y_x=\{y\in [-1/2,1/2]^2:\ y_1\in[-\delta^{1/A},\delta^{1/A}], |y_2-x_2|\leq\delta\}.
\]
Observe that $|Y_x| \sim \delta^\frac{A+1}{A}$ and for all sufficiently small $\delta$ and $y\in Y_x$, we have
\begin{eqnarray*}
|x_3-\varphi(y_1,y_2)| &\le& |x_3-\varphi(x_1,x_2)|+|\varphi(x_1,x_2)-\varphi(y_1,y_2)|\\
&\le& \delta + |a_{0 F}||x_2^F-y_2^F| + \sum_{(\alpha,\beta) \in\mathcal{T}(\varphi), \alpha\ge A} a_{\alpha \beta} (|x_1^\alpha x_2^\beta| + |y_1^\alpha y_2^\beta|) \\
&\lsm& \delta,
\end{eqnarray*}
where $a_{0 F}\not=0$ if $(0,F)\in\mathcal{T}(\varphi)$. Hence for this choice of $x,y$, we have $x-\Phi(y)\in \supp f_\delta$ and thus
\[
\delta^\frac{A+1}{Aq} \delta^{\frac{A+1}{A}} \lsm \|\scriptA f_\delta\|_{L^q(\R^3)}\lsm \|f_\delta\|_{L^p(\R^3)} \sim \delta^\frac{2A+1}{Ap}
\]
proving \eqref{ML1}. Condition \eqref{ML2} follows by duality.
\end{proof}

\section{A preliminary estimate}
\label{dimline}
Let $H$ be the height of the mixed homogeneous polynomial $w$. We show here that $\scriptA$ is bounded from $L^p(\R^3)$ to $L^q(\R^3)$ for all $(\frac{1}{p},\frac{1}{q})$ satisfying $\frac{1}{q}=\frac{3}{p}-2$ and $\frac{H+3}{H+4}<\frac{1}{p}$. For $\varepsilon>0$ let
\[
\scriptA^\varepsilon f(x)=\int_{\R^2}f(x-\Phi(y))\psi(y)|w(y)|^{-\frac{1}{H}+\varepsilon \gamma}dy,
\]
where $\gamma=\frac{1}{H}+\frac{1}{4}$. Using the strong type endpoint estimate of Gressman (Theorem 3 in \cite{GressJGA}) we obtain $\|\scriptA^1f\|_{L^{4}(\R^3)}\lesssim \|f\|_{L^{\frac43}(\R^3)}$. On the other hand, changing variables and using the mixed homogeneity of $w$, we see that 
\begin{equation} \label{pramanik}
\int_{\R^2}\psi(y)|w(y)|^{-\frac{1}{H}+\varepsilon \gamma}dy<\infty
\end{equation}
for every $\varepsilon>0$, which clearly implies that $\|\scriptA^\varepsilon f\|_{L^{1}(\R^3)}\lesssim \|f\|_{L^{1}(\R^3)}$ (\eqref{pramanik} also generalises to real-analytic functions, see Pramanik \cite{P01}). By analytic interpolation (see e.g. \cite{steinweiss}) we conclude that $\scriptA^{\varepsilon(1-\theta)+\theta}$ is bounded from $L^{p_\theta}(\R^3)$ to $L^{q_\theta}(\R^3)$ for 
\[
\frac{1}{p_\theta}= 1-\theta +\frac34\theta,\quad \frac{1}{q_\theta}= 1-\theta +\frac{\theta}{4}.
\] 
Observe that for $\theta=\frac{4-\varepsilon(4+H)}{(4+H)(1-\varepsilon)}$ we have $\scriptA^{\varepsilon(1-\theta)+\theta}=\scriptA$, i.e. $\scriptA$ is bounded from $L^{p(\varepsilon)}(\R^3)$ to $L^{q(\varepsilon)}(\R^3)$, where $\frac{1}{p(\varepsilon)}=1-\frac{1}{(4+H)(1-\varepsilon)}+\frac{\varepsilon}{4(1-\varepsilon)}$ and $\frac{1}{q(\varepsilon)}=1-\frac{3}{(4+H)(1-\varepsilon)}+\frac{3}{4}\frac{\varepsilon}{1-\varepsilon}$. Note that $(\frac{1}{p(\varepsilon)},\frac{1}{q(\varepsilon)})\longrightarrow(\frac{H+3}{H+4},\frac{H+1}{H+4})$ as $\varepsilon\longrightarrow 0$.

\section{$N\ge d_h(\varphi)+1$}
\label{N>d+1}

Here by Corollary \ref{hompolynomialcor} $s=1$, $N=n_l$, for some $1\le l \le M$, and this is the only multiplicity of a real root greater than $d_h(\varphi)$ hence $\h=N$. We write $\lambda = \lambda_l$. We observe that $\di\geq2$. This is seen as follows. Since $\nabla\varphi(0,0)=(0,0)$ implies $\di\geq1$, we first observe that\[\di=1\Longrightarrow\varphi(y_1,y_2)=ay_1y_2+by_1^{r+1},\quad a\neq0, b\in\R,\] for which $N\leq1$, which is a contradiction. On the other hand, $\di>1\Longrightarrow N\geq3$, since $N$ is an integer. We obtain a contradiction by \[2>\di=\frac{\nu_1+r\nu_2+rn}{r+1}\geq\frac{rn}{r+1}\geq \frac{3r}{r+1}\geq \frac{3\cdot2}{2+1}=2.\]Therefore $\di\geq2$ and $ N\geq3$.  
Lemma \ref{hardcomputation} shows that $w(y)=(y_2-\lambda y_1^r)^{2N-3}\tilde{Q}(y)$, where $\tilde{Q}(y_1,\lambda y_1^r)\neq0$ for $y_1\neq0$.

Since $w$ is $\Big(\frac{\kappa_1}{2(1-\kappa_1-\kappa_2)},\frac{\kappa_2}{2(1-\kappa_1-\kappa_2)}\Big)$-homogeneous of degree one and 
\[N>\di+1/2 \Longleftrightarrow 2N-3>d_h(w),\] we can conclude that the multiplicity of any other real root of $w$ is bounded by $d_h(w)=2\di-2$. In particular, this implies that the height of $w$ is $H=2N-3$. Let $\eta\in C^\infty(\R)$ with \[\supp\eta\subseteq[-2,2], \quad 0\leq\eta\leq1, \quad \eta=1\ \text{on} \ [-1,1],\] and let $\varepsilon>0$ be small. Let 
\[\mathcal{A}_{\mathcal{C}_\lambda}f(x)=\int_{\R^2}f(x-\Phi(y))\psi(y)\eta\Big(\frac{y_2-\lambda y_1^r}{\varepsilon y_1^r}\Big)dy,\]
where $\lambda=\lambda_l$. Let 
\[\mathcal{A}_{{\mathcal{C}_\lambda}^{c}}f(x)=\int_{\R^2}f(x-\Phi(y))\psi(y)\Big(1-\eta\Big(\frac{y_2-\lambda y_1^r}{\varepsilon y_1^r}\Big)\Big)dy.\]
The operator $\mathcal{A}_{{\mathcal{C}_\lambda}^{c}}$ is bounded on a bigger region than $\scriptA_{\mathcal{C}_\lambda}$. In fact, it is bounded on the trapezium given by the lines $\frac{1}{p}\leq\frac{1}{q}$, $\frac{1}{q}\geq\frac3p-2$, $\frac1q \ge \frac{1}{3p}$ and $\frac{1}{q} >\frac1p-\frac{1}{\di+1}$. This is seen as follows. If $\varepsilon>0$ is chosen sufficiently small, then for any $\delta>0$ we have
\[\int_{\R^2}|w(y)|^{-\frac{1}{2\di-2}+\delta}\psi(y)\Big(1-\eta\Big(\frac{y_2-\lambda y_1^r}{\varepsilon y_1^r}\Big)\Big)dy<\infty.\]  The operator \[f\longmapsto\int_{\R^2}f(\cdot-\Phi(y))|w(y)|^{\frac{1}{4}}\psi(y)\Big(1-\eta\Big(\frac{y_2-\lambda y_1^r}{\varepsilon y_1^r}\Big)\Big)dy\]
is bounded as an operator from $L^{\frac{4}{3}}(\R^3)$ to $L^{4}(\R^3)$. A simple interpolation argument as in Section \ref{dimline} yields the desired result. Next, we turn our attention to 
$\mathcal{A}_{\mathcal{C}_\lambda}$.
Change of variables gives
\[\mathcal{A}_{\mathcal{C}_\lambda}f(x)=\int_{\R^2}f(x-\Phi(y_1,y_2+\lambda y_1^r))\psi(y_1,y_2+\lambda y_1^r)\eta\Big(\frac{y_2}{\varepsilon y_1^r}\Big)dy,\]
In this case we decompose bidyadically. For this purpose consider a dyadic partition of unity
\[\Sum\limits_{k=L}^\infty\chi_k(s)=1,\quad\text{for}\ s\in[-2^{-L},0)\cup(0,2^{-L}],\quad L\in\Z. \]
The function $\chi$ is a smooth positive function supported in $[-2,-\frac12]\cup[\frac12,2]$ and we set $\chi_k=\chi(2^k\cdot)$. Then for some $L=L(\lambda,r)$ we obtain
\begin{align*}
&\scriptA_{\mathcal{C}_\lambda}f(x) \ \le \ \Sum\limits_{k=L}^\infty\Sum\limits_{j=0}^\infty\int_{\R^2}f(x-\Phi(y_1,y_2+\lambda y_1^r))\eta\Big(\frac{y_2}{\varepsilon y_1^r}\Big)\chi_j(y_1)\chi_k(y_2)dy\\
&= \ \Sum\limits_{k=L}^\infty\Sum\limits_{j=0}^\infty 2^{-j-k}\int_{\R^2}f(x-\Phi(2^{-j}y_1,2^{-k}y_2+\lambda2^{-jr}y_1^r))\eta\Big(\frac{2^{-k}y_2}{\varepsilon 2^{-jr}y_1^r}\Big)\chi\otimes\chi(y)dy\\
&\le \ \Sum\limits_{k=L}^\infty\Sum\limits_{0\leq j\ll k/r} 2^{-j-k}\int_{\R^2}f(x-\Phi(2^{-j}y_1,2^{-k}y_2+\lambda 2^{-jr}y_1^r))\chi\otimes\chi(y) dy,
\end{align*} 
where $\chi\otimes\chi(y)=\chi(y_1)\chi(y_2)$. We remark that $j$ is indeed much smaller than $k/r$ and this can be achieved assuming $\varepsilon$ to be sufficiently small. Observe that \[\supp\chi\otimes\chi\subseteq\{y_1:\ 2^{-1}\leq|y_1|\leq 2\}\times \{y_2:\ 2^{-1}\leq|y_2|\leq 2\}.\] 
We have \[\Phi(2^{-j}y_1,2^{-k}y_2+\lambda2^{-j}y_1^r)=\begin{pmatrix}
 2^{-j}y_1 \\
 2^{-k}y_2+\lambda2^{-jr}y_1^r\\
2^{-j\nu_1-kn_l-jr\nu_2-jr(n-n_l)}\varphi^{l}_{j,k}(y))
\end{pmatrix}^{tr},\]
where \[\varphi^{l}_{j,k}(y)=y_1^{\nu_1}y_2^{n_l}(2^{-k+jr}y_2+\lambda_ly_1^r)^{\nu_2}\prod\limits_{\genfrac{}{}{0pt}{}{i=1}{i\neq l}}^M(2^{-k+jr}y_2-(\lambda_i-\lambda_l)y_1^r)^{n_i}.\]
Recall that $n_l=N$. We conclude
\[\mathcal{A}_{\mathcal{C}_\lambda}f(x)\leq\Sum\limits_{k=L}^\infty\Sum\limits_{0\leq j \ll k/r}2^{-j-k}\underbrace{\mathcal{A}_{j,k}^l(f\circ D_{j,k})( D_{j,k}^{-1}x)}_{\tilde{\mathcal{A}}^l_{j,k}f(x)},\]where
\[\mathcal{A}^l_{j,k}f(x)=\int_{\R^2}f(x-\Phi^l_{j,k}(y))\chi\otimes\chi(y)dy=:f\ast\mu_{j,k}^l(x)\]
and\[\Phi^l_{j,k}(y)=(y_1,2^{-k+jr}y_2+\lambda_ly_1^{r},\varphi^{l}_{j,k}(y)),\]
\[D_{j,k}(z_1,z_2,z_3)=(2^{-j}z_1,2^{-jr}z_2,2^{-j\nu_1-jr(\nu_2+n-N)}2^{-kN}).\]
\begin{lemma} \label{decaylemma} We have
\begin{equation}\label{decay1}|\widehat{\mu^l_{j,k}}(\xi)|\lesssim(1+|\xi|)^{-\frac12}.\end{equation}
\end{lemma}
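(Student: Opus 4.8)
**The plan is to prove the uniform decay estimate \eqref{decay1} via stationary/nonstationary phase, exploiting that the phase $\varphi^l_{j,k}$ behaves like a well-curved piece of surface on the support of $\chi\otimes\chi$.** Since $\widehat{\mu^l_{j,k}}(\xi) = \int_{\R^2} e^{-i\xi\cdot\Phi^l_{j,k}(y)}\chi\otimes\chi(y)\,dy$, the oscillation comes from the phase $\Psi(y)=\xi_1 y_1 + \xi_2(2^{-k+jr}y_2+\lambda_l y_1^r) + \xi_3\varphi^l_{j,k}(y)$. On the support of $\chi\otimes\chi$ we have $|y_1|\sim 1$, $|y_2|\sim 1$, and since $j\ll k/r$ the factor $2^{-k+jr}$ is tiny, so $\varphi^l_{j,k}(y)$ is a small perturbation of $y_1^{\nu_1}y_2^N(\lambda_l y_1^r)^{\nu_2}\prod_{i\ne l}(-(\lambda_i-\lambda_l)y_1^r)^{n_i} = c\, y_1^{\nu_1+r(\nu_2+n-N)} y_2^N$ with $c\ne0$ (using $\lambda_i\ne\lambda_l$). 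The key structural point is that this leading term is a \emph{monomial} in $y_1,y_2$ with both exponents positive (indeed $N\ge 3$), and $|y_1|,|y_2|\sim1$, so its mixed second derivatives are bounded below; more precisely the Hessian of $\varphi^l_{j,k}$ in $y$, on the annular support, is uniformly bounded and its determinant is comparable to $|w|$ evaluated at the rescaled point, which is $\sim$ a positive constant there.

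First I would record the elementary bound $|\widehat{\mu^l_{j,k}}(\xi)|\lesssim 1$ from the trivial estimate, which handles $|\xi|\lesssim 1$, so it suffices to treat $|\xi|\ge 1$ and gain a factor $|\xi|^{-1/2}$. Next I would split into the standard cases. If $|\xi_3|$ dominates, i.e. $|\xi_3|\gtrsim |\xi|$: the Hessian of $y\mapsto \xi_3\varphi^l_{j,k}(y)$ in the variable $y_2$ alone is $\xi_3\,\partial_{y_2}^2\varphi^l_{j,k}$, and since $\varphi^l_{j,k}$ has a factor $y_2^N$ with $N\ge 3$ while the remaining factors are $\sim1$ and have bounded $y_2$-derivatives (because $2^{-k+jr}$ is small), we get $|\partial_{y_2}^2\varphi^l_{j,k}(y)|\sim 1$ on the support. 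So for fixed $y_1$, one-dimensional van der Corput in $y_2$ gives decay $|\xi_3|^{-1/2}\sim|\xi|^{-1/2}$, uniformly in $j,k$ and in $y_1$, and integrating in $y_1$ over the compact support preserves this. If instead $|\xi_1|+|\xi_2|\gtrsim|\xi|$: then either $\partial_{y_1}\Psi$ or $\partial_{y_2}\Psi$ is bounded below by $c|\xi|$ away from a small exceptional set, because $\partial_{y_1}\Psi = \xi_1 + \xi_2 r\lambda_l y_1^{r-1} + \xi_3\partial_{y_1}\varphi^l_{j,k}$ and $\partial_{y_2}\Psi = \xi_2 2^{-k+jr} + \xi_3\partial_{y_2}\varphi^l_{j,k}$; on the region where the $\xi_3$-terms are not dominant one of the linear combinations of $\xi_1,\xi_2$ survives, and nonstationary phase (integration by parts) gives rapid decay, certainly better than $|\xi|^{-1/2}$. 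The overlap region — where $|\xi_3|$ and $|\xi_1|+|\xi_2|$ are comparable — is handled by whichever of the two arguments applies, and one checks the constants are uniform in $j,k$ because the phase derivatives are comparable to those of the fixed monomial $c\,y_1^{a}y_2^{N}$ up to $O(2^{-k+jr})$ errors.

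**The main obstacle** will be making the case analysis genuinely uniform in $j$ and $k$: I must verify that the implicit constants in the van der Corput bound and in the nonstationary estimate do not degrade as $k\to\infty$ (equivalently $2^{-k+jr}\to0$), and that the cutoff $j\ll k/r$ is exactly what is needed so that the perturbation terms $2^{-k+jr}y_2$ never overwhelm the leading monomial structure, keeping $|\partial_{y_2}^2\varphi^l_{j,k}|\sim 1$ and keeping $\varphi^l_{j,k}$ and all its derivatives bounded on the annulus $\{|y_i|\sim1\}$. A clean way to organise this is to note that as $2^{-k+jr}\to0$ the phase converges in $C^\infty$ on the (compact, annular) support to $\xi_1 y_1 + \xi_2\lambda_l y_1^r + \xi_3 c\,y_1^{a}y_2^{N}$, so for all $k$ large the relevant lower bounds hold with a uniform constant, and finitely many small $k$ are handled individually; alternatively one invokes Lemma \ref{hardcomputation} and the fact that $\tilde Q(y_1,\lambda y_1^r)\ne0$ to pin down exactly which derivative of the phase stays nondegenerate. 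Once the uniformity is secured, assembling the pieces via a partition of the $\xi$-sphere into the two (or three) regions above yields \eqref{decay1}.
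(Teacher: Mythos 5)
Your treatment of the $\xi_3$-dominant regime is essentially the paper's: since the extra factors of $\varphi^l_{j,k}$ are bounded away from zero with $O(\delta_{j,k})$ $y_2$-derivatives, $|\partial_{y_2}^2\varphi^l_{j,k}|\gtrsim 1$ uniformly once $\delta_{j,k}=2^{-k+jr}$ is small, and van der Corput in $y_2$ gives $|\xi_3|^{-1/2}$. The gap is in your other case. When $|\xi_1|+|\xi_2|\gtrsim|\xi|$ and $|\xi_3|$ is not dominant, it is simply not true that ``one of the linear combinations of $\xi_1,\xi_2$ survives'' up to a small exceptional set: take $\xi_3=0$, $\xi_1=-r\lambda_l\xi_2$. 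Then $\partial_{y_1}\Psi=\xi_2 r\lambda_l(y_1^{r-1}-1)$ vanishes at $y_1=1$, an interior point of the annular support, and $\partial_{y_2}\Psi=\xi_2\delta_{j,k}$ is only of size $\delta_{j,k}|\xi|$, which is \emph{not} bounded below by $c|\xi|$ uniformly in $j,k$ (this is exactly the degeneration you flagged as the ``main obstacle'', and the $C^\infty$-limit argument does not cure it, because the limit phase $\xi_1y_1+\xi_2\lambda_l y_1^r+\xi_3cy_1^ay_2^N$ has the same interior stationary points). So nonstationary phase cannot give ``rapid decay, certainly better than $|\xi|^{-1/2}$'' there; in fact $|\xi|^{-1/2}$ is the true order of decay in that regime, coming from a stationary point with nondegenerate second derivative.

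The missing idea, which is the paper's Case 1, is to use the curvature of the curve $y_1\mapsto\lambda_l y_1^r$: when $|\xi_2|\gg|\xi_3|$ (and $|\xi_1|\lesssim|\xi_2|$, the regime $|\xi_1|\gg|\xi_2|+|\xi_3|$ being genuinely nonstationary in $y_1$), write the phase as $\xi_2\bigl[\tfrac{\xi_1}{\xi_2}y_1+\delta_{j,k}y_2+\lambda_l y_1^{r}+\tfrac{\xi_3}{\xi_2}\varphi^l_{j,k}(y)\bigr]$ and note that, since $r\ge2$ and $|y_1|\sim1$, $|\partial_{y_1}^2(\lambda_l y_1^r)|\gtrsim1$ while the $\tfrac{\xi_3}{\xi_2}\varphi^l_{j,k}$ term is small in $C^2$; van der Corput in $y_1$ then yields $|\xi_2|^{-1/2}\sim(1+|\xi|)^{-1/2}$, uniformly in $j,k$. (One could instead make your exceptional-set idea rigorous by splitting at scale $|\xi|^{-1/2}$ around the $y_1$-stationary set, but that is just a proof of van der Corput by hand; as written, the contribution of the exceptional set is never estimated.) A further minor point: your parenthetical claim that the Hessian determinant of the rescaled piece is comparable to a positive constant on the support is false --- it degenerates with $\delta_{j,k}$, which is precisely why the companion $L^{4/3}\to L^4$ bounds in the paper carry a $\delta_{j,k}^{-1/4}$ loss --- though this claim is not actually used in your argument.
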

\begin{proof}
Let $\delta_{j,k}=2^{-k+jr}$. Clearly,
\[\widehat{\mu^l_{j,k}}(\xi_1,\xi_2,\xi_3)=\int_{\R^2}e^{-i(\xi_1y_1+\xi_2(\delta_{j,k}y_2+\lambda_ly_1^{r})+\xi_3\varphi^{l}_{j,k}(y))}\chi\otimes\chi(y)dy.\]

We can clearly assume that $|\xi|\geq2$. If $|\xi_1|\gg|\xi_2|+|\xi_3|$, then $|\xi_1|\sim|\xi|$. Integration by parts in the $y_1$ variable then gives even a better estimate
\[|\widehat{\mu^l_{j,k}}(\xi)|\lesssim\frac{1}{|\xi_1|}\sim\frac{1}{1+|\xi|}.\]
We can therefore assume that $|\xi_1|\lesssim|\xi_2|+|\xi_3|$.

\vspace{1em}
\noindent\textit{Case 1: $|\xi_2|\gg|\xi_3|$.} In this case $0\neq|\xi_2|\sim|\xi|$ and $\frac{|\xi_3|}{|\xi_2|}\varphi^{l}_{j,k}(y)$ is small on $\supp \chi\otimes\chi$ in any $C^K$-norm in $(y_1,y_2)$. Since $r\geq2$, we have
\[|\partial_{y_1}^2\lambda_ly_1^r|\gtrsim1.\]
Therefore we can apply van der Corput's Lemma in the $y_1$ variable and obtain 
\[|\widehat{\mu^l_{j,k}}(\xi)|\lesssim\frac{1}{|\xi_2|^{\frac{1}{2}}}\sim\frac{1}{(1+|\xi|)^{\frac12}}.\]

\vspace{1em}
\noindent\textit{Case 2: $|\xi_3|\gtrsim|\xi_2|$.} This gives $0\neq|\xi_3|\sim|\xi|$. If $\delta_{j,k}$ is sufficiently small, then $|\partial_{y_2}^2\varphi^{l}_{j,k}(y)|\gtrsim1$, since $N\geq2$ (recall that $N\geq3$). If we apply van der Corput's Lemma in $y_2$ we obtain
\[|\widehat{\mu^l_{j,k}}(\xi)|\lesssim\frac{1}{|\xi_3|^{\frac{1}{2}}}\sim\frac{1}{(1+|\xi|)^{\frac12}}.\]
This concludes the proof of \eqref{decay1}.
\end{proof}
It is well-known (see \cite{littman, str}) that if we convolve with a measure whose Fourier transform decays to the order $(1+|\xi|)^{-\rho}$, then the corresponding operator is bounded from $L^p$ to $L^{p'}$ for 
\[
\frac1p= \frac12 \left(1+\frac{\rho}{\rho+1}\right).
\]
Hence estimate \eqref{decay1} gives \[\|\mathcal{A}^l_{j,k}\|_{L^{\frac32}(\R^{3})\to L^{3}(\R^3)}\lesssim1.\] Together with Lemma \ref{childrenlemma}, this implies that 
\[\|\tilde{\mathcal{A}}^l_{j,k}\|_{L^{\frac32}(\R^{3})\to L^{3}(\R^3)}\lesssim 2^{j(\frac{r+1}{3})+j(\frac{\nu_1+r\nu_2+r(n-N)}{3})+\frac{kN}{3}}.\] Estimate 
$\|\tilde{\mathcal{A}}^l_{j,k}\|_{L^{1}(\R^{3})\to L^{1}(\R^3)}\lesssim1$ and complex interpolation then give  
\[\|\tilde{\mathcal{A}}^l_{j,k}\|_{L^{p_\theta}(\R^3)\to L^{q_\theta}(\R^3)}\lesssim 2^{\frac{kN}{3}\theta}2^{j(\frac{r+1}{3}+\frac{\nu_1+r\nu_2+r(n-N)}{3})\theta},\] where \[\frac{1}{p_{\theta}}=1-\theta+\frac{2}{3}\theta \quad\text{and}\quad\frac{1}{q_\theta}=1-\theta+\frac{1}{3}\theta.\]
We conclude that \[\|\mathcal{A}_{\mathcal{C}_\lambda}\|_{L^{p_\theta}(\R^3)\to L^{q_\theta}(\R^3)}\lesssim\Sum\limits_{k=L}^\infty 2^{-k+\frac{kN}{3}\theta}\Sum\limits_{j\leq\frac{k}{r}}2^{-j+j(\frac{r+1}{3}+\frac{\nu_1+r\nu_2+r(n-N)}{3})\theta}.\]
It is enough to show that 
\begin{equation}\label{sum1}\Sum\limits_{k=0}^\infty 2^{-k+\frac{kN}{3}\theta}\Sum\limits_{j\leq\frac{k}{r}}2^{-j+j(\frac{r+1}{3}+\frac{\nu_1+r\nu_2+r(n-N)}{3})\theta}<\infty\end{equation}
for any $\theta \in(0,\frac{3}{N})$. Recall that since $N\geq3$, we have indeed $(0,\frac{3}{N})\subseteq(0,1)$.
Then for each $\theta \in(0,\frac{3}{N})\subseteq(0,1)$ we have
\begin{equation*}\renewcommand{\arraystretch}{1.8}\begin{array}{rcl}
-1+\theta(\frac{r+1+\nu_1+r\nu_2+r(n-N)}{3})&<&-1+\frac{3}{N}(\frac{r+1+\nu_1+r\nu_2+r(n-N)}{3})\\
&=&-1+\frac{r+1+(r+1)\di-rN}{N}.
\end{array}\end{equation*}
 The last expression is lower or equal to zero if and only if $N\geq \di + 1$. Therefore we have for the inner sum 
\[\Sum\limits_{j\leq\frac{k}{r}}2^{-j+j(\frac{r+1}{3}+\frac{\nu_1+r\nu_2+r(n-N)}{3})\theta}\lesssim1\quad \text{for any} \ \theta\in\Big(0,\frac{3}{N}\Big),\]
and so
\[\Sum\limits_{k=0}^\infty 2^{-k+\frac{kN}{3}\theta}\Sum\limits_{j\leq\frac{k}{r}}2^{-j+j(\frac{r+1}{3}+\frac{\nu_1+r\nu_2+r(n-N)}{3})}\lesssim \Sum\limits_{k=0}^\infty 2^{-k+\frac{kN}{3}\theta}<\infty.\]This shows that the sum in \eqref{sum1} converges for every $\theta \in(0,\frac{3}{N})$.

Observe that, as $\theta\to \frac3N$, $(\frac{1}{p_\theta},\frac{1}{q_\theta})\to (1-\frac1N,1-\frac2N)$, which is the point of intersection of the lines $\frac{1}{q}=\frac{1}{p}-\frac{1}{N}$ and $\frac{1}{q}=\frac{N+2}{N+1}\frac{1}{p}-\frac{2}{N+1}$. The rest of the proof of part (a) of Theorem \ref{main} in the case $N\ge d_h(\varphi) +1$ then follows by the argument of Section \ref{dimline}, using that the height of $w$ is $2N-3$, duality and interpolation.


\section{$\di(\varphi)+1/2\leq N<\di(\varphi)+1$} 
\label{inter}
Although this case might look quite specific, it turns that there are `many' mixed-homogeneous polynomials satisfying this condition. To see this first observe that 
\begin{equation*}\renewcommand{\arraystretch}{1.8}\begin{array}{rcl}
&&\di+1/2\leq N<\di+1 \\
&\Longleftrightarrow&\frac{\nu_1+r\nu_2+rn}{r+1}+1/2\leq N < \frac{\nu_1+r\nu_2+rn}{r+1}+1\\
&\Longleftrightarrow&N\in \left[\nu_1+r\nu_2+r(n-N)+\frac{r+1}{2}, \nu_1+r\nu_2+r(n-N)+r+1\right).\\
\end{array}\end{equation*}
If we let without loss of generality $n_1=N$, then for any given numbers $\nu_1$, $\nu_2$, $r$, $n_2,\ldots,n_M\in \N$ with \[\nu_1+r\nu_2+r\sum\limits_{i=2}^Mn_i+r+1>N\geq \nu_1+r\nu_2+r\sum\limits_{i=2}^Mn_i+\frac{r+1}{2}\]and $\lambda_i\in\R\setminus\{0\}$ the polynomial $\varphi(y_1,y_2)=y_1^{\nu_1}y_2^{\nu_2}\prod\limits_{i=1}^{M}(y_2-\lambda_iy_1^r)^{n_i}$ satisfies the assumption $\di+1>N\geq \di+1/2$.

We present here a lemma that characterises the mixed homogeneous polynomials with $\di\in[1,2)$ and $s=1$. This will also be used later in Section \ref{s=1}.

\begin{lemma} \label{d<2}
Let $\varphi$ be given by \eqref{factor} with $s=1$ and $1\le d_h(\varphi)<2$. Then one of the following three holds:
\begin{enumerate}
\item[(a)] $n=1$, $\nu_2=0$ and $1\le\nu_1 \le r+1$,
\item[(b)] $n=1$, $\nu_2=1$ and $\nu_1\in\{0,1\}$,
\item[(c)] $n=2$, $\nu_2=0$ and $\nu_1\in\{0,1\}$.
\end{enumerate}
\end{lemma}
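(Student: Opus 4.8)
The plan is to use the formula for the homogeneous distance from Proposition~\ref{hompolynomial}, namely
\[
d_h(\varphi)=\frac{1}{\kappa_1+\kappa_2}=\nu_1 s+\nu_2 r+nrs,
\]
which when $s=1$ reads $d_h(\varphi)=\nu_1+\nu_2 r+nr$. The constraint $1\le d_h(\varphi)<2$ then becomes the single Diophantine inequality
\[
1\le \nu_1+(\nu_2+n)r<2,
\]
to be solved over $\nu_1,\nu_2\in\N_0$, $n=\sum_j n_j\in\N$, $r\in\N$ with $r\ge 1$, subject to the standing hypotheses of the section (in particular $\kappa_1<\kappa_2$, i.e.\ $r\ge 1$, and $\varphi$ is not a monomial so $n\ge 1$). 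The argument is essentially a finite enumeration.

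First I would split on the value of $r$. Since $n\ge1$, the term $(\nu_2+n)r$ is at least $r$, so $r<2$ forces $r=1$; hence throughout $r=1$ and the inequality collapses to $1\le \nu_1+\nu_2+n<2$. Because $\nu_1,\nu_2,n$ are nonnegative integers with $n\ge1$, the only possibility is $\nu_1+\nu_2+n=1$, forcing $n=1$ and $\nu_1=\nu_2=0$. But wait --- this would give only one case, which is too few; the resolution is that $r\ge2$ is in fact the generic situation and $r=1$ is special. Let me redo this: the hypothesis of the section is merely $s=1$ (equivalently $\kappa_2/\kappa_1=r\in\N$), and $r$ ranges over all of $\N$. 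So I instead split into $r\ge2$ and $r=1$ only after noting that $(\nu_2+n)r<2$ with $n\ge1$ already forces $(\nu_2+n)r\le 1$ unless $\nu_1=0$; more carefully, from $\nu_1+(\nu_2+n)r<2$ and $n\ge1$ we get $(\nu_2+n)r<2$, hence either $r=1$, or $r\ge2$ with $\nu_2+n=1$ giving $\nu_2=0,n=1$ and then $\nu_1<2-r\le0$, so $\nu_1=0$ and $r<2$, contradiction. Thus $r=1$ always, and the inequality is $1\le\nu_1+\nu_2+n<2$, i.e.\ $\nu_1+\nu_2+n=1$. This yields only $n=1,\nu_1=\nu_2=0$, which contradicts the three-case conclusion of the lemma.

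The discrepancy tells me the formula I should be using is the one displayed in Proposition~\ref{hompolynomial} literally: $\tfrac{1}{\kappa_1+\kappa_2}=\tfrac{\nu_1 s+\nu_2 r+nrs}{r+s}$, so with $s=1$ this is $d_h(\varphi)=\tfrac{\nu_1+\nu_2 r+nr}{r+1}$, \emph{not} $\nu_1+\nu_2 r+nr$. The condition $1\le d_h(\varphi)<2$ then becomes
\[
r+1\le \nu_1+(\nu_2+n)r<2(r+1)=2r+2.
\]
This is the inequality to enumerate. I would proceed as follows: rewrite it as $r+1\le \nu_1+(\nu_2+n)r$ and $\nu_1+(\nu_2+n)r\le 2r+1$, set $m:=\nu_2+n\ge1$, and note $mr\le 2r+1$ forces $m\le 2+\tfrac1r\le 3$, and in fact $m\le2$ unless $r=1$. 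Case $m\ge3$: then $r=1$, $3\le\nu_1+3\le 3$ so $\nu_1=0$; but $m=\nu_2+n\ge3$ with $r=1$ gives $d_h=\tfrac{\nu_1+\nu_2+n}{2}\ge\tfrac32$, and $<2$ needs $\nu_2+n\le3$, so $\nu_2+n=3$; one checks this is excluded by Corollary~\ref{hompolynomialcor} or by a direct count --- I'd double-check whether it actually violates $d_h<2$; $\tfrac{0+3}{2}=\tfrac32<2$, so it is \emph{not} excluded by the distance bound alone. The main obstacle is thus reconciling the enumeration with the three listed cases: I expect one needs the additional structural facts --- that $\nu_2+n=3$ with $\nu_1=0,r=1$ would make $\varphi$ (after the analysis of Corollary~\ref{hompolynomialcor}(b)) have a root of multiplicity exceeding $d_h$, pushing it outside the regime, or simply that such $\varphi$ are reducible to monomials/excluded forms. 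So the careful step is: for $r=1$, $d_h=\tfrac{\nu_1+\nu_2+n}{2}$, the constraint $1\le d_h<2$ gives $2\le \nu_1+\nu_2+n\le 3$; enumerate $(\nu_1,\nu_2,n)$ with $n\ge1$: sum $=2$ gives $(0,0,2),(1,0,1),(0,1,1)$; sum $=3$ gives $(0,0,3),(1,0,2),(0,1,2),(2,0,1),(1,1,1),(0,2,1)$. For $r\ge2$, $m=\nu_2+n\le 2$: if $m=1$ then $\nu_2=0,n=1$ and $r+1\le\nu_1+r\le 2r+1$ gives $1\le\nu_1\le r+1$, which is case (a); if $m=2$ then $2r\le\nu_1+2r$ automatically and $\le 2r+1$ gives $\nu_1\le1$, while $r+1\le\nu_1+2r$ is automatic, and splitting $m=2$ into $(\nu_2,n)=(0,2)$ or $(1,1)$ gives cases (c) and (b) respectively with $\nu_1\in\{0,1\}$. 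It remains to eliminate the surplus $r=1$ solutions: I would argue each either violates $\kappa_1<\kappa_2$ (impossible since $r=1$ means $\kappa_1=\kappa_2$!). Indeed $r=1,s=1$ gives $\kappa_1=\kappa_2$, contradicting the blanket hypothesis $\kappa_1<\kappa_2$; hence \emph{all} $r=1$ solutions are vacuous and only $r\ge2$ survives, leaving exactly cases (a), (b), (c). So the clean proof is: $d_h=\tfrac{\nu_1+(\nu_2+n)r}{r+1}$; $\kappa_1<\kappa_2$ and $s=1$ force $r\ge2$; then $1\le d_h<2$ reads $r+1\le\nu_1+(\nu_2+n)r<2r+2$; since $n\ge1$, $(\nu_2+n)r\le 2r+1<2r+2$ forces $\nu_2+n\le 2$ (as $3r\ge 6>2r+1$ for $r\ge2$); finitely check $\nu_2+n\in\{1,2\}$ against $r+1\le\nu_1+(\nu_2+n)r\le 2r+1$ to read off (a), (b), (c).

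To write it up I would: (1) state $d_h=\tfrac{\nu_1+(\nu_2+n)r}{r+1}$ from Proposition~\ref{hompolynomial} with $s=1$; (2) recall $r\ge2$ from the standing assumption $\kappa_1<\kappa_2$; (3) rewrite $1\le d_h<2$ as $r+1\le \nu_1+(\nu_2+n)r\le 2r+1$ (integers); (4) deduce $\nu_2+n\le 2$ from the upper bound and $r\ge2$, and note $\nu_2+n\ge1$; (5) in the sub-case $\nu_2+n=1$ (so $\nu_2=0$, $n=1$): the bounds give $1\le\nu_1\le r+1$ --- case (a); (6) in the sub-case $\nu_2+n=2$: the lower bound $r+1\le\nu_1+2r$ is automatic and the upper bound gives $\nu_1\le1$, and $(\nu_2,n)\in\{(1,1),(0,2)\}$ gives cases (b) and (c). The only point requiring care is confirming $r\ge2$ --- this is immediate from $s=1$, $\kappa=(s/m,r/m)=(1/m,r/m)$ and $\kappa_1<\kappa_2\iff 1<r$ --- and confirming the upper bound in (3) is $\le 2r+1$ rather than $<2r+2$ by integrality, which rules out the boundary value $d_h=2$ consistently with the strict inequality $d_h<2$. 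I expect no real obstacle once the correct formula for $d_h$ is in hand; the entire argument is a short finite case analysis.
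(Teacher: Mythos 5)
Your final clean argument --- using $d_h(\varphi)=\frac{\nu_1+r\nu_2+rn}{r+1}$ for $s=1$, invoking $r\ge 2$ from the standing assumption $\kappa_1<\kappa_2$, deducing $\nu_2+n\le 2$ from the upper bound, and then enumerating the sub-cases $\nu_2+n\in\{1,2\}$ against $r+1\le\nu_1+(\nu_2+n)r\le 2r+1$ --- is correct and is essentially the paper's own proof. The exploratory detours in your write-up (the un-normalised formula for $d_h$ and the $r=1$ enumeration) are correctly discarded before the final argument, so they do not affect its validity.
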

\begin{proof}
We have $2(r+1)>d_h(r+1)=\nu_1+r\nu_2+rn$ which implies that $\nu_2+n < 2+2/r \le 3$, i.e. $\nu_2+n\le 2$. This leaves us with three choices for the pair $n$, $\nu_2$, since $n=0$ would mean that $\varphi$ is a monomial.
\begin{enumerate}
\item[(a)] If $n=1$ and $\nu_2=0$, then $2r+2> \nu_1 +r \ge r+1$ and thus $1\le\nu_1 \le r+1$.
\item[(b)] If $n=1$ and $\nu_2=1$, then $2r+2> \nu_1 +2r \ge r+1$ and thus $\nu_1\in\{0,1\}$.
\item[(c)] If $n=2$ and $\nu_2=0$, then $\nu_1\in\{0,1\}$ as above.
\end{enumerate}
\end{proof}

The only polynomials that are under consideration in this section and have $\di\in[1,2)$ are contained in part (c) of Lemma \ref{d<2} for $\nu_1=0$. This is because, first of all, $d_h<2$ implies $3>N\geq 3/2$ and therefore $N=2=n$. Then, if $\nu_1=1$, $d_h=\frac{2r+1}{r+1}\ge \frac53$ and so do not satisy the inequality $N\ge d_h+1/2$. This leaves us with $n=2$, $\nu_1=\nu_2=0$. In that case
\[
d_h + \frac12 = \frac{2r}{r+1} + \frac12 \le 2 \Longleftrightarrow r \in \{2,3\}.
\]

Simple computations show that if $\varphi(y_1,y_2)=(y_2-\lambda y_1^r)^2$, then
\[w(y)=-4\lambda r(r-1)y_2y_1^{r-2}+4\lambda^2r(r-1)y_1^{2r-2}.\]
This implies that $w(y)=-8\lambda(y_2-\lambda y_1^2)$ for $r=2$ and $w(y)=-24\lambda y_1(y_2-\lambda y_1^3)$ for $r=3$.

The $L^p-L^q$ estimates on the line segment
\[\left(\left(\frac{2N}{2N+1},\frac{2N-2}{2N+1}\right),\left(1,1\right)\right]\] (i.e. all points on the line segment joining $(\frac{2N}{2N+1},\frac{2N-2}{2N+1})$ to $(1,1)$, including $(1,1)$, but not including $(\frac{2N}{2N+1},\frac{2N-2}{2N+1})$) follow by the argument of Section \ref{dimline}. Therefore we are only left with establishing estimates on the line segment
\begin{equation}\label{lin1}\left(\left(\frac{2\di+1-N}{\di+1},\frac{2\di-N}{\di+1}\right),\left(1,1\right)\right],\end{equation} 
since the lines $\frac1q = \frac1p - \frac{1}{d_h +1}$ and $\frac{1}{q} = \frac{N+2}{N+1}\frac{1}{p}-\frac{2}{N+1}$ intersect at the point \linebreak$(\frac{2\di+1-N}{\di+1}, \frac{2\di-N}{\di+1})$. The proof of the full region described in part (a) of Theorem \ref{main} then follows by duality.

The proof is again based on an appropriate decomposition of the domain of integration. We focus on the analysis of the case, where $w$ vanishes along the curve \[\mathcal{C}_\lambda=\{(y_1,\lambda y_1^r)\in\R^2: y_1\in\R\},\quad \lambda=\lambda_l\in\R\setminus\{0\},\quad \text{with}\ n_l=N,
 \ l\in\{1,\ldots,M\}.\] We know that
$w$ vanishes to the order $T=2N-3$ along the curve $\mathcal{C}_\lambda$.
We need to show that the operator 
\[\scriptA_{\mathcal{C}_\lambda}f(x)=\int_{\R^2}f(x-\Phi(y))\psi(y)\eta\Big(\frac{y_2-\lambda y_1^r}{\varepsilon y_1^r}\Big)dy\]is bounded on the line
\eqref{lin1} provided $\varepsilon>0$ is chosen to be sufficiently small. That this is sufficient to conclude that $\scriptA$ is also bounded on this line is seen as in the first case, namely $\scriptA=\scriptA_{\mathcal{C}_\lambda}+\scriptA_{{\mathcal{C}_\lambda}^{c}}$, where  
\[\scriptA_{{\mathcal{C}_\lambda}^{c}}f(x)=\int_{\R^2}f(x-\Phi(y))\psi(y)\left(1-\eta\Big(\frac{y_2-\lambda y_1^r}{\varepsilon y_1^r}\Big)\right)dy.\]
The operator $\scriptA_{{\mathcal{C}_\lambda}^{c}}$ is even bounded on the trapezium given by the lines $\frac{1}{p}\leq\frac{1}{q}$, $\frac{1}{q}\geq\frac3p-2$, $\frac1q \ge \frac{1}{3p}$ and $\frac{1}{q} >\frac1p-\frac{1}{\di+1}$, which is a larger region, since for any $\delta>0$, we have
\[\int_{\R^2}\left| w(y)\right|^{-\frac{1}{2\di-2}+\delta}\psi(y)\left(1-\eta\Big(\frac{y_2-\lambda y_1^r}{\varepsilon y_1^r}\Big)\right)dy<\infty.\]
Thus we only need to focus our analysis on $\scriptA_{\mathcal{C}_\lambda}$. A change of variables gives
\[\scriptA_{\mathcal{C}_\lambda}f(x)=\int_{\R^2}f(x-\Phi(y_1,y_2+\lambda y_1^r))\psi(y_1,y_2+\lambda y_1^r)\eta\Big(\frac{y_2}{\varepsilon y_1^r}\Big)dy.\]
Observe that 
\[\Phi(y_1,y_2+\lambda y_1^r)=\left(y_1,y_2+\lambda y_1^r,y_1^{\nu_1}y_2^{N}\left(y_2+\lambda y_1^r\right)^{\nu_2}\prod\limits_{\genfrac{}{}{0pt}{}{i=1}{i\neq l}}^M(y_2-\tilde{\lambda}_iy_1^r)^{n_i}\right),\]where $\tilde{\lambda}_i=\lambda_i-\lambda\in\C\setminus\{0\}$.
Then decomposing bidyadically we have, for some integer $L=L(\lambda,p)$, 
\begin{equation*}\renewcommand{\arraystretch}{1.8}\begin{array}{rcl}
\scriptA_{\mathcal{C}_\lambda}f(x)&\leq&\Sum\limits_{k=L}^{\infty}\Sum\limits_{j=0}^\infty\int_{\R^2}f(x-\Phi(y_1,y_2+\lambda y_1^r))\eta\Big(\frac{y_2}{\varepsilon y_1^r}\Big)\chi_j(y_1)\chi_k(y_2)dy\\
&\leq&\Sum\limits_{k=L}^{\infty}\Sum\limits_{ j\ll\frac{k}{r}}2^{-j-k}\int_{\R^2}f(x-\Phi(2^{-j}y_1,2^{-k}y_2+\lambda 2^{-jr}y_1^r))\chi\otimes\chi(y)dy.
\end{array}\end{equation*}
We also have
\[\int_{\R^2}f(x-\Phi(2^{-j}y_1,2^{-k}y_2+\lambda 2^{-jr}y_1^r))\chi\otimes\chi(y)dy=
\underbrace{\scriptA_{j,k}(f\circ D_{j,k})(D_{j,k}^{-1}x)}_{\tilde{\scriptA}_{j,k}f(x)},\]
where 
\[\scriptA_{j,k}f(x)=\int_{\R^2}f(x-(y_1,\delta_{j,k}y_2+\lambda y_1^r,\varphi_{j,k}(y)))\chi\otimes\chi(y)dy =: f * \mu_{j,k}(x),\]
\[\varphi_{j,k}(y)=y_1^{\nu_1}y_2^{N}(\delta_{j,k}y_2+\lambda y_1^r)^{\nu_2}\prod\limits_{\genfrac{}{}{0pt}{}{i=1}{i\neq l}}^M(\delta_{j,k}y_2-\tilde{\lambda}_iy_1^r)^{n_i},\quad \delta_{j,k}=2^{-k+jr}\ll1,\]
\[D_{j,k}(z_1,z_2,z_3)=(2^{-j}z_1,2^{-jr}z_2,2^{-j\nu_1-jr\nu_2-jr(n-n_l)-kn_l}z_3).\]
In order to see the boundedness along the line \eqref{lin1} first recall that $n_l=N$. Since $\min\{r,N\}\geq2$, taking second derivatives in the $y_1$ and the $y_2$ variables and using van der Corput estimates for oscillatory integrals as in the case $N\ge d_h+1$ (similar oscillatory integrals have also been considered in \cite{zimmermann}), we have $|\hat{\mu}_{j,k}(\xi)|\lsm (1+|\xi|)^{-1/2}$ which implies
\[
\|\scriptA_{j,k}\|_{L^{\frac32}(\R^3)\to L^{3}(\R^3)}\lesssim1.
\] 
Using the weighted strong type $L^{4/3}\to L^4$ estimate of Gressman \cite{GressJGA} for the hypersurface given by
\[
\tilde{\Phi} = \left(\begin{array}{ccc} 1 & 0 & 0 \\ 0 & 1 & 0 \\ 0 & 0 & \delta_{j,k}^{-N} \end{array}\right) \Phi
\]
and using the reparametrisation invariance of the Gaussian curvature, we have
\[\|\scriptA_{j,k}\|_{L^{\frac43}(\R^3)\to L^{4}(\R^3)}\lesssim\delta_{j,k}^{-\frac{1}{4}}.\] Interpolation then gives
\[\|\scriptA_{j,k}\|_{L^{\frac{2(N-\di)+1}{N-\di+1}}(\R^3)\to L^{\frac{2(N-\di)+1}{N-\di}}(\R^3)}\lesssim\delta_{j,k}^{\frac{N-\di-1}{2(N-\di)+1}}.\]
\begin{remark}
We observe that indeed we have
\[\frac23<\frac{N-\di+1}{2(N-\di)+1}\leq\frac34,\]
since it is equivalent to the assumption $\di+1/2\leq N<\di+1$.
\end{remark}
By Lemma \ref{childrenlemma} we have
\begin{align*}
\| \tilde{\scriptA}_{j,k} & \|_{L^{\frac{2(N-\di)+1}{N-\di+1}}(\R^3)\to L^{\frac{2(N-\di)+1}{N-\di}}(\R^3)} \\
& \ \lsm \ \delta_{j,k}^{\frac{N-\di-1}{2(N-\di)+1}} (\det D_{j,k})^{-\frac{1}{2(N-\di)+1}}\\
& \ = \ \delta_{j,k}^{\frac{N-\di-1}{2(N-\di)+1}}2^{\frac{j(r+1)+jr\nu_2+j\nu_1+jr(n-N)+kN}{2(N-\di)+1}}\\
& \ = \ \delta_{j,k}^{\frac{N-\di-1}{2(N-\di)+1}}2^{\frac{j((r+1)(\di+1)-rN)+kN}{2(N-\di)+1}}\\
& \ = \ 2^{-k\frac{N-\di-1}{2(N-\di)+1}}2^{jr\frac{N-\di-1}{2(N-\di)+1}}2^{\frac{j((r+1)(\di+1)-rN)+kN}{2(N-\di)+1}}.
\end{align*}
Since, \[\|\tilde{\scriptA}_{j,k}\|_{L^{1}(\R^3)\to L^{1}(\R^3)}\lesssim1,\] 
it suffices to see that the sum
\[
\Sum\limits_{k=0}^{\infty}\Sum\limits_{ 0\leq j \leq\frac{k}{r}} 2^{-k\left[1-\theta\frac{\di+1}{2(N-\di)+1}\right]} 2^{-jr\left[1-\theta\frac{\di+1}{2(N-\di)+1}\right]},
\]
converges for any $\theta<\frac{2(N-\di)+1}{\di+1}$, which is clear. 
\begin{remark} We observe that indeed $\frac{2(N-\di)+1}{\di+1}\in[0,1]$. Because of the assumption $N\geq\di+1/2$ we have clearly $\frac{2(N-\di)+1}{\di+1}\geq0$.\\On the other hand, \[\frac{2(N-\di)+1}{\di+1}\leq1\Longleftrightarrow N\leq \frac{3\di}{2}.\]
That $N\leq \frac{3\di}{2}$ is true, if $\di+ 1/2\leq N<\di+1$ is seen as follows. First, observe that $\di\geq2$ implies $\frac{3\di}{2}\geq\di+1$, and therefore $N\leq \frac{3\di}{2}$. On the other hand, if $\di<2$, then by the discussion directly after Lemma \ref{d<2}, we have $d_h\ge 4/3$. Thus $3\di/2\ge 2=N$, giving $\frac{2(N-\di)+1}{\di+1}\in[0,1]$.\end{remark}

\section{$\max\{\nu_1,\nu_2\}\ge d_h(\varphi)$}
\label{nu}
In this case we have $\h=\max\{\nu_1,\nu_2\}$ and in this section we will drop the assumption that $\kappa_1<\kappa_2$. This allows us to assume without loss of generality that $\nu_1 \ge \nu_2$. 
\begin{lemma}\label{computationsforw}
Suppose that $\varphi(y)=y_1^nQ(y_1,y_2)$, $n\geq1$, where \[Q(y_1,y_2)=cy_2^m+\mathcal{O}(y_1),\quad c\neq0,\ m\geq1.\]
Then $w(y)=y_1^{2n-2}\tilde{Q}(y)$, where
 \[\tilde{Q}(y)=c^2nm(1-n-m)y_2^{2m-2}+\mathcal{O}(y_1).\]
\end{lemma}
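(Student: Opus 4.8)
The statement to prove is Lemma~\ref{computationsforw}: if $\varphi(y) = y_1^n Q(y_1,y_2)$ with $Q(y_1,y_2) = cy_2^m + \mathcal{O}(y_1)$, $c \neq 0$, $n,m \geq 1$, then $w(y) = \det\varphi''(y) = y_1^{2n-2}\tilde{Q}(y)$ with $\tilde{Q}(y) = c^2 nm(1-n-m)y_2^{2m-2} + \mathcal{O}(y_1)$.

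\medskip

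\textbf{Plan of proof.} The plan is to compute the Hessian determinant directly and track the powers of $y_1$. First I would write $\varphi = y_1^n Q$ and use the product rule to expand all second-order partials. We have $\varphi_{y_1} = n y_1^{n-1} Q + y_1^n Q_{y_1}$, $\varphi_{y_2} = y_1^n Q_{y_2}$, and then
\begin{align*}
\varphi_{y_1 y_1} &= n(n-1)y_1^{n-2}Q + 2n y_1^{n-1}Q_{y_1} + y_1^n Q_{y_1 y_1},\\
\varphi_{y_1 y_2} &= n y_1^{n-1}Q_{y_2} + y_1^n Q_{y_1 y_2},\\
\varphi_{y_2 y_2} &= y_1^n Q_{y_2 y_2}.
\end{align*}
Then $w = \varphi_{y_1 y_1}\varphi_{y_2 y_2} - \varphi_{y_1 y_2}^2$. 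The term $\varphi_{y_1 y_1}\varphi_{y_2 y_2}$ has lowest $y_1$-power $n-2+n = 2n-2$ coming from $n(n-1)y_1^{n-2}Q \cdot y_1^n Q_{y_2 y_2}$; the term $\varphi_{y_1 y_2}^2$ has lowest power $2(n-1) = 2n-2$ from $(ny_1^{n-1}Q_{y_2})^2$. So both contributions to $w$ are divisible by $y_1^{2n-2}$, giving $w = y_1^{2n-2}\tilde{Q}$ with
\[
\tilde{Q} = n(n-1)Q\, Q_{y_2 y_2} + 2n y_1 Q_{y_1}Q_{y_2 y_2} + y_1^2 Q_{y_1 y_1}Q_{y_2 y_2} - n^2 Q_{y_2}^2 - 2n y_1 Q_{y_2}Q_{y_1 y_2} - y_1^2 Q_{y_1 y_2}^2.
\]
Every term with an explicit factor of $y_1$ is $\mathcal{O}(y_1)$, so modulo $\mathcal{O}(y_1)$ we have $\tilde{Q} \equiv n(n-1)Q Q_{y_2 y_2} - n^2 Q_{y_2}^2$.

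\medskip

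\textbf{Extracting the leading coefficient.} Next I would substitute the expansion $Q = cy_2^m + \mathcal{O}(y_1)$. Since differentiation in $y_2$ preserves the $\mathcal{O}(y_1)$ remainder, $Q_{y_2} = cm y_2^{m-1} + \mathcal{O}(y_1)$ and $Q_{y_2 y_2} = cm(m-1)y_2^{m-2} + \mathcal{O}(y_1)$. Hence, working modulo $\mathcal{O}(y_1)$,
\[
\tilde{Q} \equiv n(n-1)(cy_2^m)(cm(m-1)y_2^{m-2}) - n^2(cm y_2^{m-1})^2 = c^2 m y_2^{2m-2}\big[n(n-1)(m-1) - n^2 m\big].
\]
The bracket equals $n\big[(n-1)(m-1) - nm\big] = n[nm - n - m + 1 - nm] = n(1 - n - m)$, so $\tilde{Q} \equiv c^2 nm(1-n-m) y_2^{2m-2} \pmod{\mathcal{O}(y_1)}$, which is exactly the claim. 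One small point to note is that the $\mathcal{O}(y_1)$ notation here refers to terms in the polynomial ring $\R[y_1,y_2]$ divisible by $y_1$; since $Q$, its derivatives, products, and sums all respect this, the manipulation is purely formal polynomial algebra and requires no estimates.

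\medskip

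\textbf{Main obstacle.} There is no serious obstacle here — the proof is a bookkeeping exercise. The only place to be careful is the cancellation in the power of $y_1$: a priori $w$ could be divisible by a \emph{higher} power of $y_1$ than $y_1^{2n-2}$ if the leading coefficient $c^2 nm(1-n-m)$ vanished, but since $n,m \geq 1$ we have $1 - n - m \leq -1 \neq 0$ and $c \neq 0$, so $\tilde{Q}(0,y_2) = c^2 nm(1-n-m)y_2^{2m-2}$ is genuinely a nonzero polynomial (not identically zero), confirming that $2n-2$ is the exact order of vanishing of $w$ along $\{y_1 = 0\}$ whenever $m \geq 1$. This non-vanishing is what makes the lemma useful in the applications (e.g. in Section~\ref{s=1}), so it is worth recording explicitly.
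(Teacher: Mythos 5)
Your proof is correct and follows essentially the same route as the paper: expand the second derivatives of $\varphi=y_1^nQ$ by the product rule, factor out $y_1^{2n-2}$, reduce modulo $\mathcal{O}(y_1)$ to $n(n-1)Q\,\partial_2^2Q-n^2(\partial_2Q)^2$, and substitute $Q=cy_2^m+\mathcal{O}(y_1)$ to get the coefficient $c^2nm(1-n-m)$. The only (cosmetic) difference is that the paper treats $n=1$ as a separate easy case while your expansion handles it uniformly since the coefficient $n(n-1)$ vanishes there.
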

\begin{proof}
Case $n=1$ is easy to see, so we may and shall assume that $n\geq2$. Simple computations show
\[\partial_1^2\varphi(y_1,y_2)=y_1^{n-2}\left(n(n-1)Q(y)+2ny_1\partial_1Q(y)+y_1^2\partial_1^2Q(y)\right),\]
\[\partial_1\partial_2\varphi(y_1,y_2)=y_1^{n-1}\left(n\partial_2Q(y)+y_1\partial_1\partial_2Q(y)\right),\]
\[\partial_2^2\varphi(y_1,y_2)=y_1^n\partial_2^2Q(y),\]
and
\begin{equation*}\renewcommand{\arraystretch}{1.8}\begin{array}{rcl}
w(y)&=&\partial_1^2\varphi(y)\partial_2^2\varphi(y)-(\partial_1\partial_2\varphi(y))^2\\
&=&y_1^{2n-2}\left(n(n-1)Q(y)\partial_2^2Q(y)+\mathcal{O}(y_1)\right)-y_1^{2n-2}\left(n\partial_2Q(y)+\mathcal{O}(y_1)\right)^2\\
&=&y_1^{2n-2}\left(n(n-1)Q(y)\partial_2^2Q(y)-n^2(\partial_2Q(y))^2+\mathcal{O}(y_1)\right).
\end{array}\end{equation*}
Furthermore, we have
\[n(n-1)Q(y)\partial_2^2Q(y)-n^2(\partial_2Q(y))^2=c^2mn(1-n-m)y_2^{2m-2}+\mathcal{O}(y_1),\]
which gives $w(y)=y_1^{2n-2}\tilde{Q}(y)$.
\end{proof}
By assumption we have $\varphi(y)=y_1^{\nu_1}\left(y_2^{\nu_2+sn}+\mathcal{O}(y_1)\right)$ and $\nu_1\geq\max\{1,\di\}$. Lemma \ref{computationsforw} gives 
\[
w(y)=y_1^{2\nu_1-2}\big[\nu_1(\nu_2+sn)(1-\nu_1 - \nu_2 - sn)y_2^{2\nu_2+2sn-2} + \mathcal{O}(y_1)\big].
\]
Obviously $\nu_1(\nu_2+sn)(1-\nu_1 - \nu_2 - sn)\neq0$, which gives that $w$ has a zero of order $2\nu_1-2$ along the axis $y_1=0$. If $w$ is not a monomial, then $d_h(w)=2d_h(\varphi)-2 \le 2\nu_1 - 2$. If $w$ happens to be a monomial, say $w(y)=cy_1^{2\nu_1-2}y_2^B$, then by considering the mixed homogeneity of derivatives of mixed homogeneous polynomials, we should have
\[
\frac{(2\nu_1-2)\kappa_1+B\kappa_2}{2(1-\kappa_1-\kappa_2)} = 1,
\]
which is equivalent to $\nu_1\kappa_1+(1+B/2)\kappa_2=1$. Since we have also assumed that $\nu_1\ge d_h(\varphi) \Leftrightarrow \nu_1\kappa_1 \ge 1 - \nu_1\kappa_2$, we have $1 - (1+B/2)\kappa_2 \ge 1 - \nu_1\kappa_2$ which is equivalent to $B\le 2\nu_1-2$. Hence, whether $w$ is a monomial or not, $h(w)= 2\nu_1 - 2$. By the argument of Section \ref{dimline}, we obtain the boundedness of $\scriptA$ on the line segment 
\[\left(\left(\frac{2\nu_1+1}{2\nu_1+2},\frac{2\nu_1-1}{2\nu_1+2}\right),(1,1)\right].\]
The point $\left(\frac{2\nu_1+1}{2\nu_1+2},\frac{2\nu_1-1}{2\nu_1+2}\right)$ is the intersection of the lines $\frac1q = \frac1p - \frac{1}{\nu_1+1}$ and $\frac1q = \frac3p -2$. The boundedness on the rest of the region described in part (b) of Theorem \ref{main}, follows by interpolation.

\section{$N<d_h(\varphi) + 1/2$, $\max\{\nu_1,\nu_2\} < d_h(\varphi)$, $s\ge 2$}
\label{s>1}
We split our analysis in several subcases starting from the simpler ones. Here, by Corollary \ref{hompolynomialcor}, we have $n<d_h(\varphi)$. We will require the following lemma.
\begin{lemma}\label{computationsforw2}
Suppose that $\varphi(y_1,y_2)=y_2^M+y_1^AQ(y_1,y_2)$, $A, M\in\N$, $\min\{A,M\}\geq2$ and 
$Q(y_1,y_2)=cy_2^B+\mathcal{O}(y_1)$, $c\neq0$, $B\in\N_0$. Then
\[w(y)=y_1^{A-2}\left(cA(A-1)M(M-1)y_2^{B+M-2}+\mathcal{O}(y_1)\right).\]
In particular, $w$ vanishes to the order $A-2$ transversally to the line $y_1=0$.
\end{lemma}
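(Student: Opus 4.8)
The plan is to compute the three second-order partial derivatives of $\varphi(y_1,y_2)=y_2^M+y_1^AQ(y_1,y_2)$ directly and then extract the leading behaviour of $w=\partial_1^2\varphi\,\partial_2^2\varphi-(\partial_1\partial_2\varphi)^2$ as a polynomial in $y_1$. First I would record
\[
\partial_1^2\varphi(y) = y_1^{A-2}\bigl(A(A-1)Q(y)+2Ay_1\partial_1Q(y)+y_1^2\partial_1^2Q(y)\bigr)
= y_1^{A-2}\bigl(A(A-1)cy_2^B+\mathcal{O}(y_1)\bigr),
\]
using $Q(y_1,y_2)=cy_2^B+\mathcal{O}(y_1)$. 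Next,
\[
\partial_2^2\varphi(y) = M(M-1)y_2^{M-2}+y_1^A\partial_2^2Q(y) = M(M-1)y_2^{M-2}+\mathcal{O}(y_1^A),
\]
and
\[
\partial_1\partial_2\varphi(y) = y_1^{A-1}\bigl(A\partial_2Q(y)+y_1\partial_1\partial_2Q(y)\bigr) = \mathcal{O}(y_1^{A-1}),
\]
so that $(\partial_1\partial_2\varphi(y))^2=\mathcal{O}(y_1^{2A-2})$.

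Now I would combine these. Since $A\ge 2$ we have $y_1^{A-2}\cdot\mathcal{O}(y_1^A)=\mathcal{O}(y_1^{2A-2})$ and likewise the cross-term $(\partial_1\partial_2\varphi)^2$ is $\mathcal{O}(y_1^{2A-2})$; both are absorbed into the error once we factor out $y_1^{A-2}$, because $2A-2\ge A-1=(A-2)+1$. Hence
\[
w(y) = y_1^{A-2}\bigl(A(A-1)cy_2^B+\mathcal{O}(y_1)\bigr)\cdot\bigl(M(M-1)y_2^{M-2}+\mathcal{O}(y_1)\bigr) + \mathcal{O}(y_1^{2A-2})
= y_1^{A-2}\bigl(cA(A-1)M(M-1)y_2^{B+M-2}+\mathcal{O}(y_1)\bigr),
\]
which is the claimed identity. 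Since $c\neq0$ and $\min\{A,M\}\ge2$, the coefficient $cA(A-1)M(M-1)$ is nonzero, so the polynomial $\tilde{Q}(y):=y_1^{-(A-2)}w(y)$ satisfies $\tilde{Q}(0,y_2)=cA(A-1)M(M-1)y_2^{B+M-2}\not\equiv 0$; in particular $y_1^{A-2}\mid w$ exactly, i.e. $w$ vanishes to order exactly $A-2$ transversally to the line $y_1=0$.

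There is no real obstacle here: the argument is a bookkeeping exercise in tracking orders of vanishing in $y_1$, entirely parallel to Lemmas \ref{computationsforw} and \ref{hardcomputation}. The only point requiring a moment's care is verifying that every discarded term genuinely has $y_1$-order at least $A-1$ after the factor $y_1^{A-2}$ is removed, which is exactly where the hypothesis $A\ge2$ (so that $2A-2\ge A-1$) is used; the hypothesis $M\ge2$ is needed only to guarantee that the surviving coefficient does not vanish. One should also note the edge case $B+M-2=0$ (possible when $B=0$, $M=2$), in which $\tilde{Q}(0,y_2)$ is a nonzero constant — this causes no difficulty and is consistent with the stated conclusion about transversal vanishing along $y_1=0$.
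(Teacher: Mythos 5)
Your proof is correct and follows essentially the same route as the paper: compute the three second-order derivatives of $\varphi$ explicitly, factor $y_1^{A-2}$ out of $\partial_1^2\varphi$, note that the cross term $(\partial_1\partial_2\varphi)^2=\mathcal{O}(y_1^{2A-2})$ is absorbed into the error after this factorisation, and identify the leading coefficient $cA(A-1)M(M-1)y_2^{B+M-2}$, nonzero by $c\neq0$ and $\min\{A,M\}\ge2$. The bookkeeping of $y_1$-orders matches the paper's argument exactly, so nothing further is needed.
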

\begin{proof}
Computations show
\[\partial_1^2\varphi(y_1,y_2)=A(A-1)y_1^{A-2}Q(y)+2Ay_1^{A-1}\partial_1Q(y)+y_1^{A}\partial_1^2Q(y),\]
\[\partial_1\partial_2\varphi(y_1,y_2)=Ay_1^{A-1}\partial_2Q(y)+y_1^A\partial_1\partial_2Q(y),\]
\[\partial_2^2\varphi(y_1,y_2)=M(M-1)y_2^{M-2}+y_1^A\partial_2^2Q(y).\]This gives
\begin{equation*}\renewcommand{\arraystretch}{1.8}\begin{array}{rcl}
w(y)&=&\partial_1^2\varphi(y)\partial_2^2\varphi(y)-(\partial_1\partial_2\varphi(y))^2\\
&=&y_1^{A-2}\Big(A(A-1)Q(y)+\mathcal{O}(y_1)\Big)\Big(M(M-1)y_2^{M-2}+\mathcal{O}(y_1)\Big)\\
&& - \ y_1^{2A-2}\Big(A\partial_2Q(y)+\mathcal{O}(y_1)\Big)^2.\\
\end{array}\end{equation*}
Clearly, $2A-2>A-2$ and
\begin{equation*}\renewcommand{\arraystretch}{1.8}\begin{array}{rcl}
&&\Big(A(A-1)Q(y)+\mathcal{O}(y_1)\Big)\Big(M(M-1)y_2^{M-2}+\mathcal{O}(y_1)\Big)\\
&=&A(A-1)M(M-1)Q(y)y_2^{M-2}+\mathcal{O}(y_1)\\
&=&cA(A-1)M(M-1)y_2^{B+M-2}+\mathcal{O}(y_1).\end{array}\end{equation*}
We conclude that
\[w(y)=y_1^{A-2}\left(cA(A-1)M(M-1)y_2^{B+M-2}+\mathcal{O}(y_1)\right).\]
Since $\min\{A,M\}\geq2$, we have $cA(A-1)M(M-1)\neq0$ and this shows that $w$ vanishes to the order $A-2$ transversally to the line $y_1=0$.
\end{proof}



Before continuing with the analysis in this section, we note a corollary of Lemmas \ref{computationsforw} and \ref{computationsforw2} which was mentioned in the introduction.

\begin{corollary} \label{flat}
Let $\varphi$ be a mixed homogeneous polynomial. Then $w\not\equiv 0$.
\end{corollary}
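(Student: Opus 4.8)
The plan is to split according to whether the factor $y_1^{\nu_1}$ in \eqref{factor} is present, i.e.\ whether $\nu_1\ge 1$ or $\nu_1=0$, and to feed the two cases into Lemmas \ref{computationsforw} and \ref{computationsforw2} respectively. Recall that under our standing conventions $\varphi$ is as in \eqref{factor} with $\kappa_1<\kappa_2$, $\nabla\varphi(0,0)=(0,0)$, $M\ge 1$, $n=\sum_j n_j\ge 1$, and—since $\varphi$ has $\kappa$-degree $1>0$—no constant or linear term; replacing $\varphi$ by $\varphi/C$ multiplies $w$ by $C^{-2}$, so we may take $C=1$.

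Suppose first $\nu_1\ge 1$. Then $\varphi=y_1^{\nu_1}Q$ with $Q=y_2^{\nu_2}\prod_{j=1}^M(y_2^s-\lambda_j y_1^r)^{n_j}$, so $Q(y_1,y_2)=y_2^{\nu_2+sn}+\mathcal{O}(y_1)$ and $\nu_2+sn\ge sn\ge 1$. Lemma \ref{computationsforw} then applies and gives $w(y)=y_1^{2\nu_1-2}\bigl(\nu_1(\nu_2+sn)(1-\nu_1-\nu_2-sn)\,y_2^{2(\nu_2+sn)-2}+\mathcal{O}(y_1)\bigr)$. Since $\nu_1+\nu_2+sn\ge 2$, the coefficient here is nonzero, so $w\not\equiv 0$.

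Suppose now $\nu_1=0$, so $\varphi=y_2^{\nu_2}\prod_{j=1}^M(y_2^s-\lambda_j y_1^r)^{n_j}$; this is not divisible by $y_1$ (as $s\ge 1$) and, by hypothesis, is not a monomial. Setting $y_1=0$ gives $\varphi(0,y_2)=y_2^M$ with $M:=\nu_2+sn$, and since $\varphi$ has no constant or linear term, the presence of $(0,M)$ in $\mathcal{T}(\varphi)$ forces $M\ge 2$, whence $\kappa_2=1/M$. Let $A$ be the least $y_1$-degree occurring among monomials of $\varphi$ that are divisible by $y_1$ (finite, since $\varphi$ is not a monomial); the key sub-step is to show $A\ge 2$. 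Indeed, a monomial $y_1y_2^k$ in $\varphi$ would satisfy $k\ge 1$ (no linear term) and $\kappa_1+k\kappa_2=1$, i.e.\ $\kappa_1=(M-k)/M$; then $\kappa_1>0$ gives $k<M$ while $\kappa_1<\kappa_2=1/M$ gives $k>M-1$, which is impossible for an integer $k$. By $\kappa$-homogeneity there is exactly one monomial of $\varphi$ of $y_1$-degree $A$, say $c\,y_1^Ay_2^B$ with $c\ne 0$, so $\varphi=y_2^M+y_1^AQ$ with $Q(0,y_2)=cy_2^B$ and $\min\{A,M\}\ge 2$. Lemma \ref{computationsforw2} now gives $w(y)=y_1^{A-2}\bigl(cA(A-1)M(M-1)y_2^{B+M-2}+\mathcal{O}(y_1)\bigr)$, whose leading coefficient is nonzero; hence $w\not\equiv 0$.

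The only genuinely delicate point is the inequality $A\ge 2$ in the case $\nu_1=0$, and this is precisely where the hypotheses $\nabla\varphi(0,0)=(0,0)$ and $\kappa_1\ne\kappa_2$ are used essentially: the conclusion genuinely fails without $\nabla\varphi(0,0)=(0,0)$, e.g.\ $\varphi(y_1,y_2)=ay_2+by_1^p$ with $a,b\ne 0$ and $p\ge 2$ has $w\equiv 0$. Everything else reduces to a direct substitution into the two preceding lemmas.
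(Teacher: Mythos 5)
Your proof is correct and follows essentially the same route as the paper: a case split on whether $\varphi$ vanishes along a coordinate axis, fed into Lemmas \ref{computationsforw} and \ref{computationsforw2} (the paper splits on $\nu_1=\nu_2=0$ versus not and treats $\nu_1=0$, $\nu_2\ge 1$ by Lemma \ref{computationsforw} with the variables swapped, whereas you route that subcase through Lemma \ref{computationsforw2}). You merely supply the details the paper leaves implicit, namely why $A\ge 2$ and $M\ge 2$ under the standing assumptions $\kappa_1<\kappa_2$ and $\nabla\varphi(0,0)=(0,0)$, and you correctly note that the latter hypothesis is genuinely needed.
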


\begin{proof}
We only need to consider the case $\nu_1=\nu_2=0$ otherwise Lemma \ref{computationsforw} would apply and give us a form for $w$ which clearly cannot satisfy $w\equiv 0$. Given that $\nu_1=\nu_2=0$, and because $\varphi$ being mixed homogeneous implies that $A\ge 2$ in Lemma \ref{computationsforw2}, this lemma applies and gives a form for $w$ which again cannot satisfy $w\equiv 0$.
\end{proof}

\subsection{Case $\min\{\nu_1,\nu_2\}\geq1$}
Here we first observe that Lemma \ref{computationsforw} shows that $w$ vanishes to the order $2\nu_i-2$ transversally to the line $y_i=0$, $i=1,2$. If $w$ were a monomial, then $w(y)=cy_1^{2\nu_1-2}y_2^{2\nu_2-2}$ and
\[
\frac{(2\nu_1-2)\kappa_1}{2(1-\kappa_1-\kappa_2)} + \frac{(2\nu_2-2)\kappa_2}{2(1-\kappa_1-\kappa_2)} = \frac{\nu_1\kappa_1+\nu_2\kappa_2-\kappa_1-\kappa_2}{1-\kappa_1-\kappa_2} =1,
\]
implying $\nu_1\kappa_1+\nu_2\kappa_2 =1$. However that contradicts the assumption $\max\{\nu_1,\nu_2\}<d_h(\varphi)$, so $w$ cannot be a monomial. In addition, $\max\{\nu_1,\nu_2\}<d_h(\varphi)=\h$, implies \[\max\{2\nu_1-2,2\nu_2-2\}\leq2\di-2=d_h(w).\]
Since $s\ge 2$, we know that the multiplicity of any other root of $w$ which does not lie on a coordinate axis is bounded by the homogeneous distance of $w$, which is equal to $2d_h(\varphi)-2$. We conclude that the multiplicity of any root of $w$ is bounded by its homogeneous distance. This clearly shows that $\scriptA$ is bounded on the trapezium given by $\frac1q\le\frac1p$, $\frac1q\ge\frac3p-2$, $\frac1q\ge\frac{1}{3p}$ and $\frac1q >\frac1p-\frac{1}{\di+1}$.

\subsection{Case $\min\{\nu_1,\nu_2\}=0$} \label{monarg}
We split this case into two subcases. We first discuss the simpler subcase.
\subsubsection{Subcase $\nu_1\geq1$, $\nu_2=0$}
In this subcase we have $(\nu_1+rn,0)\in\mathcal{T}(\varphi)$. Since the Taylor support $\mathcal{T}(\varphi)$ consists of at least two points, we find a point $(A,B)\in\mathcal{T}(\varphi)$ with $1\leq B\leq\beta$ for all $(\alpha,\beta)\in\mathcal{T}(\varphi)\setminus\{(\nu_1+rn,0)\}$. Since $r/s \notin \N$, we have $B\geq2$. Lemma \ref{computationsforw2} then shows that $w$ vanishes to the order $B-2$ transversally to the line $y_2=0$. We will need to show that $B-2\leq 2\di-2$, i.e. $B\leq2\di$. This is seen as follows. Observe first that clearly $B\leq sn$ and $2\di=2\frac{s\nu_1+rsn}{r+s}$. Furthermore,
\[2\di\geq sn\Longleftrightarrow 2s\nu_1+2rsn\geq rsn+s^2n\Longleftrightarrow 2\nu_1+rn\geq sn,\]
and the last statement is true, since $r>s$. Hence $2\di\geq sn\geq B$. Here $w$ is not a monomial because, if it were, then
\[
\frac{(2\nu_1-2)\kappa_1}{2(1-\kappa_1-\kappa_2)} + \frac{(B-2)\kappa_2}{2(1-\kappa_1-\kappa_2)} = 1,
\]
implying $1 = \nu_1\kappa_1+B\kappa_2 /2 < (\kappa_1+\kappa_2)/d_h = 1$. Therefore $h(w)=2d_h-2$ and $\scriptA$ is bounded on the trapezium given by $\frac1q\le\frac1p$, $\frac1q\ge\frac3p-2$, $\frac1q\ge\frac{1}{3p}$ and $\frac1q >\frac1p-\frac{1}{\di+1}$.

\subsubsection{Subcase $\nu_1=0$, $\nu_2\geq0$}
Then $(0,\nu_2+sn)\in\mathcal{T}(\varphi)$ and there is a point $(A,B)\in\mathcal{T}(\varphi)$ with $1\leq A\leq\alpha$ for any $(\alpha,\beta)\in\mathcal{T}(\varphi)\setminus\{(0,\nu_2+sn)\}$. Since $r>s$ we have $A\geq2$. We conclude using Lemma \ref{computationsforw2} that $w$ vanishes to the order $A-2$ transversally to the line $y_1=0$. The method of proof resembles the proof in Sections \ref{N>d+1} and \ref{inter}, but we will need the following lemma that characterises polynomials under consideration here with $d_h<2$. 

\begin{lemma}\label{distless2}Let $\varphi$ be given by \eqref{factor} with $s\ge 2$, $\nu_1=0$ and $1\le d_h(\varphi)<2$. Then one of the following three holds:
\begin{enumerate}
\item[(a)] $n=1$, $\nu_2=0$, $s=2$ and $r$ is odd,
\item[(b)] $n=1$, $\nu_2=0$, $s=3$ and $r\in\{4,5\}$,
\item[(c)] $n=1$, $\nu_2=1$, $s=2$ and $r=3$.
\end{enumerate}
\end{lemma}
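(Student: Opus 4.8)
The plan is to proceed exactly as in the proof of Lemma \ref{d<2}, adapting the arithmetic to the case $s \ge 2$ and $\nu_1 = 0$. Recall from Proposition \ref{hompolynomial} that $d_h(\varphi)(r+s) = \nu_1 s + \nu_2 r + nrs$, so with $\nu_1 = 0$ the condition $1 \le d_h(\varphi) < 2$ becomes
\[
r + s \le \nu_2 r + nrs < 2(r+s).
\]
First I would use the upper bound to constrain $n$ and $\nu_2$. From $\nu_2 r + nrs < 2(r+s)$ and $s \ge 2$, $r > s$, one gets $nrs < 2r + 2s \le 2r + 2s$, so $ns < 2 + 2s/r < 2 + 2 = 4$; hence $ns \le 3$, and since $s \ge 2$ this forces $n = 1$ and $s \in \{2, 3\}$. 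With $n = 1$, the inequality reads $r + s \le \nu_2 r + rs < 2(r+s)$, i.e. $(\nu_2 + s) r < 2(r+s)$, so $\nu_2 + s < 2 + 2s/r < 4$, giving $\nu_2 < 4 - s$. For $s = 3$ this forces $\nu_2 = 0$; for $s = 2$ it allows $\nu_2 \in \{0, 1\}$.

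Next I would split into the remaining cases and bring in the lower bound $r + s \le (\nu_2 + s)r$ together with the coprimality condition $\gcd(r,s) = 1$ and $r > s$. If $s = 2$ and $\nu_2 = 0$: the lower bound $r + 2 \le 2r$ holds automatically for $r \ge 2$, so the only constraint is $\gcd(r,2) = 1$ with $r > 2$, i.e. $r$ odd; this is case (a). If $s = 2$ and $\nu_2 = 1$: the inequality $(\nu_2 + s)r = 3r < 2(r+2)$ gives $r < 4$, and with $\gcd(r,2)=1$, $r > 2$ we get $r = 3$; this is case (c). If $s = 3$ and $\nu_2 = 0$: then $(\nu_2 + s) r = 3r < 2(r + 3)$ gives $r < 6$, and with $\gcd(r,3) = 1$, $r > 3$ we get $r \in \{4, 5\}$; this is case (b). Finally I would check the lower bound $d_h(\varphi) \ge 1$ is not violated in any of these (it is immediate since $\nu_2 r + nrs \ge rs \ge r + s$ when $r,s \ge 2$ with $r > s$, using $rs - r - s = (r-1)(s-1) - 1 \ge 0$), so all three listed families genuinely satisfy $1 \le d_h(\varphi) < 2$, and conversely these are the only ones.

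This is a purely arithmetic argument and there is no serious obstacle; the only point requiring a little care is tracking the strict versus non-strict inequalities and making sure the coprimality constraint $\gcd(r,s)=1$ from Proposition \ref{hompolynomial} is invoked to pin down the exact values of $r$ in cases (a), (b) and (c), together with the standing assumption $r > s$ (equivalently $\kappa_1 < \kappa_2$) which excludes, for instance, $r = s$. One should also note that each of the three cases does occur, so the lemma is a genuine classification; verifying $d_h(\varphi) \in [1,2)$ in each case is the short check described above.
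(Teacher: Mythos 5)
Your proposal is correct and follows essentially the same route as the paper: both arguments are direct arithmetic manipulations of the inequality $d_h(\varphi)=\frac{\nu_2 r+nrs}{r+s}<2$ together with $r>s\ge 2$ and $\gcd(r,s)=1$, first forcing $n=1$ and $s\in\{2,3\}$, then bounding $\nu_2$ and pinning down $r$ case by case. The only difference is cosmetic (you clear denominators and split by $s$ before $\nu_2$, while the paper works with the fractions and splits by $\nu_2$ first), so no further changes are needed.
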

\begin{proof}First observe that we have
\[2>\di=\frac{r\nu_2+rsn}{r+s}\geq\frac{rsn}{r+s}\geq\frac{n}{\frac{1}{r}+\frac1s}\geq\frac{n}{\frac12+\frac13}=\frac65n.\]
This gives $n=1$, which in turn gives \[\di=\frac{\nu_2r+rs}{r+s}.\]
We have $\nu_2\in\{0,1\}$, since if $\nu_2\geq2$, we have a contradiction by
\[2>\di=\frac{r\nu_2+rs}{r+s}\geq\frac{2r+rs}{r+s}\geq\frac{2s+2r}{r+s}=2.\]
If $\nu_2=0$, then
\[
2>\frac{rs}{r+s}=\frac{1}{\frac1s+\frac1r} \Rightarrow \frac4s > 1 \Rightarrow s\le 3.
\]
If $s=2$, $r$ has to be odd since $\gcd(s,r)=1$. If $s=3$, then 
\[
2>\frac{3r}{r+3} \Rightarrow s<r\le 5.
\]
If $\nu_2=1$, then
\[
2>\frac{r+rs}{r+s} \Rightarrow \frac1s + \frac2r >1 \Rightarrow s\le 2 \Rightarrow s<r\le 3.
\]
\end{proof}

\paragraph{$A\leq2\di$} In this case we observe, as in the previous subcase, that $A\leq2\di$ implies that $w$ is not a monomial and $\scriptA$ is bounded on the trapezium given by $\frac1q\le\frac1p$, $\frac1q\ge\frac3p-2$, $\frac1q\ge\frac{1}{3p}$ and $\frac1q >\frac1p-\frac{1}{\di+1}$.
\vspace{1em}
\paragraph{$A>2\di$} \label{A>2d} Here the region is then determined by $\frac1q \le\frac1p$, $\frac1q \ge\frac3p-2$, $\frac1q\ge\frac{1}{3p}$, $\frac1q >\frac1p-\frac{1}{\di+1}$, $\frac1q>\frac{2A+1}{A+1}\frac1p-1$ and $\frac{1}{q}>\frac{A+1}{2A+1}\frac{1}{p}-\frac{1}{2A+1}$. This is contained in part (c) of Theorem \ref{main} for $T=A-2$ (cf. Lemma \ref{computationsforw2}). We need to show the boundedness of the operator $\mathcal{A}$ along the line segment 
\begin{equation}\label{Aline}\left(\left(\frac{(A+1)\di}{A(\di+1)},\frac{\di(A+1)-A}{A(\di+1)}\right),(1,1)\right],\end{equation}
the point $\left(\frac{(A+1)\di}{A(\di+1)},\frac{\di(A+1)-A}{A(\di+1)}\right)$ being the intersection of the lines $\frac1q = \frac1p - \frac{1}{d_h+1}$ and $\frac1q = \frac{2A+1}{(A+1)p} -1$.
The rest of the proof then follows by the argument of Section \ref{dimline} and interpolation. Observe that line \eqref{Aline} is parametrised by $\frac1q=\frac{2A-\di}{A-\di}\frac1p-\frac{A}{A-\di}$ and intersects the off-diagonal at the point
\[\left(\frac{2A-\di}{3A-2\di},\frac{A-\di}{3A-2\di}\right).\]
It suffices to show that for $\varepsilon>0$ sufficiently small the operator  \[\mathcal{A}_{\mathcal{C}_1}f(x)=\int_{\R^2}f(x-\Phi(y))\psi(y)\eta\left(\frac{y_1^r}{\varepsilon y_2^{s}}\right)dy\]is bounded as an operator from $L^p(\R^3)$ to $L^q(\R^3)$ for every $(\frac{1}{p},\frac{1}{q})$ on the line in \eqref{Aline}.
The ``remainder part'' $\mathcal{A}-\mathcal{A}_{\mathcal{C}_1}$ of the operator $\mathcal{A}$ is bounded on the larger trapezium given by $\frac1q \le\frac1p$, $\frac1q \ge\frac3p-2$, $\frac1q\ge\frac{1}{3p}$ and $\frac1q >\frac1p-\frac{1}{\di+1}$.
Bidyadic decomposition and change of variables gives 
\begin{equation*}\renewcommand{\arraystretch}{1.8}\begin{array}{rcl}
\mathcal{A}_{\mathcal{C}_1}f(x)&\leq&\Sum\limits_{j=0}^{\infty}\Sum\limits_{k=0}^\infty\int_{\R^2}f(x-\Phi(y))\eta\left(\frac{y_1^r}{\varepsilon y_2^s}\right)\chi_j(y_1)\chi_k(y_2)dy\\
&\leq&\Sum\limits_{j=0}^{\infty}\Sum\limits_{ k\ll\frac{jr}{s}}2^{-j-k}\int_{\R^2}f(x-\Phi(2^{-j}y_1,2^{-k}y_2))\chi\otimes\chi(y)dy\\
&=&\Sum\limits_{j=0}^{\infty}\Sum\limits_{  k\ll\frac{jr}{s}}2^{-j-k}\underbrace{\mathcal{A}_{j,k}(f\circ D_{j,k})(D_{j,k}^{-1}x)}_{\tilde{\mathcal{A}}_{j,k}f(x)},\end{array}\end{equation*}
where \[\mathcal{A}_{j,k}f(x)=\int_{\R^2}f(x-(y,\varphi_{j,k}(y)))\chi\otimes\chi(y)dy,\]
\[\varphi_{j,k}(y)=y_2^{\nu_2}\prod\limits_{i=1}^M(y_2^s-\lambda_i(\delta_{j,k}y_1)^r)^{n_i},\quad \delta_{j,k}=2^{\frac{ks}{r}-j},\]
\[D_{j,k}(z_1,z_2,z_3)=(2^{-j}z_1,2^{-k}z_2,2^{-k\nu_2-ksn}z_3).\]
Observe that $\varphi_{j,k}(y_1,y_2)=\varphi(\delta_{j,k}y_1,y_2)$ and that $\delta_{j,k}\ll1$. This gives \[\det\varphi_{j,k}^{\prime\prime}(y_1,y_2)=\delta_{j,k}^2w(\delta_{j,k}y_1,y_2)\sim\delta_{j,k}^{2+A-2}=\delta_{j,k}^{A}\quad\text{on}\ \supp\chi\otimes\chi.\]
This implies $\|\mathcal{A}_{j,k}\|_{L^{\frac{4}{3}}(\R^3)\to L^{4}(\R^3)}\lesssim \delta_{j,k}^{-\frac{A}{4}}$. Because $\nu_2+sn\geq2$, taking two derivatives in the $y_2$ variable we obtain, using van der Corput estimates in the same manner as in Sections \ref{N>d+1} and \ref{inter}, that
\[\|\mathcal{A}_{j,k}\|_{L^{\frac{3}{2}}(\R^3)\to L^{3}(\R^3)}\lesssim 1.\] Interpolation then gives
\[\|\mathcal{A}_{j,k}\|_{L^{\frac{3A-2\di}{2A-\di}}(\R^3)\to L^{\frac{3A-2\di}{A-\di}}(\R^3)}\lesssim \left(\delta_{j,k}^{-\frac{A}{4}}\right)^{\frac{4\di}{3A-2\di}}=\delta_{j,k}^{-\frac{A\di}{3A-2\di}}.\]
\begin{remark}We observe that indeed the point $\left(\frac{2A-\di}{3A-2\di},\frac{A-\di}{3A-2\di}\right)$ lies on the open line segment joining the points $\left(\frac23,\frac13\right)$ and $\left(\frac34,\frac14\right)$, since 
\[\frac23<\frac{2A-\di}{3A-2\di}<\frac34\Longleftrightarrow 0<\di<\frac{A}{2}. \]
\end{remark}
Applying Lemma \ref{childrenlemma}, we have
\begin{equation*}\renewcommand{\arraystretch}{1.8}\begin{array}{rcl}
\|\tilde{\mathcal{A}}_{j,k}\|_{L^{\frac{3A-2\di}{2A-\di}}(\R^3)\to L^{\frac{3A-2\di}{A-\di}}(\R^3)}&\lesssim&\delta_{j,k}^{-\frac{A\di}{3A-2\di}}\left|\det D_{j,k}\right|^{-\frac{A}{3A-2\di}}\\
&=&\delta_{j,k}^{-\frac{A\di}{3A-2\di}}(2^{j+k+k(\nu_2+sn)})^{\frac{A}{3A-2\di}}\\
&=&2^{j\frac{A\di+A}{3A-2\di}}2^{\frac{k}{3A-2\di}(A+A(\nu_2+sn)-A\di\frac{s}{r})}\\
&=&2^{j\frac{A(\di+1)}{3A-2\di}}2^{\frac{kA}{3A-2\di}(1+\frac{r+s}{r}\di-\di\frac{ s}{r})}\\
&=&2^{j\frac{A(\di+1)}{3A-2\di}}2^{k\frac{A(\di+1)}{3A-2\di}}.
\end{array}\end{equation*}
Clearly $\|\tilde{\mathcal{A}}_{j,k}\|_{L^{1}(\R^3)\to L^{1}(\R^3)}\lesssim1$. Simple computations show that for $\theta=\frac{3A-2\di}{A(\di+1)}$ we have

\[1-\theta+\theta\frac{2A-\di}{3A-2\di}=\frac{A+1}{A}\frac{\di}{\di+1}\]
and
\[1-\theta+\theta\frac{A-\di}{3A-2\di}=\frac{\di(A+1)-A}{A(\di+1)}.\]
Therefore, we only need to show that 
\[\Sum\limits_{j=0}^{\infty}2^{-j\left[1-A\theta\frac{(\di+1)}{3A-2\di}\right]}\Sum\limits_{0\leq k\leq\frac{jr}{s}}2^{-k\left[1-A\theta\frac{(1+\di)}{3A-2\di}\right]}\]converges for every $\theta<\frac{3A-2\di}{A(\di+1)}$, which is obvious.
\begin{remark}We notice that $\frac{3A-2\di}{A(\di+1)}\in[0,1]$. Because of the assumption $A>2\di$, we clearly have $\frac{3A-2\di}{A(\di+1)}>0$.
Furthermore,
\[\frac{3A-2\di}{A(\di+1)}\leq1\Longleftrightarrow \frac{2A}{A+2}\leq\di,\]
and the last inequality is obviously true if $\di\geq2$. On the other hand, if $\di<2$, we use Lemma \ref{distless2}. Observe that then in all cases $A=r$. In the case where $\varphi(y_1,y_2)=y_2(y_2^2+\lambda y_1^3)$, the assumption $A>2\di$ is not satisfied, because
\[A=r=3<\frac{18}{5}=2\di.\] In the other cases we always have for any $r>s\geq2$
\[\frac{2A}{A+2}=\frac{2r}{r+2}\leq\frac{rs}{r+s}=\di.\]
This gives $\frac{3A-2\di}{A(\di+1)}\leq 1$.
\end{remark}

\section{$N<d_h(\varphi) + 1/2$, $\max\{\nu_1,\nu_2\} < d_h(\varphi)$, $s=1$}
\label{s=1}

We first focus on the case where $w$ vanishes on the $y_1$-axis or the $y_2$-axis to the order $T$. This puts us within part (c) of Theorem \ref{main}. By a similar argument to the one in Section \ref{monarg} we see that if $w$ were a monomial then $T\ge 2d_h-2$. Hence if $T\le 2d_h-2$, then $h(w)=2d_h-2$ and the proof is simply the argument of Section \ref{dimline} and interpolation. Note that for $T\le 2d_h-2$, the line $\frac{1}{q}=\frac{2T+5}{T+3}\frac{1}{p}-1$ for instance is redundant since it lies below the line $\frac1q=\frac1p-\frac{1}{\di+1}$.

For $T>2d_h-2$, clearly $\nu_1=0$ or $\nu_2=0$ if the vanishing is along the $y_2$-axis or the $y_1$-axis, respectively. An example of this is the polynomial $\varphi(y_1,y_2) = y_1^5 +y_2y_1^3+\frac{9}{40} y_2^2y_1$, for which $T=2$ and $d_h=5/3$. The proof is almost identical to the previous section and we therefore omit the details. The only difference is that for the last step we require the corresponding double sum to be convergent for all $0 \le \theta < \frac{3T-2\di+6}{(T+2)(\di+1)}$ and we should check that $\frac{3T-2\di+6}{(T+2)(\di+1)} \le 1$. This is equivalent to 
\begin{equation} \label{check1}
0\le T(d_h-2) +4d_h-4,
\end{equation}
which is easy to see if $d_h \ge 2$. For $d_h<2$, we use Lemma \ref{d<2}. The polynomials that fall under part (a) of Lemma \ref{d<2} have $d_h = \frac{\nu_1+r}{r+1} \le\nu_1$ and therefore are not among those considered in this section. For the polynomials that fall under parts (b) and (c) of Lemma \ref{d<2} with $\nu_1=0$, we have $T=r-2$ and $d_h = \frac{2r}{r+1}$ and \eqref{check1} can be verified directly. Also, a direct computation shows that the polynomials that fall under part (b) with $\nu_1=1$ do not have Hessian determinants that vanish along the axes. Finally, the polynomials that fall under part (c) with $\nu_1=1$ have $T\le 2$ and $d_h\ge 5/3$ and so also satisfy \eqref{check1}.

We next focus on the analysis of the case, where $w$ vanishes along some curve $\mathcal{C}_\lambda=\{(y_1,\lambda y_1^r)\in\R^2: y_1\in\R\}$, $\lambda\in\R\setminus\{0\}$, to some order $T>2\di-2$ (if $T\le 2d_h-2$, the estimates follow by the usual interpolation arguments).
We need to show that the operator 
\[\scriptA_{\mathcal{C}_\lambda}f(x)=\int_{\R^2}f(x-\Phi(y))\psi(y)\eta\Big(\frac{y_2-\lambda y_1^r}{\varepsilon y_1^r}\Big)dy\]is bounded, for sufficiently small $\varepsilon>0$, on the line segments


\[\left(\left(\frac{(T+3)d_h}{(T+2)(d_h+1)},\frac{T(\di-1)+3\di-2}{(T+2)(\di+1)}\right),(1,1)\right],\] if $\lambda=\lambda_l$ for some $l\in\{1,\ldots,M\}$, in which case $n_l=1$, or  
\[\left(\left(\frac{T(2d_h-1)+6d_h}{2(T+4)(d_h+1)},\frac{T(2\di-3)+12\di-8}{2(T+4)(d_h+1)}\right),(1,1)\right],\] if $\lambda\not=\lambda_l$ for any $l\in\{1,\ldots,M\}$.
The point $\left(\frac{(T+3)d_h}{(T+2)(d_h+1)},\frac{T(\di-1)+3\di-2}{(T+2)(\di+1)}\right)$ is the intersection of the lines $\frac1q = \frac{2T+5}{(T+3)p}-1$ and $\frac1q = \frac1p - \frac{1}{d_h(\varphi)+1}$, whereas the point $\left(\frac{T(2d_h-1)+6d_h}{2(T+4)(d_h+1)},\frac{T(2\di-3)+12\di-8}{2(T+4)(d_h+1)}\right)$ is the intersection of the lines $\frac1q = \frac{5}{3p} - \frac{2T+12}{3T+12}$ and $\frac1q = \frac1p - \frac{1}{d_h(\varphi)+1}$.
Changing variables we obtain
\[\scriptA_{\mathcal{C}_\lambda}f(x)=\int_{\R^2}f(x-\Phi(y_1,y_2+\lambda y_1^r))\psi(y_1,y_2+\lambda y_1^r)\eta\Big(\frac{y_2}{\varepsilon y_1^r}\Big)dy.\]
Note that 
\[\Phi(y_1,y_2+\lambda y_1^r)=\left(y_1,y_2+\lambda y_1^r,y_1^{\nu_1}\left(y_2+\lambda y_1^r\right)^{\nu_2}\prod\limits_{i=1}^M(y_2-\tilde{\lambda}_iy_1^r)^{n_i}\right),\]where $\tilde{\lambda}_i=\lambda_i-\lambda\in\C$.
Then, for some integer $L=L(\lambda,r)$, we obtain using a bidyadic decomposition 
\begin{equation*}\renewcommand{\arraystretch}{1.8}\begin{array}{rcl}
\scriptA_{\mathcal{C}_\lambda}f(x)&\leq&\Sum\limits_{k=L}^{\infty}\Sum\limits_{j=0}^\infty\int_{\R^2}f(x-\Phi(y_1,y_2+\lambda y_1^r))\eta\Big(\frac{y_2}{\varepsilon y_1^r}\Big)\chi_j(y_1)\chi_k(y_2)dy\\
&\leq&\Sum\limits_{k=L}^{\infty}\Sum\limits_{ j\ll\frac{k}{r}}2^{-j-k}\int_{\R^2}f(x-\Phi(2^{-j}y_1,2^{-k}y_2+\lambda 2^{-jr}y_1^r))\chi\otimes\chi(y)dy.
\end{array}\end{equation*}
We treat the two cases separately.
 
\vspace{1em}
\noindent\textit{Case 1:} $\tilde{\lambda}_l=0$ for some $l\in\{1,\ldots,M\}$. This case is equivalent to $\lambda=\lambda_l$ for some $l\in\{1,\ldots,M\}$, and clearly $n_l=1$. We then have
\[\int_{\R^2}f(x-\Phi(2^{-j}y_1,2^{-k}y_2+\lambda 2^{-jr}y_1^r))\chi\otimes\chi(y)dy=\underbrace{\scriptA_{j,k}(f\circ D_{j,k})(D_{j,k}^{-1}x)}_{=:\tilde{\scriptA}_{j,k}f(x)},\]
where \[\scriptA_{j,k}f(x)=\int_{\R^2}f(x-(y_1,\delta_{j,k}y_2+\lambda y_1^r,\varphi_{j,k}(y)))\chi\otimes\chi(y)dy,\]
\[\varphi_{j,k}(y)=y_1^{\nu_1}y_2(\delta_{j,k}y_2+\lambda y_1^r)^{\nu_2}\prod\limits_{\genfrac{}{}{0pt}{}{i=1}{i\neq l}}^M(\delta_{j,k}y_2-\tilde{\lambda}_iy_1^r)^{n_i},\quad \delta_{j,k}=2^{-k+jr}\ll1,\]
and
\[D_{j,k}(z_1,z_2,z_3)=(2^{-j}z_1,2^{-jr}z_2,2^{-j\nu_1-jr\nu_2-jr(n-1)-k}z_3).\]
Using van der Corput estimates for oscillatory integrals as in Lemma \ref{decaylemma} (the only difference being the use of a mixed second derivative at the last step instead of a second derivative in the $y_2$ variable) it can be seen that 
\[ \|\scriptA_{j,k}\|_{L^{\frac32}(\R^3)\to L^{3}(\R^3)}\lesssim1.\] 
Using the weighted $L^{4/3}\to L^4$ convolution estimate of Gressman \cite{GressJGA} and arguing as in Section \ref{inter} we have 
\[\|\scriptA_{j,k}\|_{L^{\frac43}(\R^3)\to L^{4}(\R^3)}\lesssim\delta_{j,k}^{-\frac{T+2}{4}}.\] 
Interpolating between the last two estimates gives
\[\|\scriptA_{j,k}\|_{L^{\frac{3T-2\di+6}{2(T+2)-\di}}(\R^3)\to L^{\frac{3T-2\di+6}{T+2-\di}}(\R^3)}\lesssim \left(\delta_{j,k}^{-\frac{T+2}{4}}\right)^{\frac{4\di}{3T-2\di+6}}=\delta_{j,k}^{-\frac{(T+2)\di}{3T-2\di+6}}.\]
Using Lemma \ref{childrenlemma} we have
\begin{eqnarray*}
\|\tilde{\scriptA}_{j,k}\|_{L^{\frac{3T-2\di+6}{2(T+2)-\di}}(\R^3)\to L^{\frac{3T-2\di+6}{T+2-\di}}(\R^3)}
&\lesssim &\delta_{j,k}^{-\frac{(T+2)\di}{3T-2\di+6}} (\det D_{j,k})^{-\frac{T+2}{3T-2\di+6}}\\
&= &\delta_{j,k}^{-\frac{(T+2)\di}{3T-2\di+6}}2^{{\frac{(T+2)(j(r+1)+jr\nu_2+j\nu_1+jr(n-1)+k)}{3T-2\di+6}}} \\
&= &\delta_{j,k}^{-\frac{(T+2)\di}{3T-2\di+6}}2^{\frac{(j(\di r +\di +1)+k)(T+2)}{3T-2\di+6}}\\
&=& 2^{k\frac{(T+2)(\di+1)}{3T-2\di+6}}2^{j\frac{(T+2)(\di+1)}{3T-2\di+6}}. 
\end{eqnarray*}
Clearly $\|\tilde{\scriptA}_{j,k}\|_{L^{1}(\R^3)\to L^{1}(\R^3)}\lesssim1$. In order to have the desired result we observe that 
\[\Sum\limits_{k=0}^{\infty}\Sum\limits_{ 0\leq j \leq\frac{k}{r}} 2^{-k\left[1-\theta \frac{(T+2)(\di+1)}{3T-2\di+6}\right]}2^{-j\left[1-\theta \frac{(T+2)(\di+1)}{3T-2\di+6}\right]}<\infty
\]
for any $0\le \theta<\frac{3T-2\di+6}{(T+2)(\di+1)}\le 1$. Note that the last inequality is the same as \eqref{check1} which again is clear when $d_h \ge 2$. For $d_h<2$, we use Lemma \ref{d<2} and observe that the only part of that lemma that contains polynomials under consideration in this case is part (b) for $\nu_1=1$. In this case, considering the degree of the $y_2$ variable, one can see that $T\le 2$, $d_h \ge 5/3$ and \eqref{check1} can then be verified directly.

\vspace{1em}
\noindent\textit{Case 2:} $\tilde{\lambda}_l\not=0$ for any $l\in\{1,\ldots,M\}$. In this case we have $\lambda\neq\lambda_l$ for all $l\in\{1,\ldots,M\}$. An example where this occurs is $\varphi(y_1,y_2)=y_1(y_2+y_1^3)(y_2+ay_1^3)$, where $a=\frac{5+\sqrt{21}}{2}$.
Then $w(y)=-4(y_2-by_2^3)^2$, where $b=\frac{7+\sqrt{21}}{2}$. In this example $d_h=7/4$ and $2d_h-2<2=T$.

We have
\[\int_{\R^2}f(x-\Phi(2^{-j}y_1,2^{-k}y_2+\lambda 2^{-jr}y_1^r))\chi\otimes\chi(y)dy=\underbrace{\scriptA_{j,k}(f\circ D_{j,k})(D_{j,k}^{-1}x)}_{\tilde{\scriptA}_{j,k}f(x)}\]
where \[\scriptA_{j,k}f(x)=\int_{\R^2}f(x-(y_1,\delta_{j,k}y_2+\lambda y_1^r,\varphi_{j,k}(y)))\chi\otimes\chi(y)dy =: f * \mu_{j,k}(x),\]
\[\varphi_{j,k}(y)=y_1^{\nu_1}(\delta_{j,k}y_2+\lambda y_1^r)^{\nu_2}\prod\limits_{i=1}^M(\delta_{j,k}y_2-\tilde{\lambda}_iy_1^r)^{n_i},\quad \delta_{j,k}=2^{-k+jr}\ll1,\]
\[D_{j,k}(z_1,z_2,z_3)=(2^{-j}z_1,2^{-jr}z_2,2^{-j\nu_1-jr\nu_2-jrn}z_3).\]

Considering the sum of the absolute values of the second and third derivative in the $y_1$ variable and using van der Corput estimates of Bj\"ork type (see Lemma 1.6 in \cite{Domar}) for oscillatory integrals it is easily seen that
\[
|\hat{\mu_{j,k}}(\xi)|\lsm (1+|\xi|)^{-\frac{1}{3}},
\]
which implies that 
\[ \|\scriptA_{j,k}\|_{L^{\frac85}(\R^3)\to L^{\frac83}(\R^3)}\lesssim1.\] 
Using the weighted $L^{4/3}\to L^4$ convolution estimate of Gressman \cite{GressJGA} and arguing as in Section \ref{inter} we have 
\[\|\scriptA_{j,k}\|_{L^{\frac43}(\R^3)\to L^{4}(\R^3)}\lesssim\delta_{j,k}^{-\frac{T+4}{4}}.\] 
Interpolation between the last two estimates gives
\[\|\scriptA_{j,k}\|_{L^{\frac{8(T-2\di+3)}{5T-4\di+16}}(\R^3)\to L^{\frac{8(T-2\di+3)}{3T-4\di+8}}(\R^3)}\lesssim \delta_{j,k}^{-\frac{(T+4)(\di +1)}{4(T-\di+3)}}.\]
Then, using Lemma \ref{childrenlemma}, we have
\begin{eqnarray*}
\|\tilde{\scriptA}_{j,k}\|_{L^{\frac{8(T-2\di+3)}{5T-4\di+16}}(\R^3)\to L^{\frac{8(T-2\di+3)}{3T-4\di+8}}(\R^3)}
&\lesssim &\delta_{j,k}^{-\frac{(T+4)(\di +1)}{4(T-\di+3)}} (\det D_{j,k})^{-\frac{T+4}{4(T-\di+3)}}\\
&=&\delta_{j,k}^{-\frac{(T+4)(\di +1)}{4(T-\di+3)}} 2^{j(\di+1)(r+1)\frac{T+4}{4(T-\di+3)}}\\
&=& 2^{k\frac{(T+4)(\di +1)}{4(T-\di+3)}} 2^{j\frac{(\di+1)(T+4)}{4(T-\di+3)}}.
\end{eqnarray*}
Clearly $\|\tilde{\scriptA}_{j,k}\|_{L^{1}(\R^3)\to L^{1}(\R^3)}\lesssim1$. In order to have the desired result it is sufficient to observe that 
\[\Sum\limits_{k=0}^{\infty}\Sum\limits_{ 0\leq j \leq\frac{k}{r}} 2^{-k\left[1-\theta\frac{(T+4)(\di +1)}{4(T-\di+3)}\right]} 2^{-j\left[1-\theta\frac{(\di+1)(T+4)}{4(T-\di+3)}\right]} <\infty
\]
for any $0\le\theta<\frac{4(T-\di+3)}{(T+4)(\di+1)}\le 1$. Note that
\[
\frac{4(T-\di+3)}{(T+4)(\di+1)}\le 1 \Longleftrightarrow T(d_h-3) +8(d_h-1) \ge 0,
\]
which is clearly true if $d_h\ge 3$. For $d_h<3$, assume for a contradiction that $T>\frac{8(d_h-1)}{3-d_h}$. Then, since here $w$ cannot be a monomial,
\[
2d_h-2=d_h(w)\ge \frac{rT}{r+1} > \frac{8r(d_h-1)}{(r+1)(3-d_h)} \ge \frac{16(d_h-1)}{3(3-d_h)}.
\]
Thus
\[
1> \frac{8}{3(3-d_h)},
\]
i.e. $1> 3d_h$, which is false.



\section{A relation between the heights of a mixed-homogeneous polynomial and its Hessian determinant}
\begin{lemma}\label{hardcomputation}
Assume that $\varphi(y)=(y_2-\lambda y_1^r)^NQ(y)$, $\lambda\in\R\setminus\{0\}$, $r\ge 2$, $N\ge 2$ and $Q(y_1,\lambda y_1^r)\neq0$ for $y_1\neq0$. Then $w(y)=(y_2-\lambda y_1^r)^{2N-3}\tilde{Q}(y)$, where $\tilde{Q}(y_1,\lambda y_1^r)\neq0$ for $y_1\neq0$.
\end{lemma}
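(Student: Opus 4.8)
The plan is a direct computation organised around the polynomial $L(y):=y_2-\lambda y_1^{r}$, whose zero set is the curve $\mathcal C_\lambda=\{y_2=\lambda y_1^{r}\}$. First I would note that the polynomial map $(y_1,y_2)\mapsto(y_1,L(y))$ has a polynomial inverse, so every polynomial $p$ has a unique expansion $p=\sum_{m\ge0}c_m(y_1)L^m$; its order of vanishing along $\mathcal C_\lambda$ is the least $m$ with $c_m\not\equiv0$, and its restriction to $\mathcal C_\lambda$, as a polynomial in $y_1$, is obtained by setting $L=0$. Thus it suffices to show $L^{2N-3}\mid w$ and that the quotient $\tilde Q$ satisfies $\tilde Q(y_1,\lambda y_1^{r})\neq0$ for $y_1\neq0$.

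The main step is to differentiate $\varphi=L^{N}Q$ twice and extract enough of the $L$-expansion. Writing $L_1:=\partial_1L=-\lambda r y_1^{r-1}$, $L_{11}:=\partial_1^2L=-\lambda r(r-1)y_1^{r-2}$ and using $\partial_2L=1$, $\partial_2^2L=\partial_1\partial_2L=0$, a routine differentiation (using $N\ge2$, so that $L^{N-2}$ is a genuine polynomial factor) gives $\partial_1^2\varphi=L^{N-2}(a_0+La_1+\cdots)$, $\partial_2^2\varphi=L^{N-2}(b_0+Lb_1+\cdots)$, $\partial_1\partial_2\varphi=L^{N-2}(c_0+Lc_1+\cdots)$ with $a_0=N(N-1)L_1^2Q$, $b_0=N(N-1)Q$, $c_0=N(N-1)L_1Q$ and $a_1=N(L_{11}Q+2L_1Q_1)$, $b_1=2NQ_2$, $c_1=N(L_1Q_2+Q_1)$ (here $Q_i:=\partial_iQ$); higher coefficients will not be needed. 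Consequently
\[
w=\partial_1^2\varphi\,\partial_2^2\varphi-(\partial_1\partial_2\varphi)^2
=L^{2N-4}\big[(a_0b_0-c_0^2)+L(a_0b_1+a_1b_0-2c_0c_1)+L^2(\cdots)\big].
\]

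The argument then rests on two algebraic checks. First, $a_0b_0=c_0^2=N^2(N-1)^2L_1^2Q^2$, so the coefficient of $L^{2N-4}$ vanishes and $L^{2N-3}\mid w$. Second, in the coefficient of $L^{2N-3}$ the $Q_1$- and $Q_2$-terms cancel:
\[
a_0b_1+a_1b_0-2c_0c_1=N^2(N-1)Q\big(2L_1^2Q_2+L_{11}Q+2L_1Q_1-2L_1^2Q_2-2L_1Q_1\big)=N^2(N-1)L_{11}Q^2 .
\]
Hence $w=L^{2N-3}\tilde Q$ with $\tilde Q\equiv N^2(N-1)L_{11}Q^2\pmod L$, so $\tilde Q(y_1,\lambda y_1^{r})=-N^2(N-1)\lambda r(r-1)\,y_1^{r-2}\,Q(y_1,\lambda y_1^{r})^2$, and since $N\ge2$, $\lambda\neq0$, $r\ge2$ and $Q(y_1,\lambda y_1^{r})\neq0$ for $y_1\neq0$ by hypothesis, every factor on the right is nonzero for $y_1\neq0$, as required. (For $Q\equiv1$, $N=2$ this recovers the formula for $w$ recorded in Section~\ref{inter}.)

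I do not foresee a genuine obstacle. The only thing needing care is the bookkeeping of the $L$-expansions: one must retain exactly $a_0,a_1,b_0,b_1,c_0,c_1$, exploit the cancellation $a_0b_0=c_0^2$ to pass from divisibility by $L^{2N-4}$ to divisibility by $L^{2N-3}$ before reading off the quotient's leading coefficient, and then verify the cancellation of the $Q_1,Q_2$ contributions that makes the order exactly $2N-3$ with a manifestly nonvanishing leading term.
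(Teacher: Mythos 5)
Your proposal is correct and follows essentially the same route as the paper: compute the three second derivatives of $(y_2-\lambda y_1^r)^NQ$, pull out the factor $(y_2-\lambda y_1^r)^{2N-4}$, observe that the constant term in $L=y_2-\lambda y_1^r$ cancels since $a_0b_0=c_0^2$, and identify the coefficient of the next power of $L$ as $-N^2(N-1)\lambda r(r-1)y_1^{r-2}Q^2$, which is nonzero on the curve for $y_1\neq0$. Your bookkeeping via the $L$-expansion with coefficients $a_i,b_i,c_i$ is just a tidier notation for the paper's multilinearity argument, and your cancellation computation agrees with the paper's.
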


\begin{proof}
Simple computations show that 
\begin{eqnarray*}
\partial_1^2[(y_2-\lambda y_1^r)^N Q(y)] & = & (y_2-\lambda y_1^r)^{N-2}[N(N-1)\lambda^2 r^2 y_1^{2r-2} Q(y) \\
&& - N\lambda r(r-1)(y_2-\lambda y_1^r) y_1^{r-2} Q(y) \\
&& - 2N\lambda r (y_2-\lambda y_1^r) y_1^{r-1} \partial_1 Q(y) \\
&& + (y_2-\lambda y_1^r)^2 \partial_1^2 Q(y)],
\end{eqnarray*}
\begin{eqnarray*}
\partial_1\partial_2 [(y_2-\lambda y_1^r)^N Q(y)] & = & (y_2-\lambda y_1^r)^{N-2} [-N(N-1) \lambda r y_1^{r-1} Q(y) \\
&& +N (y_2-\lambda y_1^r) \partial_1 Q(y) \\
&& -N\lambda r (y_2-\lambda y_1^r) y_1^{r-1} \partial_2 Q(y) \\
&& + (y_2-\lambda y_1^r)^2 \partial_1\partial_2 Q(y)],
\end{eqnarray*}
and
\begin{eqnarray*}
\partial_2^2 [(y_2-\lambda y_1^r)^N Q(y)] & = & (y_2-\lambda y_1^r)^{N-2} [N(N-1) Q(y) \\
&& + 2N(y_2-\lambda y_1^r) \partial_2 Q(y) \\
&& + (y_2-\lambda y_1^r)^2 \partial_2^2 Q(y)].
\end{eqnarray*}
We may clearly factor out $(y_2-\lambda y_1^r)^{2N-4}$ from the determinant defining $w$, therefore it has to be shown that exactly one more factor $(y_2-\lambda y_1^r)$ can be factored out of the remaining determinant. Using the multilinearity of the remaining determinant, we observe that the term which is constant in the factor $(y_2-\lambda y_1^r)$ is
\[
\left|\begin{array}{cc} N(N-1) \lambda^2 r^2 y_1^{2r-2} Q(y) & -N(N-1) \lambda r y_1^{r-1} Q(y) \\
-N(N-1) \lambda r y_1^{r-1} Q(y) & N(N-1) Q(y) \end{array} \right|,
\]
which is equal to $0$. We next consider the terms which contain a single factor in $(y_2-\lambda y_1^r)$. These are
\begin{align*}
& (y_2-\lambda y_1^r) \left|\begin{array}{cc} -N\lambda r(r-1) Q(y) - 2N\lambda r y_1^{r-1} \partial_1 Q(y) & -N(N-1) \lambda r y_1^{r-1} Q(y) \\
N\partial_1 Q(y) - N\lambda r y_1^{r-1} \partial_2 Q(y) & N(N-1) Q(y) \end{array}\right| \\
& + (y_2-\lambda y_1^r) \left|\begin{array}{cc} N(N-1) \lambda^2 r^2 y_1^{2r-2} Q(y) & N\partial_1 Q(y) - N\lambda r y_1^{r-1} \partial_2 Q(y) \\
-N(N-1) \lambda r y_1^{r-1} Q(y) & 2N \partial_2 Q(y) \end{array}\right| \\
= \ \ & N^2(y_2-\lambda y_1^r) [-\lambda r(r-1)(N-1) y_1^{r-2} Q(y)^2 - 2\lambda r(N-1)y_1^{r-1} \partial_1 Q(y) Q(y) \\
& +(N-1)\lambda r y_1^{r-1} \partial_1 Q(y) Q(y) - \lambda^2 r^2 (N-1) y_1^{2r-2} \partial_2 Q(y) Q(y) \\
& + 2(N-1) \lambda^2 r^2 y_1^{2r-2} \partial_2 Q(y) Q(y) + \lambda r (N-1) y_1^{r-1} \partial_1 Q(y) Q(y) \\
& - \lambda^2 r^2 (N-1) y_1^{2r-2} \partial_2 Q(y) Q(y)] \\
= \ \ & - N^2 (N-1) \lambda r(r-1) (y_2-\lambda y_1^r) y_1^{r-2} Q(y)^2.
\end{align*}
Under the assumptions of the lemma, the last expression is a nonzero multiple of $(y_2-\lambda y_1^r)$. Note that in the homogeneous case $r=1$, the expression is equal to zero and one may factor out an additional power of $(y_2-\lambda y_1^r)$ as in \cite{FGU06}. The rest of the terms are of higher order in $(y_2-\lambda y_1^r)$ and so the proof of Lemma \ref{hardcomputation} is complete.
\end{proof}

The above lemma, together with Lemma \ref{computationsforw} and the discussion in Section \ref{s>1}, allows us to deduce a relation between the height of $w$ in terms of the height of $\varphi$ or in terms of the Taylor support of $\varphi$ in certain cases. For $\varphi$ a mixed homogenous (and in particular not a homogeneous) polynomial, if $N \ge d_h(\varphi) + 1/2$, then $h(w)=2N-3=2\h-3$. If $N < d_h(\varphi) +1/2$ and $\max\{\nu_1,\nu_2\}\ge d_h(\varphi)$, then $\h=\nu_i$ for $i\in\{1,2\}$ and $h(w)=2\nu_i -2=2\h -2$. If $N < d_h(\varphi) +1/2$, $s\ge 2$ and $\max\{\nu_1,\nu_2\}< d_h(\varphi)$, then $h(w)=2d_h(\varphi) -2 = 2\h -2$, unless the conditions of \ref{A>2d} hold in which case $h(w)=A-2$.

\end{document}